\setlist[enumerate,1]{label={(\roman*)}}
\numberwithin{equation}{section}
\theoremstyle{definition}
\newtheorem{defn}{Definition}[section]
\newtheorem{example}[defn]{Example}
\theoremstyle{remark}
\newtheorem{rmk}[defn]{Remark}
\theoremstyle{plain}
\newtheorem{lem}[defn]{Lemma}
\newtheorem{prop}[defn]{Proposition}
\newtheorem{thm}[defn]{Theorem}
\newcommand{\Z}{\mathrm{Z}}
\newcommand{\X}{\mathrm{X}}
\newcommand{\T}{\mathrm{T}}
\DeclareMathAlphabet{\bb}{U}{bbold}{m}{n}
\DeclareMathOperator{\id}{id}
\DeclareMathOperator{\supp}{supp}
\newcommand{\nat}{\mathbb{N}}
\newcommand{\real}{\mathbb{R}}
\DeclareMathOperator{\pr}{pr}
\DeclareMathOperator{\cat}{cat}
\DeclareMathOperator{\loc}{loc}
\newcommand{\orbifold}[1]{\mathscr{#1}}
\DeclareMathOperator{\reg}{reg}
\DeclareMathOperator{\sing}{sing}
\newcommand{\rad}{\mathrm{rad}}
\newcommand{\B}{\mathrm{B}}
\DeclareMathOperator{\length}{length}
\DeclareMathOperator{\dist}{dist}
\newcommand{\metric}[1]{\mathfrak{#1}}
\DeclareMathOperator{\Cm}{Cm}
\DeclareMathOperator{\cm}{cm}
\DeclareMathOperator{\grad}{grad}
\DeclareMathOperator{\inj}{inj}
\DeclareMathOperator{\GL}{GL}
\DeclareMathOperator{\CC}{CC}
\DeclareMathOperator{\CF}{CF}
\title[Solutions to a Nonlinear Problem on an Orbifold]{Multiplicity of Solutions to a Nonlinear Elliptic Problem on a Riemannian Orbifold}
\author{Gustavo de Paula Ramos}
\address{Instituto de Matem\'atica e Estat\'istica\\
Universidade de S\~ao Paulo\\
Rua do Mat\~ao, 1010\\
05508-090\\
S\~ao Paulo, SP\\
Brazil}
\email{gustavopramos@gmail.com, gpramos@ime.usp.br}
\urladdr{http://gpramos.com}
\begin{document}

\begin{abstract}
We employ the photography method to obtain a lower bound for the number of solutions to a nonlinear elliptic problem on a Riemannian orbifold in function of the Lusternik--Schnirelmann category of its submanifold of points with largest local group.

\smallskip
\noindent \textbf{Keywords.} Nonlinear elliptic equation, Riemannian orbifold, Lusternik--Schnirelmann category
\end{abstract}

\date{\today}
\maketitle

\section{Introduction}
\subsection*{Main result}\label{Context}

Let $(\orbifold{O},\metric{g})$ be an $n$-dimensional orientable compact connected Riemannian orbifold, where $n\geq 3$. We are interested in the multiplicity of weak solutions to the subcritical nonlinear problem
\begin{equation}\label{Introduction:Eqn:Main}\tag{$P_\epsilon$}
	\begin{cases}
		-\epsilon^2\Delta^{(\orbifold{O},\metric{g})}u+u=u|u|^{p-2}~\text{and}\\
		u>0
	\end{cases}
\end{equation}
when $\epsilon\in]0,1[$ is sufficiently small, where $2<p<2n/(n-2)$.

It is already known (\cite[Theorem A]{Benci2007}) that the analogous problem on a (not necessarily orientable) compact connected Riemannian manifold $M$ admits at least $\cat(M)+1$ different non-constant solutions, where $\cat$ denotes the Lusternik--Schnirelmann category (see Definition \ref{PhotographyMethod:Defn}).

Inspired by this theorem, our main result is that we can estimate the number of solutions to \eqref{Introduction:Eqn:Main} with respect to the topology of
\[
	\Z^{\orbifold{O}}:=\left\{
		x\in\orbifold{O}: |\Gamma^{\orbifold{O}}(x)|=\max_{y\in\orbifold{O}}|\Gamma^{\orbifold{O}}(y)|=:\zeta^{\orbifold{O}}
	\right\},
\]
where $\Gamma^\orbifold{O}(x)$ denotes the local group of $x\in\orbifold{O}$ (see Definition \ref{Suborbifolds:Defn:LocalGroup}). We remark that this estimate is intimately related to the canonical way which one integrates on $(\orbifold{O},\metric{g})$ (see Definition \ref{RiemannianStructures:Defn:Radon}). Let us state our main result more precisely.

\begin{thm}\label{Introduction:Theorem:A}
If $\epsilon\in ]0,1[$ is sufficiently small, then \eqref{Introduction:Eqn:Main} admits at least $\cat(\Z^{\orbifold{O}})+1$ different non-constant weak solutions.
\end{thm}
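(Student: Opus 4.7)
The plan is to adapt the photography method of Benci--Cerami to the orbifold setting. First I would recast \eqref{Introduction:Eqn:Main} variationally: define the energy functional
\[
	I_\epsilon(u):=\frac{1}{2}\int_{\orbifold{O}}\bigl(\epsilon^2|\grad u|^2+u^2\bigr)\,d\mu_{\metric{g}}-\frac{1}{p}\int_{\orbifold{O}}|u|^p\,d\mu_{\metric{g}}
\]
on $H^1(\orbifold{O},\metric{g})$, where the integration is the canonical orbifold integration alluded to in Definition \ref{RiemannianStructures:Defn:Radon}, and restrict to the Nehari manifold $\mathcal{N}_\epsilon$. Weak solutions to \eqref{Introduction:Eqn:Main} correspond (after a sign/maximum-principle argument to ensure positivity) to positive critical points of $I_\epsilon|_{\mathcal{N}_\epsilon}$. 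One then verifies compactness of Palais--Smale sequences below a suitable threshold $c_\infty+\delta$, where $c_\infty$ is the mountain-pass level of the limiting problem on $\mathbb{R}^n$.

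Next I would construct the photography. For each $x\in\Z^\orbifold{O}$, use a uniformizing chart around $x$ (whose local group has maximal order $\zeta^\orbifold{O}$) to transplant a rescaled ground state $U$ of the limit problem $-\Delta U+U=U^{p-1}$ on $\mathbb{R}^n$, producing a test function concentrated at $x$ and symmetric under $\Gamma^\orbifold{O}(x)$; after projection onto $\mathcal{N}_\epsilon$ this defines a continuous map $\Phi_\epsilon\colon \Z^\orbifold{O}\to\mathcal{N}_\epsilon$. The orbifold integration weighting by $1/|\Gamma^\orbifold{O}(\cdot)|$ forces that $I_\epsilon(\Phi_\epsilon(x))\to c_\infty/\zeta^\orbifold{O}$ uniformly in $x\in\Z^\orbifold{O}$, so for small $\epsilon$ the image lands in the sublevel $\mathcal{N}_\epsilon^{c_\infty/\zeta^\orbifold{O}+\delta}$. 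Simultaneously I would define a barycenter map $\beta_\epsilon\colon\mathcal{N}_\epsilon^{c_\infty/\zeta^\orbifold{O}+\delta}\to\orbifold{O}_r$, where $\orbifold{O}_r$ is a tubular thickening of $\Z^\orbifold{O}$ realized inside an ambient Euclidean embedding (so that "center of mass" makes sense).

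I would then show that $\beta_\epsilon\circ\Phi_\epsilon$ is homotopic to the inclusion $\Z^\orbifold{O}\hookrightarrow\orbifold{O}_r$; the homotopy invariance of category plus the usual nerve-trick give
\[
	\cat_{\mathcal{N}_\epsilon^{c_\infty/\zeta^\orbifold{O}+\delta}}\bigl(\mathcal{N}_\epsilon^{c_\infty/\zeta^\orbifold{O}+\delta}\bigr)\geq \cat(\Z^\orbifold{O}).
\]
Applying Lusternik--Schnirelmann theory on the $C^{1,1}$ Nehari manifold (even) yields at least $\cat(\Z^\orbifold{O})$ critical points of $I_\epsilon|_{\mathcal{N}_\epsilon}$ inside this sublevel, hence nonconstant positive solutions, and a standard mountain-pass argument produces one further solution at a higher level, for a total of $\cat(\Z^\orbifold{O})+1$.

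\textbf{Main obstacle.} The core difficulty is showing that concentration onto the stratum of maximal isotropy is \emph{energetically favorable and detectable by barycenters}: one must rule out that low-energy functions on $\mathcal{N}_\epsilon$ concentrate near points $y\notin\Z^\orbifold{O}$, since concentration at such a $y$ would produce energy at least $c_\infty/|\Gamma^\orbifold{O}(y)|>c_\infty/\zeta^\orbifold{O}$, conflicting the sublevel constraint only if the orbifold Radon measure is used correctly. A Lions-type concentration-compactness argument on orbifolds, controlling what happens near singular strata of smaller local group, is where the bulk of the work lies, together with establishing that the barycenter of a nearly-concentrated function lies in a controlled neighborhood of $\Z^\orbifold{O}$ so that the retraction onto $\Z^\orbifold{O}$ is well-defined.
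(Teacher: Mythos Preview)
Your outline is correct and follows the same photography-method strategy as the paper: transplant rescaled ground states at points of $\Z^{\orbifold{O}}$ to define $\Phi_\epsilon$, prove a concentration result forcing low-energy Nehari functions to be localized near $\Z^{\orbifold{O}}$ (you correctly identify this as the crux, and the paper's Lemma~\ref{Concentration:Lemma:2} does exactly this via the energy gap $m(E)/|\Gamma^{\orbifold{O}}(y)|>m(E)/\zeta^{\orbifold{O}}$ for $y\notin\Z^{\orbifold{O}}$), and then map back. The one genuine methodological difference is your barycenter: you propose the classical Benci--Cerami extrinsic barycenter via an ambient Euclidean embedding, whereas the paper works intrinsically, developing a notion of \emph{weak convexity} on Riemannian orbifolds (strongly convex neighborhoods fail at singular points) in order to define a Riemannian center of mass directly on $(\orbifold{O},\metric{g})$, following Petean rather than Benci--Cerami. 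Your route is shorter if one is willing to fix a topological embedding of $\X^{\orbifold{O}}$ and compute barycenters against the orbifold Radon measure; the paper's route avoids any extrinsic choice and produces geometry of independent interest. A minor point: the extra $(+1)$ solution in the paper comes not from a separate mountain-pass argument but from the abstract fact (Theorem~\ref{PhotographyMethod:Theorem}) that $\mathcal{N}_\epsilon$ is contractible while $\cat(\Lambda_{\epsilon,m(J_\epsilon)+\delta})\geq\cat(\Z^{\orbifold{O}})>1$, forcing a critical point above the sublevel.
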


As the orbifold $\orbifold{O}$ is a manifold precisely when $\Z^{\orbifold{O}}=\orbifold{O}$, Theorem \ref{Introduction:Theorem:A} implies the weaker form of \cite[Theorem A]{Benci2007} which only accounts for orientable manifolds. In fact, the orientability of $\orbifold{O}$ is only used to employ Sobolev embeddings and thus the result may be easily extended to non-orientable orbifolds if the Sobolev embeddings are proved to hold in this situation.

\subsection*{Variational framework}

If $1\leq q<\infty$, then we define the Lebesgue space $L^q(\orbifold{O},\metric{g})$ as the completion of $C^\infty(\orbifold{O})$ with respect to the norm
\[
	\|u\|_{L^q(\orbifold{O},\metric{g})}:=
	\left[
		\int_\orbifold{O} |u|^q\mathrm{d}\mu^{(\orbifold{O},\metric{g})}
	\right]^{1/q},
\]
where we define the space of real-valued orbifold maps $C^\infty(\orbifold{O})$ on Section \ref{Orbifolds}. Analogously, the Sobolev space $H^1(\orbifold{O},\metric{g})$ is defined as the Hilbert space obtained as completion of $C^\infty(\orbifold{O})$ with respect to the inner product
\[
	\langle u,v\rangle_{H^1(\orbifold{O},\metric{g})}:=
	\int_\orbifold{O}
		\metric{g}(\grad^{(\orbifold{O},\metric{g})}(u),\grad^{(\orbifold{O},\metric{g})}(v))+uv
	\mathrm{d}\mu^{(\orbifold{O},\metric{g})}.
\]

The Sobolev embeddings are known to hold in this context (see \cite[Theorem 2.3]{Farsi2001}), so we define a \emph{weak solution} to \eqref{Introduction:Eqn:Main} as being a $u\in H^1(\orbifold{O},\metric{g})$ such that
\begin{multline*}
	\text{given}~v\in C^\infty(\orbifold{O}),
	\\
	\epsilon^2\langle
		u,v
	\rangle_{H^1(\orbifold{O},\metric{g})}
	+
	(1-\epsilon^2)\langle
		u,v
	\rangle_{L^2(\orbifold{O},\metric{g})}
	+
	\langle
		i^*(u(u^+)^{p-2}),v
	\rangle_{H^1(\orbifold{O},\metric{g})}
	=
	0,
\end{multline*}
where $i\colon H^1(\orbifold{O},\metric{g})\to L^p(\orbifold{O},\metric{g})$ is a Sobolev embedding. As usual in the literature, we will adopt the abuse of language of writing the condition above as
\begin{equation}\label{Introduction:WeakSolution:1}
	\text{given}~v\in C^\infty(\orbifold{O}),~
	\int_{\orbifold{O}}
		\epsilon^2\metric{g}(\grad^{(\orbifold{O},\metric{g})}(u),\grad^{(\orbifold{O},\metric{g})}(v))+uv-u(u^+)^{p-2}v
	\mathrm{d}\mu^{(\orbifold{O},\metric{g})}=0.
\end{equation}
The divergence theorem holds in our context (see \cite[p. 320-321]{Chiang1990}), so our notion of weak solutions is akin to the respective concept on Riemannian manifolds (without boundary). In fact, we have a well-defined functional $J_\epsilon\colon H^1(\orbifold{O},\metric{g})\to\real$ given by
\begin{equation}\label{Intro:Eqn:Functional}
	J_\epsilon(u):=\frac{1}{\epsilon^n}\int_\orbifold{O}\frac{\epsilon^2}{2}\metric{g}(\grad^{(\orbifold{O},\metric{g})}(u),\grad^{(\orbifold{O},\metric{g})}(u))+\frac{u^2}{2}-\frac{(u^+)^p}{p}\mathrm{d}\mu^{(\orbifold{O},\metric{g})}
\end{equation}
which can be routinely shown to be in $C^2(H^1(\orbifold{O},\metric{g}))$. It is therefore possible to deduce the variational characterization that $u\in H^1(\orbifold{O},\metric{g})$ satisfies \eqref{Introduction:WeakSolution:1} if, and only if, $\mathrm{d}_uJ_\epsilon=0$.

We can also use the Sobolev embeddings to prove that
\[
	m(J_\epsilon):=\inf_{\mathcal{N}_\epsilon}J_\epsilon>0,
\]
where
\[
	\mathcal{N}_\epsilon:=\{u\in H^1(\orbifold{O},\metric{g})\setminus\{0\}: \mathrm{d}_uJ_\epsilon(u)=0\}
\]
is the \emph{Nehari manifold} associated to $J_\epsilon$, which is a natural constraint of $J_\epsilon$ (see \cite[Section 6.4]{Ambrosetti2007}). We also remark that the Rellich--Kondrakov theorem holds in this context (see \cite[Theorem 2.4]{Farsi2001}), so $\left.J_\epsilon\right|_{\mathcal{N}_\epsilon}$ satisfies the Palais--Smale condition. To finish, we fix the notation for the (possibly empty) intersection between $\mathcal{N}_\epsilon$ and the sublevels of $J_\epsilon$,
\[
	\Lambda_{\epsilon,\alpha}:=\{
		u\in\mathcal{N}_\epsilon: J_\epsilon(u)<\alpha
	\}
\]
for any $\alpha>0$.

\subsection*{Sketching the proof of Theorem \ref{Introduction:Theorem:A}}\label{PhotographyMethod}
We begin by recalling the definition of the Lusternik--Schnirelmann category.
\begin{defn}\label{PhotographyMethod:Defn}
Let $X$ be a topological space. If $\emptyset\neq A\subset X$, then we define the \emph{Lusternik--Schnirelmann category} of $A$ as a subset of $X$ as
\begin{multline*}
	\cat_X(A)=\min\left\{
		n\in\nat: A~\text{is contained in the union of}
	\right.
	\\
	\left.
		n~\text{contractible closed subsets of}~X
	\right\}
\end{multline*}
and we set $\cat_X(\emptyset)=0$. Moreover, we define $\cat(X):=\cat_X(X)$.
\end{defn}

The following well-known result shows that the Lusternik--Schnirelmann category may be used to estimate the multiplicity of critical points.

\begin{thm}[{\cite[Theorem 3.3]{Benci2007}}]\label{PhotographyMethod:Theorem}
Let $\mathcal{N}$ be a $C^{1,1}$-Banach manifold and let $J\in C^1(\mathcal{N})$. If $J$ is bounded below and satisfies the Palais--Smale condition, then it admits at least $\cat(J^d)$ critical points in $J^d:=\{u\in\mathcal{N}:J(u)<d\}$. Moreover, if $\mathcal{N}$ is contractible and $\cat(J^d)>1$, then $J$ has at least one critical point outside of $J^d$.
\end{thm}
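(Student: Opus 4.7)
The approach would follow the classical Lusternik--Schnirelmann minimax framework adapted to an infinite-dimensional $C^{1,1}$-Banach manifold. For each $k \in \{1, \ldots, \cat(J^d)\}$, I would introduce the minimax levels
\[
c_k := \inf\bigl\{\sup_{u\in A} J(u) : A \subset \mathcal{N}~\text{closed},~\cat_{\mathcal{N}}(A) \geq k\bigr\}.
\]
Boundedness below of $J$ gives $c_1 > -\infty$, while testing against closed sublevels of the form $\{J \leq d - \delta\}$ shows that $c_k \leq c_{\cat(J^d)} < d$, so every $c_k$ already lies inside $J^d$.

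The core tool would be the deformation lemma. Using that $J$ is $C^1$ and $\mathcal{N}$ is $C^{1,1}$, I would construct a locally Lipschitz pseudo-gradient vector field for $J$ and integrate it to a flow $\eta_s \colon \mathcal{N} \to \mathcal{N}$; the Palais--Smale condition ensures that sublevels can be pushed strictly below any regular level. Standard category-theoretic arguments then show that each $c_k$ is a critical value of $J$ and that whenever consecutive minimax levels coincide, $c_k = \cdots = c_{k+j}$, the critical set at that common level has Lusternik--Schnirelmann category at least $j+1$ in $\mathcal{N}$, forcing it to contain at least $j+1$ critical points. Summing across the distinct levels produces at least $\cat(J^d)$ critical points of $J$ with values in $[c_1, c_{\cat(J^d)}] \subset \{J < d\}$, which proves the first claim.

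For the \emph{moreover} part, assume $\mathcal{N}$ is contractible and $\cat(J^d) > 1$, and suppose by contradiction that no critical point of $J$ lies outside $J^d$. Using boundedness below, the Palais--Smale condition and completeness of the pseudo-gradient flow, every trajectory $s \mapsto \eta_s(u)$ must eventually enter $J^d$, and a continuous choice of such entry times yields a strong deformation retraction of $\mathcal{N}$ onto $J^d$; this forces $J^d$ to inherit the contractibility of $\mathcal{N}$, contradicting $\cat(J^d) > 1$. I expect the principal technical obstacle to be precisely the construction and global behaviour of this flow: since $J$ is merely $C^1$, one cannot use a gradient directly but must work with a pseudo-gradient vector field on the $C^{1,1}$ manifold $\mathcal{N}$, and establishing its completeness together with uniform descent on noncritical strips involves classical but delicate arguments in the spirit of Palais and Clark.
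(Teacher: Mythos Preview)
The paper does not prove this theorem; it is simply quoted from \cite[Theorem~3.3]{Benci2007} as a known tool, so there is no in-paper proof to compare against. Your outline is the standard Lusternik--Schnirelmann argument and is essentially correct.

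One genuine slip, however: in defining the minimax levels you require $\cat_{\mathcal{N}}(A)\geq k$. Since for $A\subset J^d\subset\mathcal{N}$ one always has $\cat_{\mathcal{N}}(A)\leq\cat_{J^d}(A)$, your scheme produces at most $\cat_{\mathcal{N}}(J^d)$ critical points, which can be strictly smaller than $\cat(J^d)=\cat_{J^d}(J^d)$; in particular, when $\mathcal{N}$ is contractible your $c_k$ is $+\infty$ for every $k\geq 2$, and the first part of the argument collapses precisely in the situation where the ``moreover'' clause matters. The remedy is to run the entire minimax inside the open sublevel $J^d$: the negative pseudo-gradient flow decreases $J$ and hence leaves $J^d$ forward-invariant, so one takes $A\subset J^d$ closed in $J^d$ with $\cat_{J^d}(A)\geq k$ and argues exactly as you describe. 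With this correction both the first claim and your deformation-retract argument for the additional critical point go through.
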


In the context of the previous theorem, the \emph{photography method} is a technique (thoroughly described in \cite[Section 4]{Benci2022}) that allows us to estimate the Lusternik--Schnirelmann category of $J^d$ when $\mathcal{N}$ is a submanifold of a Sobolev space.

Back to our problem, we can actually argue as in \cite[Section 6]{Benci2007} to prove that $\mathcal{N}_\epsilon$ is contractible, so we only have to prove that $\cat(\Lambda_{\epsilon,m(J_\epsilon)+\delta})\geq\cat(\Z^{\orbifold{O}})$ to deduce Theorem \ref{Introduction:Theorem:A} from Theorem \ref{PhotographyMethod:Theorem}. The lemma that follows is the key result which we will use to obtain such an estimate.
\begin{lem}[{\cite[Remark 2.2]{Benci2007}}]
Let $X,Y$ be topological spaces and let $f\colon X\to Y$, $g\colon Y\to X$ be continuous maps such that $g\circ f\simeq\id_X$. We conclude that $\cat(X)\leq\cat(Y)$.
\end{lem}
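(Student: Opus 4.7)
The plan is to pull a categorical cover of $Y$ back through $f$ to obtain a categorical cover of $X$. Concretely, I would suppose $\cat(Y)=n$ and let $A_1,\ldots,A_n$ be closed subsets of $Y$ covering $Y$, each contractible in $Y$. Setting $B_i:=f^{-1}(A_i)\subset X$, continuity of $f$ makes each $B_i$ closed in $X$, and since $Y=\bigcup_i A_i\supset f(X)$, the sets $B_1,\ldots,B_n$ cover $X$.

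The only nontrivial step will be to verify that each $B_i$ is contractible in $X$. For this, I would take a homotopy $H_i\colon A_i\times[0,1]\to Y$ with $H_i(\cdot,0)$ equal to the inclusion $A_i\hookrightarrow Y$ and $H_i(\cdot,1)\equiv y_i$ for some $y_i\in Y$, together with a homotopy $K\colon X\times[0,1]\to X$ realizing the hypothesis $g\circ f\simeq\id_X$, with $K(\cdot,0)=\id_X$ and $K(\cdot,1)=g\circ f$. Restricting $K$ to $B_i\times[0,1]$ yields a homotopy from the inclusion $B_i\hookrightarrow X$ to $g\circ f|_{B_i}$, while $g\circ H_i\circ(f|_{B_i}\times\id_{[0,1]})$ is a homotopy from $g\circ f|_{B_i}$ to the constant map at $g(y_i)\in X$. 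Concatenating these two homotopies shows that the inclusion $B_i\hookrightarrow X$ is null-homotopic.

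Therefore $\{B_1,\ldots,B_n\}$ witnesses $\cat(X)\leq n=\cat(Y)$. I do not foresee any substantial obstacle: the argument is the standard functoriality of Lusternik--Schnirelmann category under homotopy domination, and the essential move is the two-step concatenation above, using $g$ to transport a contraction from $Y$ back into $X$ and then bridging the gap with the hypothesized homotopy $g\circ f\simeq\id_X$.
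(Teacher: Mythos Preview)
Your argument is correct and is precisely the standard proof that Lusternik--Schnirelmann category is monotone under homotopy domination. The paper itself does not prove this lemma; it is merely quoted from \cite[Remark 2.2]{Benci2007}, so there is nothing to compare against beyond noting that your proposal supplies the expected details (and that the case $\cat(Y)=\infty$ is trivial).
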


The goal of the paper then becomes proving the following result.

\begin{thm}\label{Introduction:Theorem:ToProve}
If $\epsilon\in]0,1[$ is sufficiently small, then there exists $\delta>0$ for which we can construct continuous maps $\Z^{\orbifold{O}}\to\Lambda_{\epsilon,m(J_\epsilon)+\delta}$ and $\Lambda_{\epsilon,m(J_\epsilon)+\delta}\to\orbifold{O}$ whose composition is homotopic to $\id_{\Z^{\orbifold{O}}}$.
\end{thm}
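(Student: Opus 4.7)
My plan is the classical photography method of Benci--Cerami, adapted to the orbifold setting. Let $U\in H^1(\real^n)$ denote the unique positive radial ground state of $-\Delta u+u=u^{p-1}$ on $\real^n$ and write $m_\infty$ for its Nehari energy. The initial step is to prove the asymptotics $m(J_\epsilon)\to m_\infty/\zeta^{\orbifold{O}}$ as $\epsilon\to 0^+$, which encodes the principle that least-energy concentration must happen at points of maximal isotropy: on an orbifold chart at $x$ the Riemannian measure assigns a local volume equal to $1/|\Gamma^{\orbifold{O}}(x)|$ times the Euclidean volume, so larger isotropy groups produce strictly smaller transplanted energies.

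To construct the photography map $\Phi_\epsilon\colon\Z^{\orbifold{O}}\to\mathcal{N}_\epsilon$, at each $x\in\Z^{\orbifold{O}}$ I choose an orbifold chart based on normal coordinates at $x$, so that $\Gamma^{\orbifold{O}}(x)$ acts on $T_x\orbifold{O}$ by linear isometries. Because $U$ is radial, $U_\epsilon(v):=U(v/\epsilon)$ is automatically $\Gamma^{\orbifold{O}}(x)$-invariant and descends to the chart. Multiplying by a smooth cutoff supported inside the orbifold injectivity radius and dilating by the unique positive scalar that places the result on $\mathcal{N}_\epsilon$ yields $\Phi_\epsilon(x)$ continuously in $x$. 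A change of variables, together with the orbifold integration identity $\int_{\tilde{V}/\Gamma} f \,\mathrm{d}\mu^{\orbifold{O}}=\frac{1}{|\Gamma|}\int_{\tilde{V}}(f\circ\pi)\,\mathrm{d}\tilde{\mu}$, shows that $J_\epsilon(\Phi_\epsilon(x))= m_\infty/\zeta^{\orbifold{O}}+o(1)$ uniformly in $x\in\Z^{\orbifold{O}}$; combined with the asymptotics of $m(J_\epsilon)$, this places $\Phi_\epsilon$ into $\Lambda_{\epsilon,m(J_\epsilon)+\delta}$ for a suitable choice of $\delta$ and small $\epsilon$.

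For the barycenter, fix an embedding $\iota\colon\orbifold{O}\to\real^N$ of the underlying topological space with a tubular neighbourhood and retraction $r_{\orbifold{O}}\colon\tub(\iota(\orbifold{O}))\to\iota(\orbifold{O})$, and set
\[
    \beta_\epsilon(u):=\iota^{-1}\circ r_{\orbifold{O}}\left(\frac{\int_{\orbifold{O}}\iota(\cdot)(u^+)^p\,\mathrm{d}\mu^{(\orbifold{O},\metric{g})}}{\int_{\orbifold{O}}(u^+)^p\,\mathrm{d}\mu^{(\orbifold{O},\metric{g})}}\right).
\]
The core technical step is to show that, for $\delta,\epsilon$ small, every $u\in\Lambda_{\epsilon,m(J_\epsilon)+\delta}$ has its center of mass inside $\tub(\iota(\orbifold{O}))$, and moreover $\beta_\epsilon(u)\in\tub(\Z^{\orbifold{O}})$, where the latter denotes a tubular neighbourhood of $\Z^{\orbifold{O}}$ inside $\orbifold{O}$. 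This rests on an orbifold concentration-compactness argument: if $\epsilon_k\to 0$ and $u_k\in\mathcal{N}_{\epsilon_k}$ satisfies $J_{\epsilon_k}(u_k)\to m_\infty/\zeta^{\orbifold{O}}$, then suitable rescalings of $u_k$ about a concentration point converge in $H^1_\loc$ to a positive solution of $-\Delta V+V=V^{p-1}$ on $\real^n/H$ for some finite $H<\mathrm{O}(n)$, and matching the energy forces $|H|=\zeta^{\orbifold{O}}$, hence the concentration point lies in $\Z^{\orbifold{O}}$.

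Finally, the same family of estimates shows $\beta_\epsilon(\Phi_\epsilon(x))\to x$ uniformly on $\Z^{\orbifold{O}}$, so for $\epsilon$ small the image of $\beta_\epsilon\circ\Phi_\epsilon$ lies inside a fixed $\tub(\Z^{\orbifold{O}})$; transporting the straight-line homotopy in $\real^N$ back via $r_{\orbifold{O}}$ and post-composing with the retraction $\tub(\Z^{\orbifold{O}})\to\Z^{\orbifold{O}}$ produces the required homotopy to $\id_{\Z^{\orbifold{O}}}$. The main obstacle I anticipate is the orbifold concentration-compactness in the previous paragraph: one must work out the profile decomposition on quotients $\real^n/\Gamma$ rather than on $\real^n$, identify the candidate limit energies $m_\infty/|\Gamma|$ for each isotropy type, and rule out splitting or concentration at points with strictly smaller local groups. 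This is the essential step in which the orbifold setting genuinely differs from the manifold case treated in \cite{Benci2007}.
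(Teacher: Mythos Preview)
Your overall strategy coincides with the paper's: transplant the radial ground state at points of $\Z^{\orbifold{O}}$ via normal charts to build the photography map, prove a concentration result forcing low-energy Nehari functions to concentrate near $\Z^{\orbifold{O}}$, and close the loop with a tubular-neighbourhood retraction. The concentration mechanism you sketch (limit profile on $\real^n/H$ with Nehari energy $m_\infty/|H|$, so matching the level forces $|H|=\zeta^{\orbifold{O}}$) is exactly the content of the paper's Lemma~\ref{Concentration:Lemma:2}.

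The genuine divergence is the barycenter. The paper does \emph{not} embed $\orbifold{O}$ in $\real^N$; it develops an intrinsic \emph{Riemannian center of mass} on the orbifold (Proposition~\ref{CenterOfMass:Proposition}), which in turn requires introducing a notion of \emph{weak convexity} for small metric balls (strong convexity fails at singular points) and verifying that Karcher's estimates survive. The map $c_\epsilon(u)=\Cm_\eta((u^+)^p)$ then lands directly in $\B^{(\orbifold{O},\metric{g})}(\Z^{\orbifold{O}},3\rho)$, and the homotopy to $\id_{\Z^{\orbifold{O}}}$ is produced via the normal exponential map over $\Z^{\orbifold{O}}$ (Lemma~\ref{ConvenientRadius:Lemma:Homeomorphism}), without ever leaving $\orbifold{O}$.

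Your extrinsic Euclidean barycenter is more elementary and sidesteps the weak-convexity machinery entirely, which is a real economy. However, you should be explicit that the underlying space $\X^{\orbifold{O}}$ is in general \emph{not} a manifold, so the existence of the retraction $r_{\orbifold{O}}$ is not the smooth tubular neighbourhood theorem: you need that $\X^{\orbifold{O}}$ is an ANR (true, since quotients $\real^n/G$ by finite groups are locally contractible and finite-dimensional). The retraction $\tub(\Z^{\orbifold{O}})\to\Z^{\orbifold{O}}$ inside $\orbifold{O}$ likewise needs justification; the cleanest source is exactly the normal-bundle exponential homeomorphism of Lemma~\ref{ConvenientRadius:Lemma:Homeomorphism}, which you may as well invoke. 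With those two points made precise, your route is a valid alternative to the paper's intrinsic one.
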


\subsection*{Organization of the paper}

In Section \ref{Orbifolds}, we develop background material on orbifolds by introducing basic definitions, fixing notation and recalling relevant results. Similarly to Section \ref{Orbifolds}, most of Section \ref{RiemannianOrbifolds} is dedicated to basic material on Riemannian orbifolds yet we finish this section by introducing the concept of \emph{weak convexity} and proving that the \emph{Riemannian center of mass} may be considered in this context. In Section \ref{Preliminaries}, we develop the technical preliminaries to prove Theorem \ref{Introduction:Theorem:ToProve} in Section \ref{ProofA}.

\subsection*{Notation and terminology}

If $(X,\dist)$ is a metric space or has a canonically induced metric space structure, as in the case of inner product spaces or Riemannian manifolds, then we define
\[
	\B^{(X,\dist)}(Y,r)=\left\{x\in X:\inf_{y\in Y}\dist(x,y)<r\right\}
\]
for any $Y\subset X$ and $r\in]0,\infty[$. If $(M,\metric{g})$ is a Riemannian manifold, then we say that $A\subset M$ is \emph{strongly convex} on $(M,\metric{g})$ when given $x,y\in A$,
\begin{enumerate}
\item
	up to parametrization, there exists a unique minimizing geodesic on $(M,\metric{g})$ which links $x$ to $y$ and
\item
	if $\gamma\colon [0,1]\to (M,\metric{g})$ is a minimizing geodesic which links $x$ to $y$, then $\gamma([0,1])\subset A$.
\end{enumerate}

\subsection*{Acknowledgments}

This study was financed in part by the Coordenação de Aperfeiçoamento de Pessoal de Nível Superior - Brasil (CAPES) - Finance Code 001. More precisely, by CAPES grants 88887.614697/2021-00 and 88887.715990/2022-00. The author expresses his gratitude to Francisco Caramello and André Magalhães de Sá Gomes for very helpful conversations about (Riemannian) orbifolds.

\section{Orbifolds}\label{Orbifolds}

In this section, we present (smooth) orbifolds by following the classical approach via local charts and atlases, similarly as in \cite{Caramello2019,Alfonso2021}.

\subsection*{The orbifold structure}

Let $X$ be a topological space.

\begin{defn}
Given $x\in X,$ we call a triple $(\widetilde{U},G,\phi)$ an $n$-dimensional \emph{orbifold chart} around $x$ on $X$ when
\begin{enumerate}
\item
	$\widetilde{U}$ is a connected open subset of $\real^n,$ called the \emph{domain} of the chart;
\item
	$G$ is a finite group acting smoothly and effectively on $\widetilde{U}$, called the \emph{structural group} of the chart;
\item \label{Orbifolds:Defn:Invariant}
	the \emph{chart map} $\phi\colon\widetilde{U}\to X$ is a continuous $G$-invariant map which induces a homeomorphism $\widetilde{U}/G\to\phi(\widetilde{U})$ and
\item
	$\phi^{-1}(x)$ is unitary.
\end{enumerate}
\end{defn}

Suppose that $(\widetilde{U},G,\phi)$ is an orbifold chart on $X$. In this situation, we say that $U:=\phi(\widetilde{U})\subset X$ is a \textit{fundamental open subset} of $X$
and $\Phi_g\colon\widetilde{U}\to\widetilde{U}$ will denote the diffeomorphism induced by the action of $g\in G$.

\begin{defn}
Suppose that $(\widetilde{U},G,\phi), (\widetilde{V},H,\psi)$ are $n$-dimensional orbifold charts on $X$. If $\lambda\colon \widetilde{U}\to \widetilde{V}$ is a smooth embedding and $\psi\circ\lambda=\phi$, then $\lambda$ is said to be an \emph{orbifold embedding} of $(\widetilde{U},G,\phi)$ into $(\widetilde{V},H,\psi)$.
\end{defn}

We introduce the concept of orbifold atlases.
\begin{defn}
The set $\mathcal{A}=\{(\widetilde{U_i},G_i,\phi_i)\}_{i\in I}$ is said to be an $n$-dimensional \emph{orbifold atlas} of $X$ when
\begin{enumerate}
\item
	given $i\in I$, $(\widetilde{U_i},G_i,\phi_i)$ is an $n$-dimensional orbifold chart on $X$;
\item
	$\{U_i\}_{i\in I}$ is an open cover of $X$;
\item
	if $i,j\in I$ are such that $U_i\subset U_j$, then there exists an orbifold embedding of $(\widetilde{U_i},G_i,\phi_i)$ into $(\widetilde{U_j},G_j,\phi_j)$ and
\item
	given $x\in X$ and $i,j\in I$ for which $x \in U_i\cap U_j$, there exists $k\in I$ such that $U_k\subset U_i\cap U_j$.
\end{enumerate}
\end{defn}

If $\mathcal{A}_1, \mathcal{A}_2$ are $n$-dimensional orbifold atlases of $X$, then they are said to be \emph{compatible} when $\mathcal{A}_1\cup\mathcal{A}_2$ is contained in an $n$-dimensional orbifold atlas of $X$.
\begin{defn}
We say that $\widetilde{\mathcal{A}}$ is an $n$-dimensional \emph{orbifold structure} of $X$ when
\begin{enumerate}
\item
	$\widetilde{\mathcal{A}}$ is an $n$-dimensional orbifold atlas of $X$ and
\item
	if $\mathcal{A}$ is an $n$-dimensional orbifold atlas of $X$ and $\mathcal{A}$ is compatible with $\widetilde{\mathcal{A}}$, then $\mathcal{A}\subset\widetilde{\mathcal{A}}$.
\end{enumerate}
\end{defn}

Finally, let us define the notion of an orbifold.
\begin{defn}
The pair $\orbifold{O}=(\X^{\orbifold{O}},\mathcal{A}^\orbifold{O})$ is said to be an $n$-dimensional \emph{orbifold} when $\X^{\orbifold{O}}$ is a paracompact Hausdorff space and $\mathcal{A}^{\orbifold{O}}$ is an $n$-dimensional orbifold structure of $\X^{\orbifold{O}}$.
\end{defn}

Let $\orbifold{O}$ be an orbifold. In this context,
\begin{itemize}
\item
	$\mathcal{B}$ is said to be an \emph{orbifold atlas} of $\orbifold{O}$ when it is an orbifold atlas of $\X^\orbifold{O}$ and $\mathcal{B}\subset\mathcal{A}^{\orbifold{O}}$;
\item
	$(\widetilde{U},G,\phi)$ is said to be an \emph{orbifold chart} on $\orbifold{O}$ when $(\widetilde{U},G,\phi)\in\mathcal{A}^{\orbifold{O}}$ and
\item
	we write $x\in\orbifold{O}$ (resp., $U\subset\orbifold{O}$) to mean that $x\in \X^{\orbifold{O}}$ (resp., $U\subset \X^{\orbifold{O}}$).
\end{itemize}

We finish this section by introducing the notion of orientability of an orbifold.
\begin{defn}
The orbifold $\orbifold{O}$ is said to be \emph{orientable} when we can fix $\mathcal{B}$, an orbifold atlas of $\orbifold{O}$ such that
\begin{enumerate}
\item
	if $(\widetilde{U},G,\phi)\in\mathcal{B}$, then $G$ acts on $\widetilde{U}$ by orientation-preserving diffeomorphisms and
\item
	the orbifolds embeddings between charts of $\mathcal{B}$ are orientation-preserving.
\end{enumerate}	
\end{defn}

\subsection*{Orbifold maps}

Let $\orbifold{O},\orbifold{P}$ be orbifolds and let $f\colon \X^\orbifold{O}\to \X^\orbifold{P}$ be a continuous map.

\begin{defn}\label{OrbifoldMaps:Defn:Lift}
Given $x\in\orbifold{O}$, a smooth function $\widetilde{f_x}\colon\widetilde{U_x}\to\widetilde{V_{f(x)}}$ is said to be a \emph{local lift} of $f$ at $x$ when
\begin{enumerate}
\item
	$(\widetilde{U_x},G_x,\phi_x)$ is an orbifold chart around $x$ on $\orbifold{O}$;
\item
	$(\widetilde{V_{f(x)}},H_{f(x)},\psi_{f(x)})$ is an orbifold chart around $f(x)$ on $\orbifold{P}$;
\item
	$f(U_x)\subset V_{f(x)}$;
\item
	$\psi_{f(x)}\circ\widetilde{f_x}=f\circ\phi_x$ and
\item
	given $g\in G_x$, there exists $h_g\in H_{f(x)}$ such that $\widetilde{f_x}\circ\Phi_g=\Phi_{h_g}\circ\widetilde{f_x}$.
\end{enumerate}
\end{defn}

There exists a notion of isomorphism between the local lifts of $f$ at an $x\in\orbifold{O}$.

\begin{defn}
Fix $x\in\orbifold{O}$ and suppose that $\widetilde{f^1_x}\colon\widetilde{U^1_x}\to\widetilde{V^1_{f(x)}}$, $\widetilde{f^2_x}\colon\widetilde{U^2_x}\to\widetilde{V^2_{f(x)}}$ are local lifts of $f$ at $x$. We say that $\widetilde{f^1_x}$ and $\widetilde{f^2_x}$ are isomorphic when there exist smooth diffeomorphisms $\widetilde{\psi^\orbifold{O}}\colon\widetilde{U^1_x}\to\widetilde{U^2_x}$, $\widetilde{\psi^\orbifold{P}}\colon\widetilde{V^1_{f(x)}}\to\widetilde{V^2_{f(x)}}$ such that $\widetilde{\psi^\orbifold{P}}\circ\widetilde{f^1_x}=\widetilde{f^2_x}\circ\widetilde{\psi^\orbifold{O}}$.
\end{defn}

This notion of isomorphism between local lifts induces a concept of equivalence of local lifts.

\begin{defn}
Suppose that $\widetilde{f^1}\colon\widetilde{U^1}\to\widetilde{V^1}$, $\widetilde{f^2}\colon\widetilde{U^2}\to\widetilde{V^2}$ are local lifts of $f$. Given $x\in U^1\cap U^2$, we say that $\widetilde{f^1}$ is equivalent to $\widetilde{f^2}$ at $x$, denoted as $\widetilde{f^1}\sim_x\widetilde{f^2}$, when there exists $(\widetilde{U^3},G^3,\phi^3)$, an orbifold chart around $x$ on $\orbifold{O}$ such that $U^3\subset U^1\cap U^2$ and the lifts $\widetilde{f^1},\widetilde{f^2}$ induce isomorphic lifts on $\widetilde{U^3}$.
\end{defn}

Let $\orbifold{C}^\infty(\orbifold{O},\orbifold{P})$ be the set of elements $(f,\{\widetilde{f_x}\}_{x\in\orbifold{O}})$ where $f\colon\X^{\orbifold{O}}\to\X^{\orbifold{P}}$ is continuous and given $x\in\orbifold{O}$, $\widetilde{f_x}$ is a local lift of $f$ at $x$. We introduce an equivalence relation at $\orbifold{C}^\infty(\orbifold{O},\orbifold{P})$ by setting $(f^1,\{\widetilde{f^1_x}\}_{x\in\orbifold{O}})\sim(f^2,\{\widetilde{f^2_x}\}_{x\in\orbifold{O}})$ when given $x\in\orbifold{O}$, $\widetilde{f^1_y}\sim_x\widetilde{f^2_z}$ for any $y,z\in\orbifold{O}$.

\begin{defn}
We say that $\orbifold{f}$ is an \emph{orbifold map} from $\orbifold{O}$ to $\orbifold{P}$, denoted as $\orbifold{f}\colon\orbifold{O}\to\orbifold{P}$, when $\orbifold{f}\in C^\infty(\orbifold{O},\orbifold{P}):=\orbifold{C}^\infty(\orbifold{O},\orbifold{P})/{\sim}$.
\end{defn}

As we are particularly interested in real-valued orbifold maps, we set $C^\infty(\orbifold{O})=C^\infty(\orbifold{O},\real)$.

\subsection*{Orbibundles and the tangent orbibundle}\label{Orbibundles}
Let $\orbifold{P}$ be an \emph{orbibundle} over the orbifold $\orbifold{O}$, i.e.,
\begin{enumerate}
\item
	$\orbifold{P}$ is an orbifold;
\item \label{Orbibundles:Trivialization}
	given $x\in\orbifold{O}$, there exist
	\begin{enumerate}
	\item
		$(\widetilde{U_x},G_x,\phi_x)$ which is an orbifold chart around $x$ on $\orbifold{O}$;
	\item
		$(\widetilde{V_x},G_x,\psi_x)$ which is an orbifold chart on $\orbifold{P}$;
	\item
		a smooth bundle map $\widetilde{\pi_x}\colon\widetilde{V_x}\to\widetilde{U_x}$ and
	\end{enumerate}
\item \label{Orbibundles:Embeddings}
	if
	\begin{enumerate}
	\item
		$(\widetilde{U_1},G_1,\phi_1)$ and $(\widetilde{U_2},G_2,\phi_2)$ are orbifold charts on $\orbifold{O}$;
	\item
		$(\widetilde{V_1},G_1,\psi_1)$ and $(\widetilde{V_2},G_2,\psi_2)$ are orbifold charts on $\orbifold{P}$;
	\item
		$\widetilde{\pi_1}\colon\widetilde{V_1}\to\widetilde{U_1}$ and $\widetilde{\pi_2}\colon\widetilde{V_2}\to\widetilde{U_2}$ are smooth bundle maps and
	\item
		$\lambda^{\orbifold{O}}\colon\widetilde{U_1}\to\widetilde{U_2}$ is an orbifold embedding of $(\widetilde{U_1},G_1,\phi_1)$ into $(\widetilde{U_2},G_2,\phi_2)$,
	\end{enumerate}
	then there exists $\lambda^{\orbifold{P}}\colon\widetilde{V_1}\to\widetilde{V_2}$, an orbifold embedding of $(\widetilde{V_1},H_1,\psi_1)$ into $(\widetilde{V_2},H_2,\psi_2)$ such that $\widetilde{\pi_2}\circ\lambda^{\orbifold{P}}=\lambda^{\orbifold{O}}\circ\widetilde{\pi_1}$.
\end{enumerate}

The orbibundle $\orbifold{P}$ is said to be a \emph{cone orbibundle} of rank $k$ when the bundle maps in \ref{Orbibundles:Trivialization} define vector bundles of rank $k$ and the mappings in \ref{Orbibundles:Embeddings} are vector bundle morphisms. Furthermore, the orbibundle structure on $\orbifold{P}$ canonically induces the unique surjective orbifold map $\pi^{\orbifold{P}}\colon\orbifold{P}\to\orbifold{O}$ such that given $x\in\orbifold{O}$ and the respective information in \ref{Orbibundles:Trivialization}, it holds that $\pi^{\orbifold{P}}\circ\psi_x=\phi_x\circ\widetilde{\pi_x}$. We call $\sigma$ an \emph{orbisection} of $\orbifold{P}$ when $\sigma\in C^\infty(\orbifold{O},\orbifold{P})$ and $\pi^{\orbifold{P}}\circ\sigma=\id^{\orbifold{O}}$. To finish the discussion about orbibundles, we define the \emph{fiber} of $\orbifold{P}$ over $x\in\orbifold{O}$ as the set $\orbifold{P}_x:=(\pi^{\orbifold{P}})^{-1}(x)\subset\orbifold{P}$ and given $U\subset\orbifold{O}$, we define $\left.\orbifold{P}\right|_U=(\pi^{\orbifold{P}})^{-1}(U)$.

Let us construct the simplest and probably most important cone orbibundle over $\orbifold{O}$, the tangent orbibundle $\T\orbifold{O}$. Suppose that $\orbifold{O}$ is $n$-dimensional and let $\mathcal{B}=\{(\widetilde{U_i},G_i,\phi_i)\}_{i\in I}$ be an orbifold atlas of $\orbifold{O}$. Given $i\in I$, let $\widetilde{\pi_i}=\pr_1\colon \widetilde{U_i}\times\real^n\to\widetilde{U_i}$; consider the action of $G_i$ on $\widetilde{V_i}:=\widetilde{U_i}\times\real^n$ given by
\[
	g(\widetilde{x},v)
	:=
	(g\widetilde{x},\mathrm{d}_{\widetilde{x}}\Phi_g(v))\in\widetilde{U_i}\times\real^n
\]
for every $(g,\widetilde{x},v)\in G_i\times\widetilde{U_i}\times\real^n$ and define the quotient topological space $Y_i=\widetilde{V_i}/G_i$. Now, define the product topological space $Y^{\T\orbifold{O}}=\prod_{i\in I}(\{i\}\times Y_i)$. Given $i,j\in I$; $(\widetilde{x_i},v_i)\in\widetilde{V_i}$ and $(\widetilde{x_j},v_j)\in\widetilde{V_j}$, we set $(i, G_i(\widetilde{x_i},v_i))\sim (j, G_j(\widetilde{x_j},v_j))$ precisely when
\begin{enumerate}
\item
	$\phi_i(\widetilde{x_i})=\phi_j(\widetilde{x_j})=:y\in U_i\cap U_j$;
\item
	we can fix $\lambda\colon\widetilde{U_i}\to\widetilde{U_j}$, an orbifold embedding of $(\widetilde{U_i},G_i,\phi_i)$ into $(\widetilde{U_j},G_j,\phi_j)$ such that $\mathrm{d}_{\widetilde{x_i}}\lambda(v_i)=v_j$.
\end{enumerate}
We set the quotient topological space $\X^{\T\orbifold{O}}=Y^{\T\orbifold{O}}/{\sim}$. The last step to obtain the tangent orbibundle $\T\orbifold{O}$ consists of defining an orbifold atlas for $\X^{\T\orbifold{O}}$. Let $\mathrm{d}\phi_i\colon\widetilde{V_i}\to \X^{\T\orbifold{O}}$ be defined as $\mathrm{d}\phi_i(\widetilde{x},v)=[i,G_i(\widetilde{x},v)]$ for any $(\widetilde{x},v)\in\widetilde{V_i}$ and $i\in I$.
\begin{defn}
The \emph{tangent orbibundle} of $\orbifold{O}$ is the cone orbibundle of rank $n$ over $\orbifold{O}$ given by $\T\orbifold{O}=(\X^{\T\orbifold{O}},\mathcal{A}^{\T\orbifold{O}})$, where $\mathcal{A}^{\T\orbifold{O}}$ is generated by the orbifold atlas
$
	\{(\widetilde{V_i},G_i,\mathrm{d}\phi_i)\}_{i\in I}.
$
\end{defn}

\subsection*{Suborbifolds and the canonical stratification}\label{Suborbifolds}
Let $\orbifold{O}$ be an $n$-dimensional orbifold. We begin by introducing a particular kind of orbifold chart on $\orbifold{O}$.
\begin{lem}
Given $x\in\orbifold{O}$, there exists a \emph{linear chart} centered at $x$ on $\orbifold{O}$, i.e., an $(\real^n,G,\phi)$ which is an orbifold chart around $x$ on $\orbifold{O}$ such that $\phi^{-1}(x)=0$ and $G$ is a subgroup of $\GL(\real^n)$ acting canonically on $\real^n$.
\end{lem}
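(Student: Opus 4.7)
\medskip

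The plan is to start from an arbitrary orbifold chart around $x$, linearize the action of the structural group using Bochner's technique, and then extend the domain to all of $\real^n$ by a radial equivariant diffeomorphism.

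More precisely, first I would pick any orbifold chart $(\widetilde{U},G,\phi)$ around $x$ on $\orbifold{O}$ and let $\widetilde{x}\in\widetilde{U}$ be the unique preimage of $x$. Since $\phi$ is $G$-invariant and $\phi^{-1}(x)=\{\widetilde{x}\}$, we have $\Phi_g(\widetilde{x})=\widetilde{x}$ for every $g\in G$, so $\widetilde{x}$ is a common fixed point of the diffeomorphisms $\{\Phi_g\}_{g\in G}$. Averaging any auxiliary Riemannian metric on $\widetilde{U}$ over the finite group $G$ produces a $G$-invariant Riemannian metric $\metric{h}$ on (a $G$-invariant neighborhood of $\widetilde{x}$ in) $\widetilde{U}$. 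The associated exponential map $\exp_{\widetilde{x}}$ is then a local diffeomorphism from a neighborhood of $0$ in $T_{\widetilde{x}}\widetilde{U}$ onto a neighborhood of $\widetilde{x}$, and $\metric{h}$-isometries that fix $\widetilde{x}$ commute with $\exp_{\widetilde{x}}$, so $\exp_{\widetilde{x}}$ intertwines the linear action $g\mapsto \mathrm{d}_{\widetilde{x}}\Phi_g$ on $T_{\widetilde{x}}\widetilde{U}$ with the original action on $\widetilde{U}$.

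Next, I would fix a linear identification $T_{\widetilde{x}}\widetilde{U}\cong\real^n$; under it, the linearized action turns $G$ into a finite subgroup of $\GL(\real^n)$ (actually of the orthogonal group of the inner product induced by $\metric{h}_{\widetilde{x}}$). Because the linear action preserves norms, every sufficiently small open ball $B_r(0)\subset\real^n$ is $G$-invariant, and I may take $r$ small enough that $\exp_{\widetilde{x}}\colon B_r(0)\to\widetilde{U}$ is an embedding onto a connected $G$-invariant open neighborhood $\widetilde{W}$ of $\widetilde{x}$ in $\widetilde{U}$. The composition $\psi:=\phi\circ\exp_{\widetilde{x}}\colon B_r(0)\to\X^\orbifold{O}$ is then a $G$-invariant continuous map inducing a homeomorphism $B_r(0)/G\to\psi(B_r(0))$, so $(B_r(0),G,\psi)$ is an orbifold chart around $x$ whose structural group acts linearly.

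Finally, to enlarge the domain to all of $\real^n$, I would pick a smooth radial equivariant diffeomorphism $\tau\colon\real^n\to B_r(0)$ such as $\tau(v)=rv/\sqrt{1+|v|^2}$; since $\tau$ is radial and the $G$-action is orthogonal, $\tau$ is $G$-equivariant, and hence $(\real^n,G,\psi\circ\tau)$ is the desired linear chart centered at $x$. Compatibility with the orbifold structure $\mathcal{A}^\orbifold{O}$ follows because $\exp_{\widetilde{x}}\circ\tau$ is an orbifold embedding of this new chart into the original $(\widetilde{U},G,\phi)$. The main technical point is the Bochner-type linearization in the first two paragraphs; everything afterward is formal manipulation with equivariant diffeomorphisms.
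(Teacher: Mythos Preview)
Your proposal is correct and follows essentially the same strategy as the paper's proof: start from an arbitrary chart, take a $G$-invariant Riemannian metric, use the exponential map at the fixed point $\widetilde{x}$ to linearize the $G$-action (Bochner's trick), and then compose with a radial diffeomorphism $\real^n\to B_r(0)$ to extend the domain to all of $\real^n$. The only cosmetic difference is the particular radial map chosen; just be sure your linear identification $T_{\widetilde{x}}\widetilde{U}\cong\real^n$ is an $\metric{h}_{\widetilde{x}}$-isometry so that $G$ lands in the \emph{Euclidean} orthogonal group and your $\tau(v)=rv/\sqrt{1+|v|^2}$ is genuinely $G$-equivariant.
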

\begin{proof}
Let $(\widetilde{V},H,\psi)$ be an orbifold chart around $x$ on $\orbifold{O}$ and set $\widetilde{x}=\phi^{-1}(x).$ Let $\widetilde{\metric{g}}$ be a $G$-equivariant Riemannian metric on $\widetilde{V}$ and fix $\epsilon>0$ such that 
\[
	\real^n\supset B^{(\real^n,\widetilde{\metric{g}}_{\widetilde{x}})}(0,\epsilon)\ni u \mapsto \exp^{(\widetilde{V},\widetilde{\metric{g}})}_{\widetilde{x}}(u)\in B^{(\widetilde{V},\widetilde{\metric{g}})}(\widetilde{x},\epsilon)\subset\widetilde{V}
\]
is a smooth diffeomorphism. In particular, we can define the following smooth diffeomorphism:
\[
	\real^n \ni u\mapsto \eta(u):=\exp^{(\widetilde{V},\widetilde{\metric{g}})}_{\widetilde{x}}\left(
		\frac{\epsilon|u|^2}{1+|u|^2}u
	\right)\in B^{(\widetilde{V},\widetilde{\metric{g}})}(\widetilde{x},\epsilon)\subset\widetilde{V}.
\]
The Riemannian metric $\widetilde{\metric{g}}$ is $G$-equivariant, so
\[
	\exp^{(\widetilde{V},\widetilde{\metric{g}})}_{\widetilde{x}}\circ\mathrm{d}_{\widetilde{x}}\Phi_g=\Phi_g\circ\exp^{(\widetilde{V},\widetilde{\metric{g}})}_{\widetilde{x}}
\]
for any $g\in G$. Finally, $(\real^n,G,\phi:=\psi\circ\eta)$ is an orbifold chart which satisfies the conclusion of the lemma, where $G:=\{\eta^*\mathrm{d}_{\widetilde{x}}\Phi_h: h \in H\}\subset \GL(\real^n)$.
\end{proof}

Having introduced the concept of linear charts, we can define what we mean by embedded suborbifolds of $\orbifold{O}$.

\begin{defn}
We say that $\orbifold{P}$ is a $k$-dimensional \emph{embedded suborbifold} of $\orbifold{O}$ when
\begin{enumerate}
\item
$\orbifold{P}$ is a $k$-dimensional orbifold;
\item
$\X^\orbifold{P}$ is a topological subspace of $\X^\orbifold{O};$
\item
if $x\in\orbifold{P}$, then there exists $(\real^n,G,\phi)$, a linear chart centered at $x$ on $\orbifold{O}$ that is \emph{adapted} to $\orbifold{P}$, i.e., $(\real^k,H,\psi:=\phi\circ\pr_1)$ is an orbifold chart on $\orbifold{P}$ and $\psi(\real^k)=\phi(\real^k\times\{0_{\real^{n-k}}\})=\phi(\real^n)\cap\orbifold{P}$, where $H$ is a certain subgroup of $G$.
\end{enumerate}
\end{defn}

Now, let us present the canonical stratification of $\orbifold{O}$. Recall the following simple result.

\begin{lem}[{\cite[Lemma 2.10]{Moerdijk2003}}]\label{Orbifolds:Lem:Monomorphism}
If $(\widetilde{U},G,\phi)$ is an orbifold chart around $x$ on $\orbifold{O}$, then $G\ni g \mapsto \mathrm{d}_{\widetilde{x}}\Phi_g \in \GL(\real^n)$ is a monomorphism, where $\widetilde{x}:=\phi^{-1}(x)$.
\end{lem}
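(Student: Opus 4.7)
The plan is to verify the homomorphism property first and then establish injectivity via a standard Bochner-type linearization argument.

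For the homomorphism property, I would first note that the point $\widetilde{x}=\phi^{-1}(x)$ is fixed by every element of $G$: indeed, since $\phi$ is $G$-invariant, $\phi(\Phi_g(\widetilde{x}))=\phi(\widetilde{x})=x$, so $\Phi_g(\widetilde{x})\in\phi^{-1}(x)=\{\widetilde{x}\}$. Consequently, for any $g,h\in G$, the chain rule applied to $\Phi_{gh}=\Phi_g\circ\Phi_h$ at $\widetilde{x}$ collapses to
\[
\mathrm{d}_{\widetilde{x}}\Phi_{gh}=\mathrm{d}_{\Phi_h(\widetilde{x})}\Phi_g\circ\mathrm{d}_{\widetilde{x}}\Phi_h=\mathrm{d}_{\widetilde{x}}\Phi_g\circ\mathrm{d}_{\widetilde{x}}\Phi_h,
\]
so the assignment $g\mapsto\mathrm{d}_{\widetilde{x}}\Phi_g$ is a group homomorphism into $\GL(\real^n)$.

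For injectivity, suppose $g\in G$ satisfies $\mathrm{d}_{\widetilde{x}}\Phi_g=\id$. By averaging any Riemannian metric on $\widetilde{U}$ over the finite group $G$, I obtain a $G$-equivariant Riemannian metric $\widetilde{\metric{g}}$ on $\widetilde{U}$; with respect to it, $\Phi_g$ is an isometry fixing $\widetilde{x}$ with trivial differential at $\widetilde{x}$. The key classical fact I would invoke is that an isometry of a connected Riemannian manifold is uniquely determined by its value and differential at a single point (one proves this by observing that both $\Phi_g$ and $\id$ send the geodesic $t\mapsto\exp_{\widetilde{x}}(tv)$ to itself for every $v$, hence agree on a neighborhood of $\widetilde{x}$, and then propagate this identity along chains of normal neighborhoods using connectedness of $\widetilde{U}$). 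It follows that $\Phi_g=\id_{\widetilde{U}}$, and the assumption that the $G$-action is effective then forces $g$ to be the neutral element.

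The only delicate point is the injectivity step, and within it the application of the rigidity of isometries; once one commits to using a $G$-equivariant metric, however, the rest is formal. Homomorphism, equivariance of the averaged metric, and effectiveness all plug in cleanly.
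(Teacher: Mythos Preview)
Your argument is correct and is essentially the standard proof of this fact (Bochner linearization via an invariant metric, combined with the rigidity of Riemannian isometries and the effectiveness of the action). The paper itself does not supply a proof; it merely records the statement with a citation to \cite[Lemma~2.10]{Moerdijk2003}, so there is nothing further to compare.
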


In the context of the previous lemma, it follows that the subgroup of $\GL(\real^n)$ defined as
\[
	\mathrm{d}_xG=\{\mathrm{d}_{\widetilde{x}}\Phi_g: g\in G\}\subset\GL(\real^n)
\]
is isomorphic to $G$. It follows that the structural groups of orbifold charts around $x\in\orbifold{O}$ are all isomorphic.
\begin{lem}[{see \cite[p. 39--40]{Moerdijk2003}}]\label{Orbifolds:Lem:LocalGroup}
If $x\in\orbifold{O}$ and $(\widetilde{U},G,\phi), (\widetilde{V},H,\psi)$ are orbifold charts around $x$ on $\orbifold{O},$ then 
\[
	\mathrm{d}_xG
	=
	k^{-1}(\mathrm{d}_xH)k
	=
	\{
		k^{-1}hk: h\in H_x
	\}
\]
for a certain $k\in\GL(\real^n).$
\end{lem}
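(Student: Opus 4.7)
The plan is to use the compatibility axioms of the orbifold atlas to produce a common refinement of the two charts around $x$, and then to differentiate the intertwining relations provided by the resulting orbifold embeddings.

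First, by applying the fourth axiom of an orbifold atlas to the point $x \in U \cap V$, I obtain an orbifold chart $(\widetilde{W}, K, \chi)$ around $x$ with $W \subset U \cap V$. The third axiom then provides orbifold embeddings $\lambda \colon \widetilde{W} \to \widetilde{U}$ and $\mu \colon \widetilde{W} \to \widetilde{V}$ satisfying $\phi \circ \lambda = \chi$ and $\psi \circ \mu = \chi$. Writing $\widetilde{x} := \phi^{-1}(x)$, $\widetilde{y} := \psi^{-1}(x)$, and $\widetilde{z} := \chi^{-1}(x)$ (all singletons by the chart axiom), these relations force $\lambda(\widetilde{z}) = \widetilde{x}$ and $\mu(\widetilde{z}) = \widetilde{y}$.

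The next step is to invoke the standard structural fact (compare the discussion on \cite[p.~39--40]{Moerdijk2003} already referenced) that an orbifold embedding $\lambda \colon (\widetilde{W}, K, \chi) \to (\widetilde{U}, G, \phi)$ determines a unique group homomorphism $\overline{\lambda} \colon K \to G$ characterized by $\lambda \circ \Phi_g = \Phi_{\overline{\lambda}(g)} \circ \lambda$ for every $g \in K$, and that $\overline{\lambda}$ is injective thanks to effectiveness of the $K$-action on $\widetilde{W}$. In the present situation I need more: $\overline{\lambda}$ is actually a bijection. This is where the hypothesis bites, because $\phi^{-1}(x)$ being a singleton forces every element of $G$ to stabilize $\widetilde{x}$, and a standard uniqueness-of-lifts argument then produces, for each $g \in G$, a unique $k \in K$ with $\Phi_g \circ \lambda = \lambda \circ \Phi_k$. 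The same reasoning applied to $\mu$ produces an isomorphism $\overline{\mu} \colon K \to H$.

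Finally, differentiating the identities $\Phi_{\overline{\lambda}(g)} \circ \lambda = \lambda \circ \Phi_g$ and $\Phi_{\overline{\mu}(g)} \circ \mu = \mu \circ \Phi_g$ at $\widetilde{z}$ and setting $L := \mathrm{d}_{\widetilde{z}} \lambda$ and $M := \mathrm{d}_{\widetilde{z}} \mu$---which lie in $\GL(\real^n)$ because orbifold embeddings are local diffeomorphisms---I obtain
\[
    \mathrm{d}_{\widetilde{x}} \Phi_{\overline{\lambda}(g)} = L \cdot \mathrm{d}_{\widetilde{z}} \Phi_g \cdot L^{-1} \quad \text{and} \quad \mathrm{d}_{\widetilde{y}} \Phi_{\overline{\mu}(g)} = M \cdot \mathrm{d}_{\widetilde{z}} \Phi_g \cdot M^{-1}.
\]
Since $\overline{\lambda}$ and $\overline{\mu}$ are bijections, letting $g$ range over $K$ gives $\mathrm{d}_x G = L \, (\mathrm{d}_x K) \, L^{-1}$ and $\mathrm{d}_x H = M \, (\mathrm{d}_x K) \, M^{-1}$, whence $\mathrm{d}_x G = (L M^{-1}) (\mathrm{d}_x H) (L M^{-1})^{-1}$. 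Taking $k := M L^{-1} \in \GL(\real^n)$ produces the claimed conjugation. The main obstacle is the bijectivity of $\overline{\lambda}$: injectivity is formal, but surjectivity really relies on the singleton-preimage property and on a lift-extension lemma for orbifold embeddings, which I would cite from the standard orbifold literature rather than reprove here.
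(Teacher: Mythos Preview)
The paper does not supply its own proof of this lemma; it simply records the statement with a citation to \cite[p.~39--40]{Moerdijk2003}. Your argument is a correct fleshing-out of precisely the standard proof found there: pass to a common refinement chart, use the induced group homomorphisms from the embeddings, observe that the singleton-preimage hypothesis forces each structural group to coincide with its isotropy at the distinguished point, and then differentiate. Your identification of the surjectivity of $\overline{\lambda}$ as the only nontrivial step, and your decision to cite it rather than reprove it, matches exactly what the paper does at the level of the whole lemma. There is nothing to correct.
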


Due to the previous lemma, we can introduce the notion of local groups and strata on an orbifold.
\begin{defn}\label{Suborbifolds:Defn:LocalGroup}
Given $x\in\orbifold{O}$, we define its \emph{local group} as
\[
	\Gamma^{\orbifold{O}}(x)
	=
	\{h^{-1}(\mathrm{d}_xG)h: h\in\GL(\real^n)\}
\]
and we set $|\Gamma^{\orbifold{O}}(x)|:=|G|$, where $G$ is the structural group of any orbifold chart around $x$ on $\orbifold{O}$. The \emph{stratum} $\Sigma^{\orbifold{O}}(x)$ is defined as the connected component of
$
	\{y\in\orbifold{O}: \Gamma^\orbifold{O}(x)=\Gamma^\orbifold{O}(y)\}
$
which contains $x$.
\end{defn}

Let us prove that the strata of $\orbifold{O}$ canonically inherit a manifold structure from the orbifold structure of $\orbifold{O}$.
\begin{lem}\label{Orbifolds:Lem:StratumManifold}
If $\Sigma$ is a stratum of $\orbifold{O},$ then $\Sigma$ admits a natural manifold structure such that $\Sigma$ is an embedded submanifold of $\orbifold{O}$.
\end{lem}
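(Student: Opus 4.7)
The plan is to construct local manifold charts on $\Sigma$ by identifying it, inside each linear chart on $\orbifold{O}$, with the fixed-point set of the structural group, which is a linear subspace. The key observation is that, given a linear chart $(\real^n,G,\phi)$ centered at $x\in\Sigma$ and a point $\widetilde{y}\in\real^n$ with isotropy $G_{\widetilde{y}}:=\{g\in G:g\widetilde{y}=\widetilde{y}\}$, the local group at $y=\phi(\widetilde{y})$ is represented by $G_{\widetilde{y}}\subset G$; since two conjugate subgroups of $\GL(\real^n)$ must have the same cardinality, the condition $\Gamma^{\orbifold{O}}(y)=\Gamma^{\orbifold{O}}(x)$ forces $G_{\widetilde{y}}=G$, i.e., $\widetilde{y}\in V:=(\real^n)^G$.

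Next, I would leverage the finiteness of $G$ to average an inner product on $\real^n$ and obtain a $G$-equivariant orthogonal decomposition $\real^n=V\oplus W$, where $V=(\real^n)^G$ is a linear subspace of some dimension $k$ and $W$ is a $G$-invariant complement on which $G$ acts without non-zero fixed vectors. After a linear change of coordinates on $\real^n$ (which keeps $(\real^n,G,\phi)$ a linear chart after conjugating $G$ within $\GL(\real^n)$), one may assume $V=\real^k\times\{0_{\real^{n-k}}\}$. Since $V$ is connected and $x=\phi(0)\in\phi(V)$, local connectedness gives $\phi(V)=\phi(\real^n)\cap\Sigma$. Because $G$ acts trivially on $V$, the triple $(\real^k,\{1\},\psi)$ with $\psi:=\phi\circ\pr_1$ is an orbifold chart on $\Sigma$, and $(\real^n,G,\phi)$ is adapted to $\Sigma$ with trivial structural subgroup.

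To assemble these local charts into a manifold structure, I would verify compatibility of the resulting manifold charts on $\Sigma$: if $(\real^n,G_i,\phi_i)$ are two such adapted linear charts with overlapping images, then any orbifold embedding $\lambda$ between suitable shrinkings (provided by the atlas axioms) sends the fixed subspace of the source into the fixed subspace of the target (since $\lambda$ intertwines the group actions via a group monomorphism and $G_j$-equivariance preserves fixed points), so $\lambda$ restricts to a smooth embedding between the corresponding $\real^k$-pieces. Hence the transition maps $\psi_j^{-1}\circ\psi_i$ are smooth diffeomorphisms, giving $\Sigma$ the structure of a $k$-dimensional smooth manifold; the existence of adapted linear charts then exhibits it as an embedded suborbifold (in fact, submanifold) of $\orbifold{O}$.

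The main delicate point I expect is the compatibility of transition maps: one needs to be sure that the orbifold embeddings in the atlas can be restricted to the fixed-point subspaces and remain smooth embeddings between them, which relies on the equivariance condition built into orbifold embeddings together with the characterization of the strata as fixed-point sets. The reduction to standard position $V=\real^k\times\{0\}$ and the identification of $U\cap\Sigma$ with $\phi(V)$ are the other places where care is needed, but they reduce to finite-group representation theory and basic topology.
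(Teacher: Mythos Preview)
Your proposal is correct and follows essentially the same approach as the paper: both identify $\Sigma$ locally with the fixed-point subspace $V_x=(\real^n)^{G_x}$ of a linear chart centered at $x\in\Sigma$, then use linear isomorphisms $\real^{k}\to V_x$ to build manifold charts. Your treatment is in fact more thorough than the paper's, which does not spell out the isotropy argument $G_{\widetilde{y}}=G$ for $\phi(V_x)=U_x\cap\Sigma$, the adapted-chart description, or the compatibility of transition maps via the (automatic) equivariance of orbifold embeddings; the paper simply asserts that $\dim V_x$ is constant along $\Sigma$ and that $\{(\real^k,\phi_x\circ\Psi_x)\}_{x\in\Sigma}$ is a smooth atlas.
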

\begin{proof}
Fix $x\in\Sigma$, let $(\real^n,G_x,\phi_x)$ be a linear chart centered at $x$ on $\orbifold{O}$ and set
\[
	V_x=\{u\in\real^n: gu=u~\text{for any}~g\in G_x\}.
\]
As $V_x$ is a vector subspace of $\real^n$, we can fix a linear isomorphism $\Psi_x\colon\real^{k_x}\to V_x$ for a certain $k_x\in\{0,\ldots,n\}$.

We want to show that $k:=k_{x_1}=k_{x_2}$ for any $x_1,x_2\in\Sigma.$ Let $(\real^n,G_{x_1},\phi_{x_1})$ and $(\real^n,G_{x_2},\phi_{x_2})$ be, respectively, linear charts centered at $x_1$ and $x_2$ on $\orbifold{O}$. It is a corollary of the definitions of $V_{x_1}$ and $V_{x_2}$ that these are isomorphic vector spaces, so $k_{x_1}=k_{x_2}.$

We conclude that $\{(\real^k,\phi_x\circ\Psi_x)\}_{x\in\Sigma}$ is a smooth atlas of $\Sigma$ which induces the structure in the statement of the lemma.
\end{proof}

We introduce a decomposition of $\orbifold{O}$ according to the local group of its points. The \emph{regular locus} of $\orbifold{O}$ is defined as
\[
	\orbifold{O}^{\reg}=\{x\in\orbifold{O}: \Gamma^{\orbifold{O}}(x)=\{\id_{\real^n}\}\}
\]
and we call its complement $\orbifold{O}^{\sing}=\orbifold{O}\setminus\orbifold{O}^{\reg}$ the \emph{singular locus} of $\orbifold{O}$. In fact, $\orbifold{O}^{\reg}$ is a dense open subset of $\orbifold{O}$ (see \cite[Lemma 2.10]{Moerdijk2003}) and so it follows from the previous lemma that if $\Sigma$ is a singular stratum of $\orbifold{O}$, then it is a closed submanifold of $\orbifold{O}$.

\section{Riemannian orbifolds}\label{RiemannianOrbifolds}

This section is loosely based on \cite[Section 6]{Borzellino2008} and it aims to introduce the concept of a Riemannian orbifold and usual constructions in this context. As novelty, we introduce the concept of \emph{weakly convex} subsets and we show that the \emph{Riemannian center of mass} may be considered on Riemannian orbifolds.

\subsection*{The Riemannian orbifold structure}

Suppose that $\orbifold{O}$ is an $n$-dimensional orbifold. Given $x\in\orbifold{O},$ we say that $\metric{g}_x\colon \T_x\orbifold{O}\times \T_x\orbifold{O}\to\real$ is an \emph{inner product} when there exist
\begin{enumerate}
\item
	$(\widetilde{U},G,\phi)$, an orbifold chart around $x$ on $\orbifold{O}$, and
\item
	$\widetilde{\metric{g}}_{\widetilde{x}}\colon \real^n\times \real^n\to\real$, a $(G\times G)$-invariant inner product
\end{enumerate}
such that $\widetilde{\metric{g}}_{\widetilde{x}}=(\mathrm{d}_{\widetilde{x}}\phi)^*\metric{g}_x$, where $\widetilde{x}:=\phi^{-1}(x)\in\widetilde{U}$ and $G\times G$ acts on $\real^n\times\real^n$ through
\[
	(g_1,g_2)(v_1,v_2):=(\mathrm{d}_{\widetilde{x}}\Phi_{g_1}(v_1),\mathrm{d}_{\widetilde{x}}\Phi_{g_2}(v_2))
\]
for any $g_1,g_2\in G$ and $u,v\in\real^n$. In this context, $(\orbifold{O},\metric{g})$ is said to be a \emph{Riemannian orbifold} when $\metric{g}=\{\metric{g}_x\}_{x\in\orbifold{O}}$ is a \emph{Riemannian metric} on $\orbifold{O},$ i.e.,
\begin{enumerate}
\item
	given $x\in\orbifold{O},$ $\metric{g}_x\colon T_x\orbifold{O}\times T_x\orbifold{O}\to\real$ is an inner product, and
\item
	if $\sigma,\tau\in\mathfrak{X}(\orbifold{O}),$ then
\[
	\orbifold{O}\ni x\mapsto \metric{g}(\sigma,\tau)(x):=\metric{g}_x(\sigma_x,\tau_x)\in\real
\]
induces an orbifold map.
\end{enumerate}


\subsection*{Riemannian structures on orbifolds}

Let us introduce a few operations on the Riemannian orbifold $(\orbifold{O},\metric{g})$ which are made possible by the presence of a Riemannian metric.

\begin{defn}
If $\phi\colon\widetilde{U}\to\orbifold{O}$ is a chart map, then we canonically associate it to the Riemannian manifold $(\widetilde{U},\widetilde{\metric{g}}:=\phi^*\metric{g})$.
\end{defn}

As in Riemannian manifolds, we can define the gradient of real-valued orbifold maps.

\begin{defn}
Given $u\in C^\infty(\orbifold{O})$, the \emph{gradient} of $u$ on $(\orbifold{O},\metric{g})$ is the vector field $\grad^{(\orbifold{O},\metric{g})}(u)\in\mathfrak{X}(\orbifold{O})$ characterized by the following property: if $\phi\colon\widetilde{U}\to\orbifold{O}$ is a chart map and $\widetilde{u}\colon\widetilde{U}\to\real$ is a local lift of $u$, then
\[
	\widetilde{U}\ni\widetilde{x}
	\mapsto
	(\widetilde{\metric{g}}^{i1}(\widetilde{x})\partial_i\widetilde{u}(\widetilde{x}),\ldots,\widetilde{\metric{g}}^{in}(\widetilde{x})\partial_i\widetilde{u}(\widetilde{x}))
	\in\real^n
\]
is a local lift of $\grad^{(\orbifold{O},\metric{g})}(u)\in\mathfrak{X}(\orbifold{O})$, where the expression above employs Einstein's summation convention.
\end{defn}

The Riemannian metric $\metric{g}$ induces a Radon measure on $\orbifold{O}$.
\begin{defn}\label{RiemannianStructures:Defn:Radon}
We define $\mu^{(\orbifold{O},\metric{g})}$ as the unique measure on $\orbifold{O}$ such that if $(\widetilde{U},G,\phi)$ is an orbifold chart on $\orbifold{O}$ and $B$ is a Borel subset of $U$, then
\[
	\mu^{(\orbifold{O},\metric{g})}(B)=\frac{1}{|G|}\int_{\widetilde{U}}\mathbb{I}_{\phi^{-1}(B)}(\widetilde{x})|\det\widetilde{\metric{g}}(x)|^{1/2}\mathrm{d}\widetilde{x}
\]
whenever $B$ is a Borel subset of $U$.
\end{defn}

\subsection*{The length space structure}

Let us sketch how to canonically associate the connected Riemannian orbifold $(\orbifold{O},\metric{g})$ to a length space $(\orbifold{O},\dist^{(\orbifold{O},\metric{g})})$ (for a reminder about length spaces, we refer the reader to Appendix \ref{LengthSpaces}). We begin by recalling the following result.
\begin{prop}[{\cite[Propositions 36, 37]{Borzellino1992}}]\label{LengthSpace:Proposition:Borzellino}
Suppose that $\phi\colon\widetilde{U}\to U$ is a chart map on $\orbifold{O}$ and $\gamma\colon [a,b]\to U$ is a continuous curve for which there exists a nondecreasing $\{t_k\}_{k\in\nat_0}\subset [a,b]$ such that $t_0=a$, $t_k\to b$ as $k\to\infty$ and given $k\in\nat_0$, $\gamma(]t_k,t_{k+1}[)$ is contained in a single stratum of $\orbifold{O}$. In this situation, there exists $\length^{(\orbifold{O},\metric{g})}(\gamma)\in[0,\infty[$ such that if $\widetilde{\gamma}\colon [a,b]\to\widetilde{U}$ is continuous and $\gamma=\phi\circ\widetilde{\gamma}$, then $\length^{(\widetilde{U},\widetilde{\metric{g}})}(\widetilde{\gamma})=\length^{(\orbifold{O},\metric{g})}(\gamma)$.
\end{prop}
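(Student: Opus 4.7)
My plan is to define $\length^{(\orbifold{O},\metric{g})}(\gamma)$ as $\length^{(\widetilde{U},\widetilde{\metric{g}})}(\widetilde{\gamma})$ for any continuous lift $\widetilde{\gamma}$ of $\gamma$ and then to establish independence from the choice of lift. Existence of a continuous lift is routine: partition $[a,b]$ into finitely many subintervals each of whose image sits inside a single fundamental open subset, lift each piece through the corresponding orbifold chart, and reconcile consecutive lifts at the breakpoints by translating by appropriate elements of $G$ (possible because the fibers of $\phi$ are exactly the $G$-orbits). The substance of the proof is the well-definedness claim.

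Fix two continuous lifts $\widetilde{\gamma}_1, \widetilde{\gamma}_2 \colon [a,b] \to \widetilde{U}$. Since $\widetilde{\metric{g}} = \phi^*\metric{g}$ is $G$-invariant by the very definition of the Riemannian structure on $\orbifold{O}$, each $\Phi_g$ acts as an isometry of $(\widetilde{U},\widetilde{\metric{g}})$. It therefore suffices to produce, for every $k$, an element $g_k \in G$ with $\widetilde{\gamma}_2 = \Phi_{g_k} \circ \widetilde{\gamma}_1$ on $[t_k, t_{k+1}]$, because summing the lengths of $\widetilde{\gamma}_1$ over the subintervals then yields a value independent of the lift.

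Fix $k$ and set $I_k := \,]t_k, t_{k+1}[$. By hypothesis $\gamma(I_k)$ lies in a single stratum $\Sigma$ of $\orbifold{O}$, so at every $t \in I_k$ the stabilizer $G_{\widetilde{\gamma}_j(t)}$ lies in a fixed conjugacy class and in particular has fixed cardinality. Combined with upper semicontinuity of stabilizers under the smooth action of the finite group $G$, this forces $t \mapsto G_{\widetilde{\gamma}_j(t)}$ to be locally constant on $I_k$, hence constant by connectedness; call the common value $H_j$. Each $\widetilde{\gamma}_j(I_k)$ then lies in a single connected component $C_j$ of the smooth submanifold $\{x \in \widetilde{U} : G_x = H_j\}$ of $\widetilde{U}$. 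Pick any $t^* \in I_k$ and any $g_k \in G$ with $\Phi_{g_k}(\widetilde{\gamma}_1(t^*)) = \widetilde{\gamma}_2(t^*)$. The curves $\Phi_{g_k} \circ \widetilde{\gamma}_1$ and $\widetilde{\gamma}_2$ are then continuous lifts of $\gamma|_{I_k}$ coinciding at $t^*$ and both taking values in $C_2 = \Phi_{g_k}(C_1)$. The slice theorem for the smooth action of a finite group shows that $\phi|_{C_2} \colon C_2 \to \phi(C_2)$ is a covering map, so the unique path-lifting property gives $\Phi_{g_k} \circ \widetilde{\gamma}_1 = \widetilde{\gamma}_2$ on $I_k$; continuity extends the equality to $[t_k, t_{k+1}]$.

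The main obstacle I anticipate is the covering-map claim in the last step, namely that $\phi$ restricted to a connected component of a $G$-stratum of $\widetilde{U}$ is a covering map onto its image. This is a standard consequence of the slice theorem together with a careful identification of local models around singular points, but the argument is technical. The related assertion that stabilizers are constant along continuous curves inside a single stratum relies on the same local analysis. Both ingredients sit at the same level of detail as the stratum theory developed in Section~\ref{Orbifolds} and would need to be recorded before being invoked in the final step.
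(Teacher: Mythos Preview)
The paper does not supply a proof of this proposition: it is stated with a citation to \cite[Propositions 36, 37]{Borzellino1992} and then used as a black box. So there is no ``paper's own proof'' to compare against; one can only assess your argument on its merits and against the likely shape of Borzellino's original.

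Your strategy is the natural one and matches what one expects from the cited source: reduce to showing that any two continuous lifts differ, on each stratum-piece $[t_k,t_{k+1}]$, by the action of a single element of $G$, and conclude by $G$-invariance of $\widetilde{\metric g}$. The ingredients you isolate (local constancy of isotropy along a curve in a fixed stratum, and the covering property of $\phi$ restricted to an isotropy-type component) are exactly the ones needed, and you are right that they sit at the level of the slice theorem for finite-group actions. Your honest flagging of these as the technical load-bearing steps is appropriate.

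Two points deserve attention. First, the statement asserts $\length^{(\orbifold{O},\metric{g})}(\gamma)\in[0,\infty[$, i.e.\ finiteness, but a merely continuous curve need not be rectifiable; your argument establishes well-definedness of the length (possibly $+\infty$) but not finiteness, and indeed finiteness cannot hold under the hypotheses as literally written. This is almost certainly an imprecision in how the result is transcribed from \cite{Borzellino1992} (where one presumably works with piecewise $C^1$ curves or similar), not a flaw in your reasoning, but you should note it. Second, your sketch of the existence of a continuous lift is too brisk: the curve may hit the singular set at the endpoints $t_k$, and gluing lifts across such points is exactly where the finite-group quotient structure (as opposed to a genuine covering) matters. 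You allude to ``translating by appropriate elements of $G$'', which is the right move, but the argument that this can be done coherently at a branch point is not quite routine and is in fact close in spirit to the uniqueness argument you give afterwards.
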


The following definition is inspired by the setting in the previous result.

\begin{defn}[{\cite[Definition 35]{Borzellino1992}}]\label{LengthSpaceStructure:Defn:Admissible2}
A continuous map $\gamma\colon [a',b']\to\orbifold{O}$ is said to be an \emph{admissible curve} whenever the following implication holds: if $U$ is a fundamental open subset of $\orbifold{O}$ and $a,b$ are such that $\gamma([a,b])\subset U$, then $[a',b']\ni t\mapsto \gamma(t)\in U$ satisfies the hypothesis in Proposition \ref{LengthSpace:Proposition:Borzellino}.
\end{defn}

By proceeding as in \cite[Theorem 38]{Borzellino1992}, we define $\length^{(\orbifold{O},\metric{g})}(\gamma)$ whenever $\gamma$ is an admissible curve. Finally, \cite[Theorem 40]{Borzellino1992} shows that the definition
\[
	\dist^{(\orbifold{O},\metric{g})}(x,y)=\inf\{
		\length^{(\orbifold{O},\metric{g})}(\gamma): \gamma~\text{is an admissible curve linking}~x~\text{to}~y
	\}
\]
for every $x,y\in\orbifold{O}$ yields a length space $(\orbifold{O},\dist^{(\orbifold{O},\metric{g})})$.

The concepts of (minimizing) geodesics and (strictly) convex functions are naturally defined in the context of length spaces, so we adopt the convention of considering these concepts on $(\orbifold{O},\metric{g})$ as their respective counterparts on $(\orbifold{O},\dist^{(\orbifold{O},\metric{g})})$. To finish, we remark that the topology induced by $\dist^{(\orbifold{O},\metric{g})}$ matches the topology of the underlying topological space $\X^\orbifold{O}$ and if $(\orbifold{O},\dist^{(\orbifold{O},\metric{g})})$ is a complete metric space, then given $x,y\in\orbifold{O}$, there exists a minimizing geodesic $\gamma\colon [0,1]\to (\orbifold{O},\metric{g})$ linking $x$ to $y$ (see \cite{Gromov1999}). 

\subsection*{The exponential map and the injectivity radius}

Let $(\orbifold{O},\metric{g})$ be a Riemannian orbifold. If $\phi\colon\widetilde{U}\to\orbifold{O}$ is a chart map, then we define
\[
	\widetilde{\Omega}(\widetilde{U},\phi)=\{(\widetilde{x},\widetilde{v})\in\widetilde{U}\times\real^n: |\widetilde{v}|<\inj^{(\widetilde{U},\widetilde{\metric{g}})}(\widetilde{x})\}.
\]
We also set
\[
	\Omega^{(\orbifold{O},\metric{g})}=\bigcup\left\{
		\mathrm{d}\phi(\widetilde{\Omega}(\widetilde{U},\phi)): \phi\colon\widetilde{U}\to\orbifold{O}~\text{is a chart map}
	\right\}\subset\T\orbifold{O}.
\]

\begin{lem}[{\cite[Proposition 6.7]{Borzellino2008}}]
Suppose that $\phi_1\colon\widetilde{U_1}\to\orbifold{O}$, $\phi_2\colon\widetilde{U_2}\to\orbifold{O}$ are chart maps. We conclude that
\[
	\phi_1\circ\exp^{(\widetilde{U_{1}},\widetilde{\metric{g}_{1}})}(\widetilde{x_1},\widetilde{v_1})=\phi_{2}\circ\exp^{(\widetilde{U_{2}},\widetilde{\metric{g}_{2}})}(\widetilde{x_2},\widetilde{v_2})
\]
whenever $(\widetilde{x_1},\widetilde{v_1})\in\widetilde{\Omega}(\widetilde{U_1},\phi_1)$, $(\widetilde{x_2},\widetilde{v_2})\in\widetilde{\Omega}(\widetilde{U_2},\phi_2)$ are such that $\mathrm{d}\phi_1(\widetilde{x_1},\widetilde{v_1})=\mathrm{d}\phi_2(\widetilde{x_2},\widetilde{v_2})\in\Omega^{(\orbifold{O},\metric{g})}$
\end{lem}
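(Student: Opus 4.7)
The plan is to represent both tangent vectors through a common refinement chart, exploiting that any orbifold embedding between chart domains is automatically an isometric embedding (because both sides pull back $\metric{g}$) and that each $\Phi_g$ acts by isometries on $(\widetilde{U_i},\widetilde{\metric{g}_i})$. With this setup, the claim reduces to the elementary fact that isometries of Riemannian manifolds commute with the exponential map wherever it is defined.

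First I would set $x=\phi_1(\widetilde{x_1})=\phi_2(\widetilde{x_2})\in U_1\cap U_2$ and invoke the orbifold atlas axiom to produce a chart $(\widetilde{U_3},G_3,\phi_3)$ around $x$ with $U_3\subset U_1\cap U_2$, together with orbifold embeddings $\lambda_i\colon\widetilde{U_3}\to\widetilde{U_i}$ satisfying $\phi_i\circ\lambda_i=\phi_3$ for $i=1,2$. Writing $\widetilde{x_3}=\phi_3^{-1}(x)$, the identity $\phi_i\circ\lambda_i=\phi_3$ forces $\lambda_i(\widetilde{x_3})\in\phi_i^{-1}(x)=G_i\cdot\widetilde{x_i}$, so I may pick $g_i\in G_i$ with $\Phi_{g_i}(\widetilde{x_i})=\lambda_i(\widetilde{x_3})$. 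Unfolding the equivalence that defines $\T\orbifold{O}$ from the hypothesis $\mathrm{d}\phi_1(\widetilde{x_1},\widetilde{v_1})=\mathrm{d}\phi_2(\widetilde{x_2},\widetilde{v_2})$ then yields a $\widetilde{v_3}\in\real^n$ satisfying $\mathrm{d}_{\widetilde{x_3}}\lambda_i(\widetilde{v_3})=\mathrm{d}_{\widetilde{x_i}}\Phi_{g_i}(\widetilde{v_i})$ for both $i=1,2$.

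Next, since $\lambda_i^*\widetilde{\metric{g}_i}=\widetilde{\metric{g}_3}$ (both sides pull back $\metric{g}$), each $\lambda_i$ is a local isometry; and since $\widetilde{\metric{g}_i}$ is $G_i$-invariant, $\Phi_{g_i}$ is an isometry of $(\widetilde{U_i},\widetilde{\metric{g}_i})$. Both thus commute with the geodesic flow wherever it is defined, so
\[
	\lambda_i\circ\exp^{(\widetilde{U_3},\widetilde{\metric{g}_3})}(\widetilde{x_3},\widetilde{v_3})
	=\exp^{(\widetilde{U_i},\widetilde{\metric{g}_i})}(\lambda_i(\widetilde{x_3}),\mathrm{d}_{\widetilde{x_3}}\lambda_i(\widetilde{v_3}))
	=\Phi_{g_i}\circ\exp^{(\widetilde{U_i},\widetilde{\metric{g}_i})}(\widetilde{x_i},\widetilde{v_i}).
\]
Composing with $\phi_i$ and invoking its $G_i$-invariance together with $\phi_i\circ\lambda_i=\phi_3$ then collapses both $\phi_1\circ\exp^{(\widetilde{U_1},\widetilde{\metric{g}_1})}(\widetilde{x_1},\widetilde{v_1})$ and $\phi_2\circ\exp^{(\widetilde{U_2},\widetilde{\metric{g}_2})}(\widetilde{x_2},\widetilde{v_2})$ to $\phi_3\circ\exp^{(\widetilde{U_3},\widetilde{\metric{g}_3})}(\widetilde{x_3},\widetilde{v_3})$, yielding the desired equality.

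The main technical subtlety I foresee concerns the domain of the exponential map after descending to $\widetilde{U_3}$: although $(\widetilde{x_i},\widetilde{v_i})\in\widetilde{\Omega}(\widetilde{U_i},\phi_i)$, the corresponding geodesic need not stay inside $\lambda_i(\widetilde{U_3})$, so $\exp^{(\widetilde{U_3},\widetilde{\metric{g}_3})}(\widetilde{x_3},\widetilde{v_3})$ may fail to be defined at time $1$. I would circumvent this by relying on uniqueness of solutions to the geodesic ODE under isometric conjugation, equating the geodesics in a small neighborhood of $\widetilde{x_3}$ and extending the identification along the maximal common interval of existence; the final identity then depends only on the images in $\orbifold{O}$, which are insensitive to whether the intermediate geodesic is realized inside $\widetilde{U_3}$ itself.
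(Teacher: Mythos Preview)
The paper does not supply its own proof of this lemma; it is stated with a citation to \cite[Proposition 6.7]{Borzellino2008} and then used immediately to define the orbifold exponential map. Consequently there is no in-paper argument to compare against.

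Your approach---pass to a common refinement chart $(\widetilde{U_3},G_3,\phi_3)$, observe that the embeddings $\lambda_i$ and the structural group actions $\Phi_{g_i}$ are isometries for the pulled-back metrics, and invoke naturality of $\exp$ under isometries---is precisely the standard one used in the reference, and the computation you display is correct on its domain of validity.

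The one point where you are a bit light is exactly the one you flag: the geodesic in $\widetilde{U_i}$ need not stay inside $\lambda_i(\widetilde{U_3})$, so the displayed chain of equalities only holds for $t$ near $0$. The clean way to finish is an open--closed argument on $[0,1]$: set $I=\{t\in[0,1]:\phi_1(\gamma_1(t))=\phi_2(\gamma_2(t))\}$, which is closed by continuity and contains $0$; at any $t_0\in I$ the common image lies in $U_1\cap U_2$, so you can choose a new refinement chart around it, adjust each $\lambda_i$ by an element of $G_i$ so that $\gamma_i(t_0)$ lands in its image, and rerun your isometry argument to get openness. Carrying the tangent data through this step (i.e.\ checking that $\mathrm{d}\phi_1(\gamma_1(t_0),\gamma_1'(t_0))=\mathrm{d}\phi_2(\gamma_2(t_0),\gamma_2'(t_0))$ in $\T\orbifold{O}$) follows because the curves agree on an interval to the left of $t_0$, so their lifts to the new $\widetilde{U_3}$ agree there and hence have the same one-sided derivative. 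Your phrase ``extending the identification along the maximal common interval of existence'' gestures at this, but making the inductive step explicit is what turns the sketch into a proof.
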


The previous lemma assures that the following map is well-defined.

\begin{defn}[The exponential map]
Given $v\in\Omega^{(\orbifold{O},\metric{g})}$, we define
\[
	\exp^{(\orbifold{O},\metric{g})}(v)=\phi\circ\exp^{(\widetilde{U},\widetilde{\metric{g}})}(\widetilde{x},\widetilde{v})
\]
for any triple $(\phi,\widetilde{x},\widetilde{v})$ where $\phi\colon\widetilde{U}\to\orbifold{O}$ is a chart map and $(\widetilde{x},\widetilde{v})\in\widetilde{U}\times\real^n$ is such that $\mathrm{d}\phi(\widetilde{x},\widetilde{v})=v$.
\end{defn}

As in Riemannian manifolds, we define the \emph{injectivity radius} $\inj^{(\orbifold{O},\metric{g})}\colon(\orbifold{O},\metric{g})\to]0,\infty[$ as
\begin{multline}
	\inj^{(\orbifold{O},\metric{g})}(x)
	=
	\sup\{\rho>0:\B^{(\T_x\orbifold{O},\metric{g}_x)}(0,\rho)\subset\Omega^{(\orbifold{O},\metric{g})}\}
	=
	\\
	=
	\sup\left\{
		\inj^{(\widetilde{U},\widetilde{\metric{g}})}(\widetilde{x}) \mid \phi\colon\widetilde{U}\to\orbifold{O}~\text{is a chart map}~\text{and}~\widetilde{x}\in\phi^{-1}(x)\neq\emptyset
	\right\}>0.
\end{multline}

In fact, metric balls on $(\orbifold{O},\metric{g})$ with a sufficiently small radius are fundamental open subsets of $\orbifold{O}$.

\begin{prop}
Suppose that $x\in\orbifold{O}$ and $0<\rho<\inj^{(\orbifold{O},\metric{g})}(x)$. We conclude that we can fix $G\subset \mathrm{O}(\real^n)$ and $\phi\colon\B^{\real^n}(0,\rho)=:\widetilde{U}\to U:=\B^{(\orbifold{O},\metric{g})}(x,\rho)$ for which $(\widetilde{U},G,\phi)$ becomes a \emph{normal chart} centered at $x$ on $(\orbifold{O},\metric{g})$, i.e., there exists a $\metric{g}_x$-orthonormal $\{v_1,\ldots,v_n\}\subset\T_x\orbifold{O}$ such that
\[
	\phi(t)=\exp^{(\orbifold{O},\metric{g})}_x(t_1v_1+\ldots+t_nv_n)
\]
for any $t:=(t_1,\ldots,t_n)\in\widetilde{U}$.
\end{prop}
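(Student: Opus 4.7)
The plan is to transport the normal chart construction from Riemannian manifolds to orbifolds through a chart map $\psi\colon\widetilde{V}\to\orbifold{O}$ centered at $x$. The key observation is that in an orbifold chart centered at $x$ (with $\psi^{-1}(x)$ unitary), the structural group $H$ equals the isotropy subgroup at $\widetilde{x}:=\psi^{-1}(x)$, so $H$ acts linearly on $\T_{\widetilde{x}}\widetilde{V}$ by $\widetilde{\metric{g}}_{\widetilde{x}}$-isometries, yielding the desired subgroup of $\mathrm{O}(\real^n)$ after an orthonormal identification.

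First, using the characterization of $\inj^{(\orbifold{O},\metric{g})}(x)$ as a supremum over chart maps, fix $\psi\colon\widetilde{V}\to\orbifold{O}$ with structural group $H$ such that $\psi^{-1}(x)=\{\widetilde{x}\}$ and $\inj^{(\widetilde{V},\widetilde{\metric{g}})}(\widetilde{x})>\rho$. Since $\psi$ is $H$-invariant with a single preimage, every $h\in H$ fixes $\widetilde{x}$. Choose a $\widetilde{\metric{g}}_{\widetilde{x}}$-orthonormal basis $\{v_1,\ldots,v_n\}$ of $\T_{\widetilde{x}}\widetilde{V}$, giving a linear isometry $\iota\colon\real^n\to\T_{\widetilde{x}}\widetilde{V}$, and define
\[
	\phi(t):=\psi\circ\exp^{(\widetilde{V},\widetilde{\metric{g}})}_{\widetilde{x}}\circ\iota(t)
\]
for $t\in\B^{\real^n}(0,\rho)$. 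By the compatibility lemma preceding the definition of the orbifold exponential, this coincides with $\exp^{(\orbifold{O},\metric{g})}_x(t_1v_1+\ldots+t_nv_n)$, realizing the ``normal chart'' formula. For the structural group, let
\[
	G:=\{\iota^{-1}\circ\mathrm{d}_{\widetilde{x}}\Phi_h\circ\iota:h\in H\}\subset\mathrm{O}(\real^n);
\]
it lies in $\mathrm{O}(\real^n)$ because $\widetilde{\metric{g}}$ is $H$-equivariant, and by Lemma \ref{Orbifolds:Lem:Monomorphism} the assignment $h\mapsto\iota^{-1}\circ\mathrm{d}_{\widetilde{x}}\Phi_h\circ\iota$ is a group isomorphism $H\to G$.

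The remaining task is to verify that $(\B^{\real^n}(0,\rho),G,\phi)$ satisfies the four axioms of an orbifold chart. $G$-invariance of $\phi$ follows from the classical identity $\Phi_h\circ\exp^{(\widetilde{V},\widetilde{\metric{g}})}_{\widetilde{x}}=\exp^{(\widetilde{V},\widetilde{\metric{g}})}_{\widetilde{x}}\circ\mathrm{d}_{\widetilde{x}}\Phi_h$ on the injective ball (which holds because $\Phi_h$ is an isometry fixing $\widetilde{x}$) combined with the $H$-invariance of $\psi$, and $\phi^{-1}(x)=\{0\}$ is immediate from the injectivity of $\exp^{(\widetilde{V},\widetilde{\metric{g}})}_{\widetilde{x}}$ on the $\rho$-ball. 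I expect the main obstacle to be showing that $\phi$ induces a homeomorphism $\B^{\real^n}(0,\rho)/G\to\B^{(\orbifold{O},\metric{g})}(x,\rho)$. The containment $\phi(\B^{\real^n}(0,\rho))\subset\B^{(\orbifold{O},\metric{g})}(x,\rho)$ follows from the length estimate on radial admissible curves (these have length $|t|<\rho$). The converse requires, for each $y\in\B^{(\orbifold{O},\metric{g})}(x,\rho)$, lifting a minimizing admissible curve from $x$ to $y$ into $\widetilde{V}$ starting at $\widetilde{x}$; since the $\rho$-ball lies below the chart's Riemannian injectivity radius, such a lift exists and lands in $\B^{(\widetilde{V},\widetilde{\metric{g}})}(\widetilde{x},\rho)=\exp^{(\widetilde{V},\widetilde{\metric{g}})}_{\widetilde{x}}(\iota(\B^{\real^n}(0,\rho)))$. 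The quotient structure is inherited from $\psi$, which already induces a homeomorphism $\widetilde{V}/H\to\psi(\widetilde{V})$; restricting it to the $H$-invariant ball around $\widetilde{x}$ yields the desired homeomorphism on the orbifold ball.
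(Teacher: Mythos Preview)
Your proposal is correct and follows essentially the same construction as the paper's (suppressed) proof: pick an ambient chart $\psi$ around $x$ with injectivity radius exceeding $\rho$, precompose the Riemannian exponential at $\widetilde{x}$ with a linear isometry $\iota\colon(\real^n,\langle\cdot,\cdot\rangle)\to(\T_{\widetilde{x}}\widetilde{V},\widetilde{\metric{g}}_{\widetilde{x}})$, and take $G$ to be the conjugates of the differentials $\mathrm{d}_{\widetilde{x}}\Phi_h$ by $\iota$. Your write-up is in fact more careful than the paper's sketch, since you explicitly check $G$-invariance via the isometry identity $\Phi_h\circ\exp_{\widetilde{x}}=\exp_{\widetilde{x}}\circ\mathrm{d}_{\widetilde{x}}\Phi_h$ and address why $\phi$ is onto $\B^{(\orbifold{O},\metric{g})}(x,\rho)$.
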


A similarity with Riemannian manifolds is that radial geodesics are minimizing geodesics.
\begin{lem}
If $v\in\Omega^{(\orbifold{O},\metric{g})}$, then $[0,1]\ni t\mapsto \gamma(t):=\exp^{(\orbifold{O},\metric{g})}(tv)$ is a minimizing geodesic on $(\orbifold{O},\metric{g})$.
\end{lem}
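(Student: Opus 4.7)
The plan is to reduce the assertion to the classical Riemannian Gauss lemma applied inside a single chart. Letting $x\in\orbifold{O}$ denote the base point of $v$, I would first invoke the preceding proposition to fix a normal chart $(\widetilde{U},G,\phi)$ centered at $x$ with $\widetilde{U} = \B^{\real^n}(0,\rho)$ for some $\rho$ satisfying $|v|_{\metric{g}_x} < \rho < \inj^{(\orbifold{O},\metric{g})}(x)$; then $U := \phi(\widetilde{U}) = \B^{(\orbifold{O},\metric{g})}(x,\rho)$, the finite group $G\subset\mathrm{O}(\real^n)$ acts linearly while fixing $0$, and $\inj^{(\widetilde{U},\widetilde{\metric{g}})}(0) \geq \rho$ by construction of the chart. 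Picking $\widetilde{v}\in\real^n$ with $\mathrm{d}\phi(0,\widetilde{v}) = v$ and setting $\widetilde{\gamma}(t) := \exp^{(\widetilde{U},\widetilde{\metric{g}})}(0,t\widetilde{v})$, the radial curve $\widetilde{\gamma}$ is a minimizing geodesic on the Riemannian manifold $(\widetilde{U},\widetilde{\metric{g}})$ with $\gamma = \phi\circ\widetilde{\gamma}$ by the very definition of $\exp^{(\orbifold{O},\metric{g})}$, so Proposition \ref{LengthSpace:Proposition:Borzellino} yields $\length^{(\orbifold{O},\metric{g})}(\gamma) = \length^{(\widetilde{U},\widetilde{\metric{g}})}(\widetilde{\gamma}) = |\widetilde{v}| = |v|_{\metric{g}_x}$.

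For minimality I would take an arbitrary admissible curve $\sigma\colon[0,1]\to\orbifold{O}$ joining $x$ to $y := \gamma(1)$ and aim at $\length^{(\orbifold{O},\metric{g})}(\sigma) \geq |v|_{\metric{g}_x}$. If $\length^{(\orbifold{O},\metric{g})}(\sigma) \geq \rho$ we are done, since $\rho > |v|_{\metric{g}_x}$. Otherwise the length-space estimate $\dist^{(\orbifold{O},\metric{g})}(x,\sigma(t)) \leq \length(\sigma|_{[0,t]}) < \rho$ forces $\sigma([0,1]) \subset U$, so I would lift $\sigma$ to an admissible curve $\widetilde{\sigma}\colon[0,1]\to\widetilde{U}$ with $\widetilde{\sigma}(0) = 0$ by patching local inverses of the quotient map $\phi$ along the compact set $\sigma([0,1])$. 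Proposition \ref{LengthSpace:Proposition:Borzellino} then gives $\length^{(\orbifold{O},\metric{g})}(\sigma) = \length^{(\widetilde{U},\widetilde{\metric{g}})}(\widetilde{\sigma})$. Since $\phi(\widetilde{\sigma}(1)) = y = \phi(\widetilde{v})$, one has $\widetilde{\sigma}(1) = g\widetilde{v}$ for some $g\in G$; and because $\widetilde{\metric{g}} = \phi^*\metric{g}$ together with $\phi\circ\Phi_g = \phi$ imply that $G$ acts on $(\widetilde{U},\widetilde{\metric{g}})$ by isometries fixing $0$,
\[
	\length(\widetilde{\sigma}) \geq \dist^{(\widetilde{U},\widetilde{\metric{g}})}(0,g\widetilde{v}) = \dist^{(\widetilde{U},\widetilde{\metric{g}})}(0,\widetilde{v}) = |\widetilde{v}| = |v|_{\metric{g}_x}
\]
by the classical Riemannian Gauss lemma, which applies because $|\widetilde{v}| < \rho \leq \inj^{(\widetilde{U},\widetilde{\metric{g}})}(0)$.

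The main obstacle is the construction of the admissible lift $\widetilde{\sigma}$. Since $G$ is finite, every $\widetilde{p}\in\widetilde{U}$ admits a $G_{\widetilde{p}}$-invariant neighborhood whose $\phi$-image is evenly covered in the orbifold sense; by covering $\sigma([0,1])$ with finitely many such images and successively choosing compatible local inverses I would produce a continuous lift with $\widetilde{\sigma}(0) = 0$, after which admissibility of $\widetilde{\sigma}$ follows from admissibility of $\sigma$ through Definition \ref{LengthSpaceStructure:Defn:Admissible2}.
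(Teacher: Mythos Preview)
Your approach is essentially the paper's: work in a normal chart centered at $x$, lift $\gamma$ to the radial geodesic $\widetilde{\gamma}(t)=t\widetilde{v}$, and invoke the classical Gauss lemma on $(\widetilde{U},\widetilde{\metric{g}})$. The paper's own argument is a one-line sketch that simply asserts ``$\widetilde{\gamma}$ is a minimizing geodesic on $(\B^{\real^n}(0,R),\psi_x^*\metric{g})$, so $\gamma$ is a minimizing geodesic on $(\orbifold{O},\metric{g})$''; your competing-curve argument and lift construction make explicit the step the paper leaves implicit, and the path-lifting for quotients by finite group actions you outline is standard and correct.
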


The previous lemma implies a striking difference with Riemannian manifolds. It is well known that singular strata form an obstruction to length minimization (see \cite[Proposition 15]{Borzellino1993}), hence the following result.
\begin{prop}\label{ExponentialMap:Proposition:DiscontinuousInj}
Given $x\in\orbifold{O}^{\reg}$, $\inj^{(\orbifold{O},\metric{g})}(x)\leq\dist^{(\orbifold{O},\metric{g})}(x,\orbifold{O}^{\sing})$. In particular, the inequality $\orbifold{O}^{\sing}\neq\emptyset$ implies $\inj^{(\orbifold{O},\metric{g})}$ discontinuous and $\inf\inj^{(\orbifold{O},\metric{g})}=0$.
\end{prop}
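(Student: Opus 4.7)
The first inequality reduces to exhibiting, for every $\rho<\inj^{(\orbifold{O},\metric{g})}(x)$, a metric ball $\B^{(\orbifold{O},\metric{g})}(x,\rho)$ that avoids the singular locus. The previous proposition (existence of normal charts) gives a normal chart $(\B^{\real^n}(0,\rho),G,\phi)$ centered at $x$ on $(\orbifold{O},\metric{g})$, with $G\subset\mathrm{O}(\real^n)$. Because the chart is obtained by exponentiating from $x$, every element of $G$ fixes the origin, so the structural group coincides with its differential representation at $x$; hence by Lemma \ref{Orbifolds:Lem:LocalGroup} its conjugacy class in $\GL(\real^n)$ is $\Gamma^{\orbifold{O}}(x)$. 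As $x\in\orbifold{O}^{\reg}$, we have $\Gamma^{\orbifold{O}}(x)=\{\id_{\real^n}\}$, so $G$ is trivial and $\phi\colon\B^{\real^n}(0,\rho)\to\B^{(\orbifold{O},\metric{g})}(x,\rho)$ is a diffeomorphism. Consequently, every $y\in\B^{(\orbifold{O},\metric{g})}(x,\rho)$ inherits a manifold chart by restriction, forcing $\Gamma^{\orbifold{O}}(y)=\{\id_{\real^n}\}$ and $y\in\orbifold{O}^{\reg}$. This gives $\B^{(\orbifold{O},\metric{g})}(x,\rho)\subset\orbifold{O}^{\reg}$, whence $\dist^{(\orbifold{O},\metric{g})}(x,\orbifold{O}^{\sing})\geq\rho$. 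Letting $\rho\nearrow\inj^{(\orbifold{O},\metric{g})}(x)$ yields the claimed inequality.

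For the second assertion, assume $\orbifold{O}^{\sing}\neq\emptyset$ and pick $x_0\in\orbifold{O}^{\sing}$. Since $\orbifold{O}^{\reg}$ is dense in $\orbifold{O}$ (cited in the paragraph following Lemma \ref{Orbifolds:Lem:StratumManifold}) and the topology induced by $\dist^{(\orbifold{O},\metric{g})}$ agrees with that of $\X^{\orbifold{O}}$, we can find a sequence $(x_k)_{k\in\nat}\subset\orbifold{O}^{\reg}$ with $\dist^{(\orbifold{O},\metric{g})}(x_k,x_0)\to 0$. Then
\[
	\inj^{(\orbifold{O},\metric{g})}(x_k)\leq\dist^{(\orbifold{O},\metric{g})}(x_k,\orbifold{O}^{\sing})\leq\dist^{(\orbifold{O},\metric{g})}(x_k,x_0)\to 0,
\]
so $\inf\inj^{(\orbifold{O},\metric{g})}=0$. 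Since $\inj^{(\orbifold{O},\metric{g})}(x_0)>0$, continuity at $x_0$ would force $\inj^{(\orbifold{O},\metric{g})}(x_k)\to\inj^{(\orbifold{O},\metric{g})}(x_0)>0$, a contradiction; thus $\inj^{(\orbifold{O},\metric{g})}$ is discontinuous.

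The only subtle point is the identification of $G$ with the local group inside the normal chart; everything else is a short density/limit argument. I expect this to be the main obstacle only in the sense that it relies on carefully unwinding the definition of linear/normal chart to conclude that the differential action at $x$ coincides with the full $G$-action when $x$ is the chart's center.
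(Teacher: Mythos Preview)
Your proof is correct, but it takes a different route from the paper's. The paper does not spell out a proof; it simply remarks that the proposition follows from the preceding lemma (radial geodesics through $x$ are minimizing on $[0,1]$ whenever the initial vector lies in $\Omega^{(\orbifold{O},\metric{g})}$) together with the cited fact from \cite{Borzellino1993} that singular strata obstruct length minimization. Concretely, if some $y\in\orbifold{O}^{\sing}$ satisfied $\dist^{(\orbifold{O},\metric{g})}(x,y)<\rho<\inj^{(\orbifold{O},\metric{g})}(x)$, then the radial geodesic from $x$ through $y$ would remain minimizing slightly past $y$, contradicting Borzellino's obstruction.

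Your argument is more self-contained: you bypass the external obstruction result entirely by observing that in a normal chart centered at a regular point the structural group $G\subset\mathrm{O}(\real^n)$ acts linearly, so $\mathrm{d}_0\Phi_g=g$ and the monomorphism of Lemma~\ref{Orbifolds:Lem:Monomorphism} forces $G$ to be trivial; hence $\phi$ is a genuine manifold chart and $\B^{(\orbifold{O},\metric{g})}(x,\rho)\subset\orbifold{O}^{\reg}$. This is cleaner and uses only the definitions already set up in the paper, whereas the paper's route has the virtue of explaining \emph{why} the phenomenon occurs geometrically (minimizing geodesics cannot cross into the singular locus). Your treatment of the ``in particular'' clause via density of $\orbifold{O}^{\reg}$ and positivity of $\inj^{(\orbifold{O},\metric{g})}$ at singular points is fine and matches what the paper leaves implicit.
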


Fortunately, some restrictions of $\inj^{(\orbifold{O},\metric{g})}$ are continuous.
\begin{rmk}\label{ExponentialMap:Remark}
As $\orbifold{O}^{\reg}$ is an open submanifold of $\orbifold{O}$ and $\orbifold{O}^{\sing}$ is a union of closed submanifolds of $\orbifold{O}$, we conclude that $\left.\inj^{(\orbifold{O},\metric{g})}\right|_{\orbifold{O}^{\reg}}$ and $\left.\inj^{(\orbifold{O},\metric{g})}\right|_{\orbifold{O}^{\sing}}$ are continuous.
\end{rmk}

\subsection*{Weak convexity}
It is clasically known that any point on a Riemannian manifold has a strongly convex neighborhood. The following example shows that one cannot hope for the existence of strongly convex neighborhoods around singular points of a Riemannian orbifold $(\orbifold{O},\metric{g})$.

\begin{example}
Let $\orbifold{O}=\real^2/\{\pm\id_{\real^2}\}$; let $\metric{g}$ be the Riemannian metric on $\orbifold{O}$ induced by the Euclidean inner product and let $p\colon\real^2\to\orbifold{O}$ be the canonical projection. We claim that $0\in\orbifold{O}$ does not admit a strongly convex neighborhood on $(\orbifold{O},\metric{g})$. Indeed, given $(x,y)\in\real^2\setminus\{0\}$, the curves on $\real^2$ given for every $t\in[0,1]$ by $\widetilde{\gamma_1}(t)=(1-t)(x,y)+tz(-y,x)$ and $\widetilde{\gamma_2}(t)=(1-t)(x,y)+t(y,-x)$ project to distinct minimizing geodesics on $(\orbifold{O},\metric{g})$ which link $p(x,y)$ to $p(-y,x)$.
\end{example}

Due to the absence of strongly convex neighborhoods around singular points in a Riemannian orbifold, we introduce a weaker notion of convexity.

\begin{defn}
The set $A\subset\orbifold{O}$ is said to be \emph{weakly convex} on $(\orbifold{O},\metric{g})$ when the following implication occurs: if $x,y\in A$ and $\gamma\colon [0,1]\to(\orbifold{O},\metric{g})$ is a minimizing geodesic linking $x$ to $y$, then $\gamma([0,1])\subset A$.
\end{defn}

It turns out that metric balls on $(\orbifold{O},\metric{g})$ are weakly convex if their radius is sufficiently small, which is analogous to the fact that metric balls on Riemannian manifolds are strongly convex if their radius is sufficiently small. Before proving this result, consider the following preliminary lemma.

\begin{lem}\label{WeaklyConvex:Lemma:StrictlyConvex}
Fix $y\in\orbifold{O}$; let $(\widetilde{U},G,\phi)$ be an orbifold chart around $y$ on $\orbifold{O}$ and set $\widetilde{y}=\phi^{-1}(y)$. Suppose that
\begin{enumerate}
\item
	$\kappa\in\real$ is an upper bound for the sectional curvatures on $(\widetilde{U},\widetilde{\metric{g}})$ and
\item
	$\rho_y>0$ is such that $\B^{(\widetilde{U},\widetilde{\metric{g}})}(\widetilde{y},\rho_y)$ is strongly convex on $(\widetilde{U},\widetilde{\metric{g}})$ and $\rho_y<\kappa^{-1/2}\pi/4$ if $\kappa>0$.
\end{enumerate}
We conclude that if $0<\rho<\rho_y$ and $x\in\B^{(\orbifold{O},\metric{g})}(y,\rho)$, then
\begin{equation}\label{WeaklyConvex:Equation:StrictlyConvex}
	(\B^{(\orbifold{O},\metric{g})}(y,\rho),\metric{g})\ni z
	\mapsto
	\mathrm{dist}^{(\orbifold{O},\metric{g})}(x,z)^2\in [0,(2\rho)^2]
\end{equation}
is strictly convex.
\end{lem}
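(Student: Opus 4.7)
My plan is to reduce the statement to the classical Riemannian fact that on a strongly convex ball of a Riemannian manifold whose sectional curvatures are at most $\kappa$ and whose radius is smaller than $\kappa^{-1/2}\pi/4$ (when $\kappa>0$), the squared distance function from any interior point is strictly convex (a standard consequence of Rauch comparison). I begin with the observation that $\phi^{-1}(y)=\{\widetilde{y}\}$ forces $\widetilde{y}$ to be fixed by $G$, so the ball $\widetilde{B}:=\B^{(\widetilde{U},\widetilde{\metric{g}})}(\widetilde{y},\rho)$ is $G$-invariant and $\phi(\widetilde{B})=\B^{(\orbifold{O},\metric{g})}(y,\rho)=:B$.

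A short argument based on Proposition \ref{LengthSpace:Proposition:Borzellino} together with strong convexity of $\widetilde{B}$ yields the formula
\[
\dist^{(\orbifold{O},\metric{g})}(a,b)=\min_{g\in G}\dist^{(\widetilde{U},\widetilde{\metric{g}})}(\widetilde{a},g\widetilde{b}),\qquad a,b\in B,
\]
for any lifts $\widetilde{a},\widetilde{b}\in\widetilde{B}$, and shows moreover that every minimizing orbifold geodesic in $B$ lifts to a minimizing chart geodesic whose image is contained in $\widetilde{B}$. Given a minimizing orbifold geodesic $\gamma\colon[0,1]\to B$ with such a lift $\widetilde{\gamma}\colon[0,1]\to\widetilde{B}$, the formula rewrites the function of interest as
\[
\dist^{(\orbifold{O},\metric{g})}(x,\gamma(t))^2=\min_{g\in G}\dist^{(\widetilde{U},\widetilde{\metric{g}})}(g\widetilde{x},\widetilde{\gamma}(t))^2,
\]
a pointwise minimum of finitely many strictly convex functions of $t\in[0,1]$; the strict convexity of each summand follows from the classical chart-level fact applied to $g\widetilde{x}\in\widetilde{B}$.

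Since a minimum of strictly convex functions need not itself be convex, the principal obstacle is to produce a single lift $\widetilde{x}$ of $x$ whose associated chart squared distance $\widetilde{f}_{\widetilde{x}}:=\dist^{(\widetilde{U},\widetilde{\metric{g}})}(\widetilde{x},\,\cdot\,)^2$ realizes the minimum at both $\widetilde{\gamma}(0)$ and $\widetilde{\gamma}(1)$; granting this, the inequality $\dist^{(\orbifold{O},\metric{g})}(x,\gamma(\,\cdot\,))^2\leq\widetilde{f}_{\widetilde{x}}\circ\widetilde{\gamma}$ combined with equality at the endpoints and strict convexity of the upper bound yields the required strict convexity along $\gamma$. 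To secure such a compatible lift I would examine the Voronoi-type partition of $\widetilde{B}$ induced by the $G$-orbit of a reference lift of $x$, using the hypothesis $\rho<\rho_y$ together with strong convexity and the curvature bound to argue that the minimizer over $g$ is constant along $\widetilde{\gamma}$.
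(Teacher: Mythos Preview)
Your overall strategy coincides with the paper's: both first establish, via Karcher's Jacobi-field estimate, that on the strongly convex chart ball the map $\widetilde{z}\mapsto\dist^{(\widetilde{U},\widetilde{\metric{g}})}(\widetilde{x},\widetilde{z})^2$ is strictly convex, and both then attempt to descend to the orbifold by selecting a lift $\widetilde{\gamma}$ of the minimizing orbifold geodesic $\gamma$ along which the chart distance from a fixed lift $\widetilde{x}$ of $x$ equals the orbifold distance at every $t$. The paper simply asserts that such a lift can be chosen; you correctly isolate this as the principal difficulty and propose to justify it through the Voronoi partition of $\widetilde{B}$ by the $G$-orbit of $\widetilde{x}$, arguing that the minimizing $g\in G$ is constant along $\widetilde{\gamma}$.

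That last step does not go through. Take $\orbifold{O}=\real^2/\mathbb{Z}_3$ with the Euclidean metric (so $\kappa\le0$ and $\rho_y$ may be arbitrarily large), $y=0$, $\widetilde{x}=(1,0)$, and let $\widetilde{\gamma}(t)=(t\sqrt{3}/2,\,1-t/2)$, a chart segment of length $1$ from $(0,1)$ to $(\sqrt{3}/2,1/2)$ that projects to a \emph{minimizing} orbifold geodesic inside $\B^{(\orbifold{O},\metric{g})}(0,\rho)$ for any $\rho>1$. Writing $f_g(t):=|g\widetilde{x}-\widetilde{\gamma}(t)|^2$ one finds $f_{\id}(t)=2-(\sqrt3+1)t+t^2$ and $f_{g_{2\pi/3}}(t)=2-\sqrt3+(\sqrt3-1)t+t^2$, with the minimizer switching from $g_{2\pi/3}$ to $\id$ at $t=1/2$; a direct check shows that \emph{every} lift of $\gamma$ has its endpoints in distinct Voronoi cells of \emph{every} lift of $x$, so the constancy you hope to extract from strong convexity and the curvature bound is simply unavailable. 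Worse, your weaker endpoint-matching scheme also fails here: $F(t):=\dist^{(\orbifold{O},\metric{g})}(x,\gamma(t))^2=\min(f_{\id},f_{g_{2\pi/3}})$ satisfies $F(0)=F(1)=2-\sqrt3$ while $F(\tfrac12)=\tfrac74-\tfrac{\sqrt3}{2}>2-\sqrt3$, so $F$ is not even convex along this minimizing geodesic. Whatever ultimately salvages the downstream uses of this lemma (weak convexity of small balls, uniqueness of the Riemannian center of mass) will require more than the Voronoi-constancy plan you outline.
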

\begin{proof}
First, we want to show that
\begin{multline}\label{ConvenientRadius:Eqn:StrictlyConvex3}
	\text{given}~\widetilde{x}\in\B^{(\widetilde{U},\widetilde{\metric{g}})}(\widetilde{y},\rho_y),
	\\
	(\B^{(\widetilde{U},\widetilde{\metric{g}})}(\widetilde{y},\rho_y),\widetilde{\metric{g}})\ni\widetilde{z}
	\mapsto
	\dist^{(\widetilde{U},\widetilde{\metric{g}})}(\widetilde{x},\widetilde{z})^2\in[0,(2\rho_y)^2[
	~\text{is strictly convex}.
\end{multline}
For that purpose, we will argue as in \cite[Proof of Theorem 1.2]{Karcher1977}. Fix $\widetilde{x}\in\B^{(\widetilde{U},\widetilde{\metric{g}})}(\widetilde{y},\rho_y)$, let $\widetilde{\psi}$ be the diffeomorphism from a star-shaped subset of $\real^n$ to $\B^{(\widetilde{U},\widetilde{\metric{g}})}(\widetilde{y},\rho)$ induced by $\exp_{\widetilde{x}}^{(\widetilde{U},\widetilde{\metric{g}})}$ and let $\widetilde{\gamma}\colon [0,1]\to(\B^{(\widetilde{U},\widetilde{\metric{g}})}(\widetilde{y},\rho_y),\widetilde{\metric{g}})$ be a minimizing geodesic. Given $(s,t)\in ]1-\delta,1+\delta[\times [0,1]$, let
\[
	c(s,t)=\widetilde{\psi}\left(
		s\widetilde{\psi}^{-1}(\widetilde{\gamma}(t))
	\right)\in\B^{(\widetilde{U},\widetilde{\metric{g}})}(\widetilde{y},\rho_y)
\]
and $J_t(s)=\partial_2c(s,t)\in\real^n$, where $\delta>0$ is sufficiently small. Fix $t\in[0,1]$. We have
\begin{align*}
	\left.\dod{}{\tau}
		\dist^{(\widetilde{U},\widetilde{\metric{g}})}(\widetilde{x},\widetilde{\gamma}(\tau))^2
	\right|_{\tau=t}
	&=
	\left.\dod{}{\tau}
		\widetilde{\metric{g}}(\partial_1c(1,\tau),\partial_1c(1,\tau))
	\right|_{\tau=t};\\
	&=
	2\widetilde{\metric{g}}\left(
		\nabla^{(\widetilde{U},\widetilde{\metric{g}})}_{\widetilde{\gamma}'(t)}\partial_1c(1,t),\partial_1c(1,t)
	\right);\\
	&=
	2\int_0^1\widetilde{\metric{g}}\left(
		\nabla^{(\widetilde{U},\widetilde{\metric{g}})}_{J_t(s)}\partial_2c(s,t),\partial_1c(s,t)
	\right)\mathrm{d}s;\\
	&=
	2\widetilde{\metric{g}}(
		\partial_2c(s,t),\partial_1c(s,t)
	)
\end{align*}
and
\[
	\left.\dod[2]{}{\tau}
		\dist^{(\widetilde{U},\widetilde{\metric{g}})}(\widetilde{x},\widetilde{\gamma}(\tau))^2
	\right|_{\tau=t}
	=
	2\widetilde{\metric{g}}(\partial_2c(1,t),\partial_1\partial_2c(1,t))
	=
	2\widetilde{\metric{g}}(
		J_t(1),J_t'(1)
	),
\]
because $\widetilde{\gamma}=\partial_2c(1,\cdot)$ is a geodesic on $(\widetilde{U},\widetilde{\metric{g}})$. The mapping $J_t$ is a Jacobi field, so it follows from well-known estimates concerning Jacobi fields that
\[
	\left.\dod[2]{}{\tau}
		\dist^{(\widetilde{U},\widetilde{\metric{g}})}(\widetilde{x},\widetilde{\gamma}(\tau))^2
	\right|_{\tau=t}
	\geq
	\widetilde{\metric{g}}(
		\widetilde{\gamma}'(t),\widetilde{\gamma}'(t)
	)
\]
if $\kappa\leq 0$ and
\[
	\left.\dod[2]{}{\tau}
		\dist^{(\widetilde{U},\widetilde{\metric{g}})}(\widetilde{x},\widetilde{\gamma}(\tau))^2
	\right|_{\tau=t}
	\geq
	\frac{2\rho_y\sqrt{\kappa}}{\tan(2\rho_y)}
	\widetilde{\metric{g}}(
		\widetilde{\gamma}'(t),\widetilde{\gamma}'(t)
	)
\]
if $\kappa>0$ (see \cite[Appendix A]{Karcher1977}).

Let us prove that if $0<\rho<\rho_y$ and $x\in\B^{(\orbifold{O},\metric{g})}(y,\rho)$, then \eqref{WeaklyConvex:Equation:StrictlyConvex} is strictly convex. It suffices to show that if $\gamma\colon [0,1]\to(\B^{(\orbifold{O},\metric{g})}(y,\rho),\metric{g})$ is a non-constant minimizing geodesic, then
\[
	[0,1]\ni t\mapsto \dist^{(\orbifold{O},\metric{g})}(x,\gamma(t))^2\in [0,(2\rho)^2]
\]
is strictly convex. We have $\B^{(\orbifold{O},\metric{g})}(y,\rho)\subset U$, so we can fix a minimizing geodesic $\widetilde{\gamma}\colon [0,1]\to(\B^{(\widetilde{U},\widetilde{\metric{g}})}(\widetilde{y},\rho),\widetilde{\metric{g}})$ such that $\phi\circ\widetilde{\gamma}=\gamma$ and $\dist^{(\widetilde{U},\widetilde{\metric{g}})}(\widetilde{x},\widetilde{\gamma}(t))=\dist^{(\orbifold{O},\metric{g})}(x,\gamma(t))$ for every $t\in[0,1]$. It follows from \eqref{ConvenientRadius:Eqn:StrictlyConvex3} that
\[
	[0,1]\ni t\mapsto \dist^{(\widetilde{U},\widetilde{\metric{g}})}(\widetilde{x},\widetilde{\gamma}(t))^2\in [0,(2\rho)^2]
\]
is strictly convex, hence the conclusion.
\end{proof}

Now, we can prove that metric balls whose radius is sufficiently small are weakly convex.

\begin{prop}\label{WeaklyConvex:Proposition}
If $y\in\orbifold{O}$, $\rho_y>0$ are such that the conclusion of Lemma \ref{WeaklyConvex:Lemma:StrictlyConvex} holds and $0<\rho<\rho_y$, then $\B^{(\orbifold{O},\metric{g})}(y,\rho/3)$ is weakly convex on $(\orbifold{O},\metric{g})$.
\end{prop}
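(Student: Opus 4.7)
The plan is to mimic the classical Riemannian argument: first I would show that any minimizing geodesic joining two points of $\B^{(\orbifold{O},\metric{g})}(y,\rho/3)$ stays inside the larger ball $\B^{(\orbifold{O},\metric{g})}(y,\rho)$, where Lemma \ref{WeaklyConvex:Lemma:StrictlyConvex} is available, and then use (strict) convexity of $z \mapsto \dist^{(\orbifold{O},\metric{g})}(y,z)^2$ along that geodesic to force it back into the smaller ball.

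Concretely, I fix $x_1, x_2 \in \B^{(\orbifold{O},\metric{g})}(y, \rho/3)$ and a minimizing geodesic $\gamma \colon [0,1] \to (\orbifold{O},\metric{g})$ linking them, parametrized so that $\dist^{(\orbifold{O},\metric{g})}(x_1, \gamma(t)) = t\,\dist^{(\orbifold{O},\metric{g})}(x_1,x_2)$ for every $t \in [0,1]$ (a standard feature of minimizing geodesics in a length space). The triangle inequality gives $\dist^{(\orbifold{O},\metric{g})}(x_1,x_2) < 2\rho/3$, whence
\[
    \dist^{(\orbifold{O},\metric{g})}(y, \gamma(t)) \leq \dist^{(\orbifold{O},\metric{g})}(y, x_1) + \dist^{(\orbifold{O},\metric{g})}(x_1, \gamma(t)) < \frac{\rho}{3} + \frac{2\rho}{3} = \rho,
\]
so $\gamma([0,1]) \subset \B^{(\orbifold{O},\metric{g})}(y, \rho)$. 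This is precisely where the factor $1/3$, as opposed to $1/2$, matters: it keeps $\gamma$ inside the ball on which the preceding lemma applies.

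Next I would invoke Lemma \ref{WeaklyConvex:Lemma:StrictlyConvex} with the choice $x := y$ to obtain that $z \mapsto \dist^{(\orbifold{O},\metric{g})}(y, z)^2$ is strictly convex on $\B^{(\orbifold{O},\metric{g})}(y,\rho)$. Composing with the minimizing geodesic $\gamma$, the real-valued function $f(t) := \dist^{(\orbifold{O},\metric{g})}(y,\gamma(t))^2$ is convex on $[0,1]$, so
\[
    f(t) \leq (1-t)\,f(0) + t\,f(1) < (\rho/3)^2
\]
and hence $\gamma(t) \in \B^{(\orbifold{O},\metric{g})}(y, \rho/3)$ for every $t \in [0,1]$, which is the desired weak convexity. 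I do not expect a substantive obstacle: the genuine orbifold-specific content (strict convexity of the squared distance via a Jacobi-field estimate lifted through a chart) has already been carried out in Lemma \ref{WeaklyConvex:Lemma:StrictlyConvex}, so what remains is the routine length-space bookkeeping above; the only small point to verify is the constant-speed reparametrization of $\gamma$ rendering every subinterval minimizing, which is standard for length spaces.
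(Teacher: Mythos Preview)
Your proposal is correct and essentially identical to the paper's own proof: both use the triangle inequality to trap $\gamma$ in $\B^{(\orbifold{O},\metric{g})}(y,\rho)$, then invoke Lemma~\ref{WeaklyConvex:Lemma:StrictlyConvex} with $x=y$ so that $t\mapsto\dist^{(\orbifold{O},\metric{g})}(y,\gamma(t))^2$ is (strictly) convex and hence bounded above by its endpoint values. The only cosmetic difference is that the paper phrases the last step as ``the maximum of a strictly convex function on $[0,1]$ lies in $\{0,1\}$'' rather than writing out the convexity inequality.
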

\begin{proof}
Let us follow the line of argument in \cite[p. 186, Problem 6-5]{Lee2018}. Fix $x,z\in\B^{(\orbifold{O},\metric{g})}(y,\rho/3)$ and let $\gamma\colon [0,1]\to\orbifold{O}$ be a minimizing geodesic linking $x$ to $z$. We have $\dist^{(\orbifold{O},\metric{g})}(x,z)<2\rho/3$, so $\length^{(\orbifold{O},\metric{g})}(\gamma)<2\rho/3$. As $\dist^{(\orbifold{O},\metric{g})}(x,y)<\rho/3$, we obtain $\gamma([0,1])\subset\B^{(\orbifold{O},\metric{g})}(x,\rho)$. By hypothesis,
\[
	[0,1]\ni t
	\mapsto
	\mathrm{dist}^{(\orbifold{O},\metric{g})}(y,\gamma(t))^2\in [0,\rho^2]
\]
is strictly convex, so its maximum point is in $\{0,1\}$ and thus
\[
	\max_{0\leq t\leq 1}\mathrm{dist}^{(\orbifold{O},\metric{g})}(y,\gamma(t))<\rho/3,\]
which implies $\gamma([0,1])\subset\B^{(\orbifold{O},\metric{g})}(x,\rho/3)$.
\end{proof}

\subsection*{The Riemannian center of mass}
Suppose that $(M,\metric{g})$ is a complete Riemannian manifold with $\inj(M,\metric{g}):=\inf\inj^{(M,\metric{g})}>0$ and $\kappa\in\real$ is an upper bound for the sectional curvatures of $(M,\metric{g})$. It then follows from \cite[Theorem 2.1]{Afsari2011} that if $y\in M$, $\rho\in]0,\inj(M,\metric{g})/2[$ is such that $\rho<\kappa^{-1/2}\pi/2$ if $\kappa>0$ and $\nu$ is a probability measure on $M$ with $\supp\nu\subset\B^{(M,\metric{g})}(y,\rho)$, then
\[
	M\ni x\mapsto\int_{\B^{(M,\metric{g})}(y,\rho)} \dist^{(M,\metric{g})}(x,z)^2\mathrm{d}\nu(z)
\]
admits a unique minimum point $x_\nu\in\B^{(M,\metric{g})}(y,\rho)$, which we call the \emph{Riemannian center of mass} of $\nu$.

Our goal is to prove that under hypotheses similar to those in the previous paragraph, we can define the Riemannian center of mass for probability measures on the complete Riemannian orbifold $(\orbifold{O},\metric{g})$. Suppose that we can fix $\kappa\in\real$ such that
\begin{enumerate}[label=($\thesection.\arabic*$)]
\setcounter{enumi}{3}
\item \label{H1}
	if $\phi\colon\widetilde{U}\to\orbifold{O}$ is a chart map, then $\kappa$ is an upper bound for the sectional curvatures on $(\widetilde{U},\widetilde{\metric{g}})$
\end{enumerate}
and we can fix $\rho>0$ such that
\begin{enumerate}[label=($\thesection.\arabic*$)]
\setcounter{enumi}{4}
\item \label{H2}
	$3\rho<\kappa^{-1/2}\pi/4$ if $\kappa>0$ and
\item \label{H3}
	given $y\in\orbifold{O}$, there exists $\phi\colon\widetilde{U}\to U$, a chart map around $y$ on $\orbifold{O}$ such that $\B^{(\widetilde{U},\widetilde{\metric{g}})}(\widetilde{y},3\rho)$ is strongly convex on $(\widetilde{U},\widetilde{\metric{g}})$, where $\widetilde{y}:=\phi^{-1}(y)$.
\end{enumerate}
\setcounter{equation}{6}
In this context, the following proposition is the main result of this section.

\begin{prop}\label{CenterOfMass:Proposition}
If $y\in\orbifold{O}$ and $\nu$ is a probability measure on $\orbifold{O}$ supported on $\B^{(\orbifold{O},\metric{g})}(y,\rho)$, then
\begin{equation}\label{CenterOfMass:Eqn}
	\orbifold{O}\ni x\mapsto \int_{\B^{(\orbifold{O},\metric{g})}(y,\rho)} \dist^{(\orbifold{O},\metric{g})}(x,z)^2\mathrm{d}\nu(z),
\end{equation}
admits a unique minimum point $x_\nu\in\B^{(\orbifold{O},\metric{g})}(y,\rho)$, which we call the \emph{Riemannian center of mass} of $\nu$.
\end{prop}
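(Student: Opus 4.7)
The plan is to adapt the classical Karcher--Afsari argument for the Riemannian center of mass to the orbifold setting, using the strongly convex chart supplied by \ref{H3} to import strict convexity of the distance squared into $(\orbifold{O},\metric{g})$.

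I would first establish existence. The functional $F\colon\orbifold{O}\to\real$ in \eqref{CenterOfMass:Eqn} is continuous by dominated convergence together with continuity of $\dist^{(\orbifold{O},\metric{g})}$. Since $\supp\nu\subset\B^{(\orbifold{O},\metric{g})}(y,\rho)$, the triangle inequality yields $F(x)\geq\rho^2\geq F(y)$ whenever $\dist^{(\orbifold{O},\metric{g})}(x,y)\geq 2\rho$, so any minimizer of $F$ must lie in the closed bounded ball $\overline{\B^{(\orbifold{O},\metric{g})}(y,2\rho)}$, which is compact by completeness of the length space $(\orbifold{O},\dist^{(\orbifold{O},\metric{g})})$. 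Continuity then produces an attained minimum $x_\nu$.

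Next I would show strict convexity of $F$ along minimizing geodesics confined to small balls, together with weak convexity of $\B^{(\orbifold{O},\metric{g})}(y,\rho)$. Taking $\rho_y=3\rho$, conditions \ref{H1}--\ref{H3} verify the hypotheses of Lemma \ref{WeaklyConvex:Lemma:StrictlyConvex}; symmetry of $\dist^{(\orbifold{O},\metric{g})}$ then makes $x\mapsto\dist^{(\orbifold{O},\metric{g})}(x,z)^2$ strictly convex on $\B^{(\orbifold{O},\metric{g})}(y,r)$ for each $z\in\B^{(\orbifold{O},\metric{g})}(y,r)$ and each $r<3\rho$, and integration against the probability measure $\nu$ preserves strict convexity of $F$ along minimizing geodesics in such a ball. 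Proposition \ref{WeaklyConvex:Proposition} applied with $\rho_y=3\rho$ gives weak convexity of $\B^{(\orbifold{O},\metric{g})}(y,r)$ for every $r<\rho$, and taking the union over these $r$ shows that $\B^{(\orbifold{O},\metric{g})}(y,\rho)$ is itself weakly convex. Uniqueness of a minimum inside $\B^{(\orbifold{O},\metric{g})}(y,\rho)$ then follows immediately: any two distinct such minima would be connected by a minimizing geodesic lying in the ball, and strict convexity would forbid both endpoints from minimizing $F$.

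The main obstacle is showing that $x_\nu$ actually lies in $\B^{(\orbifold{O},\metric{g})}(y,\rho)$ rather than merely in $\overline{\B^{(\orbifold{O},\metric{g})}(y,2\rho)}$, which I would handle by a first-variation argument performed in the strongly convex chart of \ref{H3}. Assuming for contradiction that $\dist^{(\orbifold{O},\metric{g})}(x_\nu,y)\geq\rho$, fix a chart $\phi\colon\widetilde{U}\to U$ in which $\B^{(\widetilde{U},\widetilde{\metric{g}})}(\widetilde{y},3\rho)$ is strongly convex, pick a lift $\widetilde{x}_\nu$ with $\dist^{(\widetilde{U},\widetilde{\metric{g}})}(\widetilde{x}_\nu,\widetilde{y})=\dist^{(\orbifold{O},\metric{g})}(x_\nu,y)$, let $\widetilde{\gamma}\colon[0,1]\to\B^{(\widetilde{U},\widetilde{\metric{g}})}(\widetilde{y},3\rho)$ be the minimizing geodesic from $\widetilde{x}_\nu$ to $\widetilde{y}$, and, for each $z\in\supp\nu$, let $\widetilde{z}$ be the lift in $\B^{(\widetilde{U},\widetilde{\metric{g}})}(\widetilde{y},\rho)$ realizing $\dist^{(\widetilde{U},\widetilde{\metric{g}})}(\widetilde{x}_\nu,\widetilde{z})=\dist^{(\orbifold{O},\metric{g})}(x_\nu,z)$. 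The first variation of arc length in the chart then yields
\[
	\left.\frac{\mathrm{d}}{\mathrm{d}t}\right|_{t=0^+}F\bigl(\phi(\widetilde{\gamma}(t))\bigr)=-2\int\widetilde{\metric{g}}_{\widetilde{x}_\nu}\bigl(\widetilde{\gamma}'(0),(\exp^{(\widetilde{U},\widetilde{\metric{g}})}_{\widetilde{x}_\nu})^{-1}(\widetilde{z})\bigr)\,\mathrm{d}\nu(z),
\]
and a Toponogov-type comparison estimate in the curvature-bounded strongly convex chart ball furnishes the lower bound $\widetilde{\metric{g}}_{\widetilde{x}_\nu}(\widetilde{\gamma}'(0),(\exp^{(\widetilde{U},\widetilde{\metric{g}})}_{\widetilde{x}_\nu})^{-1}(\widetilde{z}))\geq\dist^{(\orbifold{O},\metric{g})}(x_\nu,y)\bigl(\dist^{(\orbifold{O},\metric{g})}(x_\nu,y)-\dist^{(\orbifold{O},\metric{g})}(y,z)\bigr)$; since $\supp\nu$ is compact in the open ball $\B^{(\orbifold{O},\metric{g})}(y,\rho)$, the right-hand side is strictly positive for every $z\in\supp\nu$, producing a strictly negative derivative and thereby contradicting minimality of $x_\nu$.
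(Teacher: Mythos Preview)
Your overall architecture matches the paper's: existence by continuity and compactness, uniqueness from strict convexity of $F$ on the weakly convex ball $\B^{(\orbifold{O},\metric{g})}(y,\rho)$ via Lemma~\ref{WeaklyConvex:Lemma:StrictlyConvex} and Proposition~\ref{WeaklyConvex:Proposition}, and a separate argument forcing the minimizer into the open ball. The first two steps are essentially the paper's, and your union-over-radii argument for weak convexity of the full ball is a clean way to extract that from Proposition~\ref{WeaklyConvex:Proposition}.

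The gap is in the localization step. You invoke a ``Toponogov-type comparison'' to obtain
\[
\widetilde{\metric{g}}_{\widetilde{x}_\nu}\bigl(\widetilde{\gamma}'(0),(\exp^{(\widetilde{U},\widetilde{\metric{g}})}_{\widetilde{x}_\nu})^{-1}(\widetilde{z})\bigr)\;\geq\;a(a-c),
\]
with $a=\dist(x_\nu,y)$ and $c=\dist(y,z)$. Two problems: Toponogov requires a \emph{lower} sectional curvature bound, whereas \ref{H1} gives only an upper bound $\kappa$; and even if you had a lower bound, Toponogov makes angles \emph{larger} than in the model, i.e.\ $\cos\theta$ \emph{smaller}, which goes the wrong way. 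The inequality you wrote is the Euclidean one (equivalent to the triangle inequality $b\geq|a-c|$ via the flat law of cosines), and under only curvature $\leq\kappa$ with $\kappa>0$ it need not hold. A second, smaller issue: your first-variation identity should be an inequality, because $\dist^{(\orbifold{O},\metric{g})}(\phi(\widetilde{\gamma}(t)),z)$ is the minimum over lifts of $z$ and your chosen $\widetilde{z}$ only realizes that minimum at $t=0$; fortunately the inequality goes the right way for the contradiction.

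Both issues are repairable. For the contradiction you only need each integrand to be strictly positive, i.e.\ the angle $\theta$ at $\widetilde{x}_\nu$ to be acute. That \emph{does} follow from the upper bound: CAT($\kappa$) (Rauch/Alexandrov with curvature $\leq\kappa$) gives $\theta\leq\theta_\kappa$, and in $\model_\kappa$ the comparison triangle with $c<a$ and all sides below $\pi/(2\sqrt{\kappa})$ (guaranteed by \ref{H2}) has $\theta_\kappa<\pi/2$. The paper sidesteps this comparison entirely: its Lemma~\ref{CenterOfMass:Lemma:InteriorMinimum} first disposes of $\dist(x,y)\geq 2\rho$ by the triangle inequality, handles $\rho<\dist(x,y)<2\rho$ by lifting to the strongly convex chart and quoting Afsari's displacement construction, and rules out the boundary sphere by a monotonicity argument that uses only the weak convexity already established (Proposition~\ref{WeaklyConvex:Proposition}) rather than any triangle comparison.
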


First, we need to prove that a minimum point of \eqref{CenterOfMass:Eqn} is necessarily in $\B^{(\orbifold{O},\metric{g})}(y,\rho)$.
\begin{lem}\label{CenterOfMass:Lemma:InteriorMinimum}
If $y\in\orbifold{O}$, $\nu$ is a probability measure on $\orbifold{O}$ supported on $\B^{(\orbifold{O},\metric{g})}(y,\rho)$ and $x\in\orbifold{O}$ is a minimum point of \eqref{CenterOfMass:Eqn}, then $x\in\B^{(\orbifold{O},\metric{g})}(y,\rho)$.
\end{lem}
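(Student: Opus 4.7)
The strategy is to handle the ``far'' case $\dist^{(\orbifold{O},\metric{g})}(x,y)\geq 2\rho$ by a direct triangle-inequality comparison with $F(y)$, and to reduce the ``intermediate'' case $\rho\leq\dist^{(\orbifold{O},\metric{g})}(x,y)<2\rho$ to the manifold Riemannian center-of-mass theorem recalled just before Proposition \ref{CenterOfMass:Proposition}, by lifting to the chart supplied by hypothesis \ref{H3}. Set $d:=\dist^{(\orbifold{O},\metric{g})}(x,y)$, denote the functional in \eqref{CenterOfMass:Eqn} by $F$, and note that $\supp\nu$ is closed in $\B^{(\orbifold{O},\metric{g})}(y,\rho)$ and compact (by completeness together with the length-space Hopf--Rinow theorem), so $r_0:=\sup_{z\in\supp\nu}\dist^{(\orbifold{O},\metric{g})}(y,z)<\rho$.

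Arguing by contradiction, suppose $d\geq\rho$. If $d\geq 2\rho$, then for every $z\in\supp\nu$ the triangle inequality gives
\[
	\dist^{(\orbifold{O},\metric{g})}(x,z)\geq d-\dist^{(\orbifold{O},\metric{g})}(y,z)>2\rho-\rho=\rho>\dist^{(\orbifold{O},\metric{g})}(y,z),
\]
so $F(x)>F(y)$ after integration against $\nu$, contradicting the minimality of $x$. Hence $\rho\leq d<2\rho<3\rho$, and by \ref{H3} we may fix a chart $\phi\colon\widetilde{U}\to U$ around $y$ with $\widetilde{y}:=\phi^{-1}(y)$ fixed by the structural group $G$ and with $\B^{(\widetilde{U},\widetilde{\metric{g}})}(\widetilde{y},3\rho)$ strongly convex in $(\widetilde{U},\widetilde{\metric{g}})$; after a harmless shrinking we may take $U=\B^{(\orbifold{O},\metric{g})}(y,3\rho)$, so that any lift $\widetilde{x}\in\phi^{-1}(x)$ automatically satisfies $\dist^{(\widetilde{U},\widetilde{\metric{g}})}(\widetilde{x},\widetilde{y})=d<3\rho$.

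Because $\widetilde{y}$ is $G$-fixed and $G$ acts isometrically, $\B^{(\widetilde{U},\widetilde{\metric{g}})}(\widetilde{y},\rho)$ is $G$-invariant, and since for each $z\in\supp\nu$ some element of $\phi^{-1}(z)$ has $\widetilde{U}$-distance $\dist^{(\orbifold{O},\metric{g})}(y,z)<\rho$ to $\widetilde{y}$, the whole orbit $\phi^{-1}(z)$ lies in this ball. A measurable selection (e.g.\ picking the lift nearest to $\widetilde{x}$ with a fixed tie-breaking rule on $G$) produces a Borel map $z\mapsto\widetilde{z}(z)\in\phi^{-1}(z)\cap\B^{(\widetilde{U},\widetilde{\metric{g}})}(\widetilde{y},\rho)$ with $\dist^{(\widetilde{U},\widetilde{\metric{g}})}(\widetilde{x},\widetilde{z}(z))=\dist^{(\orbifold{O},\metric{g})}(x,z)$. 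Setting $\widetilde{\mu}:=\widetilde{z}_*\nu$ and introducing the manifold functional
\[
	\widetilde{F}(\widetilde{x}'):=\int\dist^{(\widetilde{U},\widetilde{\metric{g}})}(\widetilde{x}',\widetilde{z})^2\mathrm{d}\widetilde{\mu}(\widetilde{z}),
\]
the obvious bound $\dist^{(\widetilde{U},\widetilde{\metric{g}})}(\widetilde{x}',\widetilde{z}(z))\geq\dist^{(\orbifold{O},\metric{g})}(\phi(\widetilde{x}'),z)$ yields $\widetilde{F}(\widetilde{x}')\geq F(\phi(\widetilde{x}'))\geq F(x)=\widetilde{F}(\widetilde{x})$ for every $\widetilde{x}'\in\widetilde{U}$, the last equality holding by the choice of lifts; hence $\widetilde{x}$ is a global minimum of $\widetilde{F}$ on $\widetilde{U}$.

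The conditions \ref{H1}--\ref{H3} translate into Afsari's hypotheses for $(\widetilde{U},\widetilde{\metric{g}})$, so \cite[Theorem~2.1]{Afsari2011} places the unique global minimum of $\widetilde{F}$ inside $\B^{(\widetilde{U},\widetilde{\metric{g}})}(\widetilde{y},\rho)$. Consequently $\widetilde{x}\in\B^{(\widetilde{U},\widetilde{\metric{g}})}(\widetilde{y},\rho)$ and $x=\phi(\widetilde{x})\in\B^{(\orbifold{O},\metric{g})}(y,\rho)$, contradicting $d\geq\rho$. The main technical point in carrying out the argument is the verification that Afsari's hypotheses transfer cleanly to $(\widetilde{U},\widetilde{\metric{g}})$, which will presumably require restricting attention to the strongly convex ball $\B^{(\widetilde{U},\widetilde{\metric{g}})}(\widetilde{y},3\rho)$ in lieu of global completeness of the chart.
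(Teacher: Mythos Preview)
Your overall plan is sound and shares its skeleton with the paper: both handle $\dist^{(\orbifold{O},\metric{g})}(x,y)\geq 2\rho$ by the triangle inequality (comparing with $F(y)$), and both lift to the chart furnished by \ref{H3} for the intermediate range. The execution, however, differs in a meaningful way.

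The paper never lifts the measure $\nu$. Instead, for $\rho<\dist^{(\orbifold{O},\metric{g})}(x,y)<2\rho$ it invokes only the \emph{pointwise} part of Afsari's argument: given a single lift $\widetilde{x}$ of $x$, there is a point $\widetilde{x}'\in\B^{(\widetilde{U},\widetilde{\metric{g}})}(\widetilde{y},\rho)$ with $\dist^{(\widetilde{U},\widetilde{\metric{g}})}(\widetilde{x}',\widetilde{z})<\dist^{(\widetilde{U},\widetilde{\metric{g}})}(\widetilde{x},\widetilde{z})$ for \emph{every} $\widetilde{z}\in\B^{(\widetilde{U},\widetilde{\metric{g}})}(\widetilde{y},\rho)$; projecting $\widetilde{x}'$ down then beats $x$ for all $z$ in the support. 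This only uses the strong convexity of $\B^{(\widetilde{U},\widetilde{\metric{g}})}(\widetilde{y},3\rho)$ and the curvature bound, so the completeness issue you flag simply does not arise. The paper then treats the boundary case $\dist^{(\orbifold{O},\metric{g})}(x,y)=\rho$ by a separate monotonicity argument along the geodesic from $y$ to $x$, using Proposition~\ref{WeaklyConvex:Proposition}.

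Your route---push $\nu$ forward to $\widetilde{\mu}$ via a measurable selection, show $\widetilde{x}$ globally minimizes $\widetilde{F}$, and then appeal to \cite[Theorem~2.1]{Afsari2011}---is conceptually clean and absorbs the boundary case automatically, but the acknowledged gap is real: Afsari's theorem is stated for complete manifolds, and $(\widetilde{U},\widetilde{\metric{g}})$ is not complete. You would have to reproduce the relevant part of Afsari's proof inside the strongly convex ball rather than cite the theorem as a black box. That is doable (it is precisely the pointwise step the paper uses), but once you do it you will find you no longer need the lifted measure or the measurable selection at all. In short, the paper's approach is the cheaper way to close the gap you identified.
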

\begin{proof}
This proof is similar to \cite[Proof of Theorem 2.1]{Afsari2011}.

We claim that $x\in\overline{\B^{(\orbifold{O},\metric{g})}(y,\rho)}$. In fact, it suffices to prove that if $x\in\orbifold{O}\setminus\overline{\B^{(\orbifold{O},\metric{g})}(y,\rho)}$, then we can fix $x'\in\B^{(\orbifold{O},\metric{g})}(y,\rho)$ which guarantees that $\dist^{(\orbifold{O},\metric{g})}(x',z)<\dist^{(\orbifold{O},\metric{g})}(x,z)$ for every $z\in\B^{(\orbifold{O},\metric{g})}(y,\rho)$. If $\dist^{(\orbifold{O},\metric{g})}(x,y)\geq 2\rho$, then it follows from the triangle inequality that it suffices to take $x'=y$. Now, consider the case $\rho<\dist^{(\orbifold{O},\metric{g})}(x,y)<2\rho$. Given $z\in\B^{(\orbifold{O},\metric{g})}(y,\rho)$, it holds that
\[
	\dist^{(\orbifold{O},\metric{g})}(x,z)=\min\{
		\dist^{(\widetilde{U},\widetilde{\metric{g}})}(\widetilde{x},\widetilde{z}):\widetilde{x}\in\phi^{-1}(x),\widetilde{z}\in\phi^{-1}(z)
	\},
\]
so we only have to show that given $\widetilde{x}\in\phi^{-1}(x)$, there exists $\widetilde{x}'\in\B^{(\widetilde{U},\widetilde{\metric{g}})}(\widetilde{y},\rho)$ such that $\dist^{(\widetilde{U},\widetilde{\metric{g}})}(\widetilde{x}',\widetilde{z})<\dist^{(\widetilde{U},\widetilde{\metric{g}})}(\widetilde{x},\widetilde{z})$ for any $\widetilde{z}\in\B^{(\widetilde{U},\widetilde{\metric{g}})}(\widetilde{y},\rho)$, which can be proved by arguing as on \cite[p. 662]{Afsari2011}.

We already proved that $x\in\overline{\B^{(\orbifold{O},\metric{g})}(y,\rho)}$, so it only remains to show that $x\not\in\partial(\B^{(\orbifold{O},\metric{g})}(y,\rho))$. Given $z,w\in\orbifold{O}$, let $\gamma_{z,w}\colon [0,1]\to\orbifold{O}$ be a minimal geodesic on $(\orbifold{O},\metric{g})$ linking $z$ to $w$. Let us argue similarly as on \cite[p. 661]{Afsari2011} to prove that if $x\in\partial(\B^{(\orbifold{O},\metric{g})}(y,\rho))$, then
\[
	[T,1]\ni t
	\mapsto	
	\int_{\B^{(\orbifold{O},\metric{g})}(y,\rho)} \dist^{(\orbifold{O},\metric{g})}(\gamma_{y,x}(t),w)^2|u(w)|\mathrm{d}\mu^{(\orbifold{O},\metric{g})}(w)
\]
is strictly increasing for a certain $T\in]0,1[$. Indeed, fix $T\in]0,1[$ for which
\[
	\supp u
	\subset
	\B^{(\orbifold{O},\metric{g})}\left(y,\dist^{(\orbifold{O},\metric{g})}(\gamma_{y,x}(T),y)\right)
	\subset
	\B^{(\orbifold{O},\metric{g})}(y,\rho).
\]
If $T\leq s<1$ and $z\in\B^{(\orbifold{O},\metric{g})}\left(y,\dist^{(\orbifold{O},\metric{g})}(\gamma_{y,x}(T),y)\right)$, then
\[
	\gamma_{z,\gamma_{y,x}(s)}(]0,1[)\subset\B^{(\orbifold{O},\metric{g})}\left(y,\dist^{(\orbifold{O},\metric{g})}(\gamma_{y,x}(T),y)\right)
\]
due to Proposition \ref{WeaklyConvex:Proposition} and thus $\metric{g}(\gamma_{z,\gamma_{y,x}(s)}'(1^-),\gamma_{y,x}'(s))>0$, so
\[
	[T,1]\ni t\mapsto\dist^{(\orbifold{O},\metric{g})}(\gamma_{y,x}(t),z)
\]
is strictly increasing.
\end{proof}

Now, we just need to prove the existence and uniqueness of the minimum point to conclude.

\begin{proof}[Proof of Proposition \ref{CenterOfMass:Proposition}]
The function \eqref{CenterOfMass:Eqn} is continuous and $\orbifold{O}$ is compact, so it admits a minimum point. Due to Lemma \ref{CenterOfMass:Lemma:InteriorMinimum}, the minima of \eqref{CenterOfMass:Eqn} are in $\B^{(\orbifold{O},\metric{g})}(y,\rho)$, so it suffices to prove that
\begin{equation}\label{CenterOfMass:Equation:Auxiliary}
	(\B^{(\orbifold{O},\metric{g})}(y,\rho),\metric{g})\ni x\mapsto \int_{\B^{(M,\metric{g})}(y,\rho)} \dist^{(M,\metric{g})}(x,z)^2\mathrm{d}\nu(z)
\end{equation}
has a unique minimum point (note that the domain of \eqref{CenterOfMass:Equation:Auxiliary} is different from the domain of \eqref{CenterOfMass:Eqn}). It is a corollary of Lemma \ref{WeaklyConvex:Lemma:StrictlyConvex} that given $x\in\B^{(\orbifold{O},\metric{g})}(y,\rho)$,
\[
	(\B^{(\orbifold{O},\metric{g})}(y,\rho),\metric{g})\ni z
	\mapsto
	\mathrm{dist}^{(\orbifold{O},\metric{g})}(x,z)^2\in [0,(2\rho)^2[
\]
is strictly convex, so it follows from the discussion in \cite[Section 3.2.1]{Boyd2004} that \eqref{CenterOfMass:Equation:Auxiliary} is also is strictly convex. The set $\B^{(\orbifold{O},\metric{g})}(x,\rho)$ is weakly convex on $(\orbifold{O},\metric{g})$ due to Proposition \ref{WeaklyConvex:Proposition}, so the result follows from Proposition \ref{LengthSpaces:Proposition:StrictlyConvex}.
\end{proof}

\section{Preliminaries}\label{Preliminaries}

In this section, we develop the technical preliminaries to prove Theorem \ref{Introduction:Theorem:ToProve} in the context presented at the introduction.

\subsection*{The \emph{convenient radius} $\rho$}
If we have $\orbifold{O}^{\sing}\neq\emptyset$, then Proposition \ref{ExponentialMap:Proposition:DiscontinuousInj} implies $\inf\inj^{(\orbifold{O},\metric{g})}=0$, so we cannot fix $\rho\in]0,\inf\inj^{(\orbifold{O},\metric{g})}[$ as in \cite[Remark 2.7]{Benci2007}. Our goal then becomes fixing an alternative \emph{convenient radius} $\rho>0$ that will ensure that metric balls on $(\orbifold{O},\metric{g})$ with radius $\rho$ have convenient geometric properties in a sense that will soon become clear.

Consider the smooth fiber bundle over $\orbifold{O}^{\sing}$ with fiber
\[
	N_x:=\{v\in \T_x\orbifold{O}: v\perp^{(\orbifold{O},\metric{g})}\T_x\orbifold{O}^{\sing}\}.
\]
Let us use the fiber bundle $N$ to construct a topological tubular neighborhood of $\orbifold{O}^{\sing}$ on $\orbifold{O}$.

\begin{lem}\label{ConvenientRadius:Lemma:Homeomorphism}
There exists $\rho>0$ such that
\[
	\left\{
		w\in N: \metric{g}(w,w)<(3\rho)^2
	\right\}
	\ni
	v
	\mapsto
	\exp^{(\orbifold{O},\metric{g})}(v)
	\in
	\B^{(\orbifold{O},\metric{g})}(\orbifold{O}^{\sing},3\rho)
\]
is a homeomorphism.
\end{lem}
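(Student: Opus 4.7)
My plan follows the classical tubular-neighborhood theorem, adapted to the orbifold stratified setting. The three main steps are: (a) establish injectivity of $\exp^{(\orbifold{O},\metric{g})}$ locally in linear orbifold charts around each $x \in \orbifold{O}^{\sing}$; (b) use compactness of $\orbifold{O}^{\sing}$ together with continuity of $\left.\inj^{(\orbifold{O},\metric{g})}\right|_{\orbifold{O}^{\sing}}$ (Remark \ref{ExponentialMap:Remark}) to upgrade to a uniform radius $\rho>0$; and (c) upgrade local to global injectivity by a sequence-and-contradiction argument. Surjectivity onto $\B^{(\orbifold{O},\metric{g})}(\orbifold{O}^{\sing},3\rho)$ will follow from a nearest-point projection and the first variation formula in a local lift, while continuity of the inverse is inherited from the openness of the diffeomorphisms produced in (a).

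In step (a), fix $x\in\orbifold{O}^{\sing}$, take a linear chart $(\real^n,G_x,\phi_x)$ centered at $x$, and set $V_x:=(\real^n)^{G_x}$. As in the proof of Lemma \ref{Orbifolds:Lem:StratumManifold}, $V_x$ corresponds locally to the stratum $\Sigma^{\orbifold{O}}(x)$ through $x$, and the $G_x$-invariant orthogonal complement $V_x^{\perp}$ with respect to $\widetilde{\metric{g}}_x:=\phi_x^*\metric{g}$ represents $N_x$. The classical tubular neighborhood theorem on the Riemannian manifold $(\widetilde{U},\widetilde{\metric{g}}_x)$ produces $r_x>0$ on which
\[
	(\widetilde{p},\widetilde{w})\in V_x\times V_x^{\perp},\ |\widetilde{p}|,|\widetilde{w}|<r_x
	\ \Longmapsto\
	\exp^{(\widetilde{U},\widetilde{\metric{g}}_x)}(\widetilde{p},\widetilde{w})\in\widetilde{U}
\]
is a diffeomorphism onto an open neighborhood of $\widetilde{x}$. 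Since $G_x$ preserves both $V_x$ and $V_x^{\perp}$ and the exponential is $G_x$-equivariant, passing to the orbifold quotient yields injectivity of $\exp^{(\orbifold{O},\metric{g})}$ on the corresponding tube in $N$ around $x$. Step (b) then covers $\orbifold{O}^{\sing}$ (closed, hence compact, in $\orbifold{O}$) by finitely many such tubes and chooses $\rho>0$ so that $3\rho$ is below each $r_x$ in the finite subcover and below $\min_{\orbifold{O}^{\sing}}\inj^{(\orbifold{O},\metric{g})}$, which is positive by Remark \ref{ExponentialMap:Remark}. In step (c), if no such uniform $\rho$ worked, one could extract $v_k\neq v_k'\in N$ with $\metric{g}(v_k,v_k),\metric{g}(v_k',v_k')\to 0$ and $\exp^{(\orbifold{O},\metric{g})}(v_k)=\exp^{(\orbifold{O},\metric{g})}(v_k')$; along subsequences the base points converge to a common limit $z\in\orbifold{O}^{\sing}$ (forced by $\exp(v_k)=\exp(v_k')\to z$), and for large $k$ both $v_k$ and $v_k'$ lie in the local tube around $z$ where injectivity holds, a contradiction.

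The principal obstacle is the stratified nature of $\orbifold{O}^{\sing}$: different strata have different dimensions, so $N$ is not a smooth vector bundle of constant rank and the rank of $N_x$ jumps as $x$ crosses from a stratum into the closure of a lower-dimensional one, which prevents a direct invocation of the manifold tubular-neighborhood theorem. The linear-chart argument dissolves this issue because the $G_x$-fixed set $V_x$ canonically encodes the stratum structure through $x$, and compactness of $\orbifold{O}^{\sing}$ then bypasses the need to describe $N$ globally as a single smooth bundle. The remaining surjectivity relies on the fact that any $q\in\B^{(\orbifold{O},\metric{g})}(\orbifold{O}^{\sing},3\rho)$ admits a nearest point $x\in\orbifold{O}^{\sing}$; the first variation formula, applied in a local lift, forces the initial velocity of any minimizing geodesic from $x$ to $q$ to lie in $N_x$, producing the required preimage.
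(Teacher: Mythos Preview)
Your overall strategy coincides with the paper's: both reduce to a local tubular-neighborhood statement in a linear chart around each $x\in\orbifold{O}^{\sing}$ and then invoke compactness of $\orbifold{O}^{\sing}$ together with continuity of $\left.\inj^{(\orbifold{O},\metric{g})}\right|_{\orbifold{O}^{\sing}}$ to pass to a uniform radius. The paper compresses your steps (b)--(c) into the single clause ``so it suffices to prove\ldots'' and then handles the local step exactly as you do (invertibility of the differential of the normal exponential in a lift, followed by the equivariant local diffeomorphism argument); your version is more explicit about the local-to-global passage and also supplies the surjectivity argument via nearest-point projection, which the paper defers to Lemma~\ref{ConvenientRadius:Lemma:ClosestPoint}.

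There is, however, a genuine gap in step (c), and the paper's reduction shares it. Your step (a) yields injectivity only on the tube in $N$ over the \emph{single stratum} $\Sigma^{\orbifold{O}}(x)$ through $x$ (the piece parametrized by $V_x=(\real^n)^{G_x}$), not over all of $\orbifold{O}^{\sing}$ near $x$. When in step (c) the base points of $v_k,v_k'$ converge to some $z\in\orbifold{O}^{\sing}$, they may lie in strata different from $\Sigma^{\orbifold{O}}(z)$, so the local result from (a) centred at $z$ does not apply to them. Concretely, let $\mathbb{Z}_p\times\mathbb{Z}_q$ act on $\real^4=\real^2\times\real^2$ by independent rotations (an orientation-preserving local model); then $\orbifold{O}^{\sing}$ is the image of $(\{0\}\times\real^2)\cup(\real^2\times\{0\})$, and for every small $a>0$ the regular point $[(a,0,a,0)]$ is reached by $\exp^{(\orbifold{O},\metric{g})}$ from each of $[(0,0,a,0)]$, $[(a,0,0,0)]$ and $[0]$ via normal vectors of length at most $a\sqrt{2}$, so the map in the lemma fails to be injective for every $\rho>0$. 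The statement is therefore problematic whenever $\orbifold{O}^{\sing}$ has several strata whose closures meet. For the application in Section~\ref{ProofA} only $N|_{\Z^{\orbifold{O}}}$ is needed, and since each stratum contained in $\Z^{\orbifold{O}}$ is a closed submanifold whose closure meets no other stratum of $\Z^{\orbifold{O}}$, your argument goes through verbatim once $\orbifold{O}^{\sing}$ is replaced by $\Z^{\orbifold{O}}$ throughout.
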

\begin{proof}
We know that $\orbifold{O}^{\sing}$ is a compact subset of $\orbifold{O}$ and the restriction of $\inj^{(\orbifold{O},\metric{g})}$ to $\orbifold{O}^{\sing}$ is continuous (see Remark \ref{ExponentialMap:Remark}), so it suffices to prove that if $\Sigma$ is a singular stratum of $\orbifold{O}$ and $x\in\Sigma$, then there exists $\rho_x>0$ such that
\[
	\left\{
		w\in \left.N\right|_{\Sigma\cap\B^{(\orbifold{O},\metric{g})}(x,\rho_x)}: \metric{g}(w,w)<\rho_x^2
	\right\}
	\ni
	v
	\mapsto
	\exp^{(\orbifold{O},\metric{g})}(v)
	\in
	\orbifold{O}
\]
is a homeomorphism with its image. Let $k=\dim\Sigma$; let $(\real^n,G,\phi)$ be a linear chart adapted to $\Sigma$ and centered at $x$ on $\orbifold{O}$ and let $E=\phi^*N$, i.e., $E$ is the vector sub-bundle of
\[
	\left.\T\real^n\right|_{\real^k\times\{0_{\real^{n-k}}\}}
	=
	(\real^k\times\{0_{\real^{n-k}}\})\times\real^n
\]
with rank $(n-k)$ and fiber
\[
	E_{\widetilde{y}}:=\{
		v\in\real^n: v\perp^{(\real^n,\widetilde{\metric{g}}_{\widetilde{y}})}\real^k\times\{0_{\real^{n-k}}\}
	\}
\]
for any $\widetilde{y}\in\real^k\times\{0_{\real^{n-k}}\}=\phi^{-1}(\Sigma)$. We can argue as in the proof of (2) in \cite[Proposition 5.5.1]{Petersen2016} to prove that the mapping
\[
	E\ni (\widetilde{y},v)\mapsto\psi(\widetilde{y},v):=\exp^{(\real^n,\widetilde{\metric{g}})}(\widetilde{y},v)\in\real^n.
\]
is such that $\mathrm{d}\psi_{(0,0)}$ is invertible. At this point, it suffices to argue as in the proof of \cite[Proposition 6.8]{Borzellino2008} to conclude.
\end{proof}

Due to compactness, we can fix $\kappa\in\real$ such that \ref{H1} holds and we can define the notion of a convenient radius already.

\begin{defn}\label{ConvenientRadius:Definition:Second}
Fix a \emph{convenient radius} $\rho\in]0,\inf_{\Z^{\orbifold{O}}}\inj^{(\orbifold{O},\metric{g})}[$ such that \ref{H2}, \ref{H3} and the conclusion of Lemma \ref{ConvenientRadius:Lemma:Homeomorphism} hold.
\end{defn}

In fact, if $x\in\orbifold{O}$ is $3\rho$-close to $\orbifold{O}^{\sing}$, then it admits a unique closest point on $\orbifold{O}^{\sing}$. As the singular strata of $\orbifold{O}$ are totally geodesic submanifolds of $(\orbifold{O},\metric{g})$, it turns out that the proof of this result is very similar to the proof of \cite[Theorem 5.6.21]{Petersen2016}.

\begin{lem}\label{ConvenientRadius:Lemma:ClosestPoint}
If $\dist^{(\orbifold{O},\metric{g})}(x,\orbifold{O}^{\sing})<3\rho$, then there exists a unique $x^{\sing}\in\orbifold{O}^{\sing}$ such that $\dist^{(\orbifold{O},\metric{g})}(x,\orbifold{O}^{\sing})=\dist^{(\orbifold{O},\metric{g})}(x,x^{\sing})$.
\end{lem}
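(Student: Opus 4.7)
The plan is to mimic the classical argument in \cite[Theorem 5.6.21]{Petersen2016}, combining compactness for existence with the injectivity of the normal exponential map from Lemma \ref{ConvenientRadius:Lemma:Homeomorphism} for uniqueness. Existence is straightforward: $\orbifold{O}^{\sing}$ is a finite union of closed submanifolds of the compact $\orbifold{O}$, hence compact, so the continuous function $y\mapsto\dist^{(\orbifold{O},\metric{g})}(x,y)$ attains its infimum $d:=\dist^{(\orbifold{O},\metric{g})}(x,\orbifold{O}^{\sing})<3\rho$ at some $x^{\sing}\in\orbifold{O}^{\sing}$.

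For uniqueness, suppose $x^{\sing}_1,x^{\sing}_2\in\orbifold{O}^{\sing}$ both realize $d$, let $\Sigma_i$ denote the stratum containing $x^{\sing}_i$ and, using that $(\orbifold{O},\dist^{(\orbifold{O},\metric{g})})$ is a complete length space, fix minimizing geodesics $\gamma_i\colon[0,1]\to(\orbifold{O},\metric{g})$ with $\gamma_i(0)=x^{\sing}_i$ and $\gamma_i(1)=x$. The key step is to show that $v_i:=\gamma_i'(0)\in N_{x^{\sing}_i}$. Working in an orbifold chart $(\widetilde{U_i},G_i,\phi_i)$ around $x^{\sing}_i$, the lift of $\Sigma_i$ is a smooth submanifold of $\widetilde{U_i}$ and a suitable lift of $\gamma_i$ is a minimizing geodesic on $(\widetilde{U_i},\widetilde{\metric{g}_i})$. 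Since $x^{\sing}_i$ minimizes the distance to $x$ over $\orbifold{O}^{\sing}$, it also locally minimizes it over $\Sigma_i$, so the standard first variation formula applied to the lifted family yields $\widetilde{\metric{g}_i}(v_i,w)=0$ for every $w\in \T_{x^{\sing}_i}\Sigma_i=\T_{x^{\sing}_i}\orbifold{O}^{\sing}$, i.e., $v_i\in N_{x^{\sing}_i}$.

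To conclude, $|v_i|=\length^{(\orbifold{O},\metric{g})}(\gamma_i)=d<3\rho$, so $v_1$ and $v_2$ lie in the open subset of $N$ on which the exponential map of Lemma \ref{ConvenientRadius:Lemma:Homeomorphism} is a homeomorphism, and $\exp^{(\orbifold{O},\metric{g})}(v_1)=\gamma_1(1)=x=\gamma_2(1)=\exp^{(\orbifold{O},\metric{g})}(v_2)$. Injectivity then forces $v_1=v_2$; in particular, the base points coincide, giving $x^{\sing}_1=x^{\sing}_2$. The main technical obstacle is justifying the first variation step rigorously in the orbifold setting—specifically, producing a smooth one-parameter family of minimizing geodesics from points of $\Sigma_i$ near $x^{\sing}_i$ to $x$ and lifting it to the chart—but this is routine once one works inside the tubular neighborhood of radius $3\rho$ furnished by Lemma \ref{ConvenientRadius:Lemma:Homeomorphism}, where injectivity of the exponential map and continuous dependence on initial conditions are available.
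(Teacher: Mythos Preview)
Your proposal is correct and follows essentially the same route as the paper: existence by compactness, orthogonality of the minimizing geodesic to the stratum via the first variation of energy, and uniqueness from the injectivity of the normal exponential map in Lemma \ref{ConvenientRadius:Lemma:Homeomorphism}. The only noteworthy difference is in how the first variation step is justified: the paper works directly on the orbifold with the variation $H(s,t)=\exp_{\gamma(s)}\bigl[t(\exp_{\gamma(s)})^{-1}(x)\bigr]$ and must invoke that singular strata are totally geodesic (\cite[Proposition 32]{Borzellino1992}) to ensure $H(s,t)\in\orbifold{O}^{\reg}$ for $t\neq 0$ so that the formula applies, whereas your lift to a chart makes the variation computation purely Riemannian and avoids that extra verification.
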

\begin{proof}
The fact that there exists a closest point $x^{\sing}\in\orbifold{O}^{\sing}$ follows from compactness. Let $\Sigma$ be the singular stratum of $\orbifold{O}$ which contains $x^{\sing}$. The lemma is obvious when $x\in\Sigma$, so suppose otherwise.

Let us preliminarly prove that
\begin{equation}\label{ConvenientRadius:Preliminary}
[\exp^{(\orbifold{O},\metric{g})}_{x^{\sing}}]^{-1}(x)\in N_{x^{\sing}}.
\end{equation}
Let $\gamma\colon ]-\delta,\delta[\to\Sigma$ be a smooth curve such that $\gamma(0)=x^{\sing}$ and define
\[
	H(s,t)=\exp^{(\orbifold{O},\metric{g})}_{\gamma(s)}\left[t(\exp^{(\orbifold{O},\metric{g})}_{\gamma(s)})^{-1}(x)\right],
\]
for any $(s,t)\in]-\delta,\delta[\times[0,1]$. Given $s\in ]-\delta,\delta[$, we define the energy of $H(s,\cdot)$ as
\[
	E(s)=\int_0^1 \metric{g}(\partial_2H(s,t),\partial_2H(s,t))\mathrm{d}t.
\]

We claim that $H(s,t)\in\orbifold{O}^{\reg}$ whenever $t\neq 0$. Indeed, suppose by contradiction that there exists $s\in]-\delta,\delta[$ such that $\partial_2H(s,0)\in \T_x\Sigma$. It follows that $H(s,t)\in\Sigma$ for every $t\in [0,1]$ because $\Sigma$ is totally geodesic on $(\orbifold{O},\metric{g})$ (see \cite[Proposition 32]{Borzellino1992}), which contradicts the fact that $H(s,1)=x\in\orbifold{O}^{\reg}$.

It follows from the claim in the previous paragraph that the formula for the first variation of energy holds. As $\dist^{(\orbifold{O},\metric{g})}(x,x^{\sing})=\dist^{(\orbifold{O},\metric{g})}(x,\orbifold{O}^{\sing})$, we obtain
\[
	\metric{g}\left(
		\gamma'(0),
		[\exp^{(\orbifold{O},\metric{g})}_{x^{\sing}}]^{-1}(x)
	\right)=E'(0)=0
\]
and thus \eqref{ConvenientRadius:Preliminary} holds.

Let us prove the uniqueness of the closest point. Suppose that $x_1^{\sing},x_2^{\sing}\in\Sigma$ are such that
\[
	\dist^{(\orbifold{O},\metric{g})}(x,\orbifold{O}^{\sing})=\dist^{(\orbifold{O},\metric{g})}(x,x_1^{\sing})=\dist^{(\orbifold{O},\metric{g})}(x,x_2^{\sing}).
\]
Assertion \eqref{ConvenientRadius:Preliminary} implies $[\exp_{x_1^{\sing}}^{(\orbifold{O},\metric{g})}]^{-1}(x),[\exp_{x_2^{\sing}}^{(\orbifold{O},\metric{g})}]^{-1}(x) \in N$, so Lemma \ref{ConvenientRadius:Lemma:Homeomorphism} shows that $[\exp_{x_1^{\sing}}^{(\orbifold{O},\metric{g})}]^{-1}(x)=[\exp_{x_2^{\sing}}^{(\orbifold{O},\metric{g})}]^{-1}(x)$ and thus $x_1^{\sing}=x_2^{\sing}$.
\end{proof}

In particular, it follows that the projection of points which are $3\rho$-close to $\orbifold{O}^{\sing}$ to the closest singular point,
\[
	\B^{(\orbifold{O},\metric{g})}(\orbifold{O}^{\sing},3\rho)\ni x\mapsto x^{\sing}\in\orbifold{O}^{\sing},
\]
is a well-defined continuous map.

\subsection*{The limiting problem in $\real^n$}

Consider the following nonlinear problem in $\real^n$,
\begin{equation}\label{FunctionalFramework:Eqn:Rn}
	\begin{cases}
		-\Delta u+u=u|u|^{p-2}~\text{and}\\
		u>0.
	\end{cases}
\end{equation}
A weak solution in $H^1(\real^n)$ to \eqref{FunctionalFramework:Eqn:Rn} is precisely a critical point of $E\in C^2(H^1(\real^n))$ given by
\[
	E(u):=\int_{\real^n} \frac{|\grad^{\real^n}(u)(\widetilde{x})|^2}{2}+\frac{u(\widetilde{x})^2}{2}-\frac{u^+(\widetilde{x})^p}{p}\mathrm{d}\widetilde{x}.
\]
We define $m(E)=\inf_{u\in\mathcal{N}(E)} E(u)$, where
\[
	\mathcal{N}(E):=\{u\in H^1(\real^n)\setminus\{0\}:\mathrm{d}_uE(u)=0\}
\]is the Nehari manifold associated to $E$. It is actually well known that there exists a unique element of
\[
	H_{\rad}^1(\real^n):=\{
		v\in H^1(\real^n):~\text{for a.e.}~x,y\in\real^n,~|x|=|y|~\text{implies}~v(x)=v(y)
	\}
\]
which is a weak solution to \eqref{FunctionalFramework:Eqn:Rn} and minimizes $E$.

\begin{defn}
We will denote by $\widetilde{V}$ the unique function in $H_{\rad}^1(\real^n)\cap\mathcal{N}(E)$ such that $E(\widetilde{V})=m(E)$.
\end{defn}

If $0<\epsilon<1$, then we define $\widetilde{V}_{\epsilon}(\widetilde{x})=\widetilde{V}(\epsilon^{-1}\widetilde{x})$ for a.e. $\widetilde{x}\in\real^n$, so that $\widetilde{V}_\epsilon\in H^1_{\rad}(\real^n)$ is a weak solution to
\[
	\begin{cases}
		-\epsilon^2\Delta u+u=u|u|^{p-2}~\text{and}\\
		u>0.
	\end{cases}
\]

\subsection*{The injection $i_\epsilon\colon \Z^\orbifold{O}\to\mathcal{N}_\epsilon$}

Let us begin by fixing the notation for cut-off functions.
\begin{defn}\label{TheInjection:Defn:CutOff}
Given $r>0$, $\chi_r\colon [0,\infty[ \to [0,1]$ denotes a nonincreasing smooth function such that $\chi_r(t)=1$ if $t\leq r/2$; $\chi_r(t)=0$ if $t\geq r$ and $\sup |\chi_r'|\leq 3/r$.
\end{defn}

We can exploit the radial symmetry of $\widetilde{V}$ to obtain the following definition.

\begin{defn}\label{TheInjection:Definition:Vepsilonx}
Suppose that $0<\epsilon<1$, $x\in \Z^{\orbifold{O}}$ and
$(\widetilde{U}:=\B^{\real^n}(0,\rho),G,\phi)$ is a normal chart centered at $x$ on $(\orbifold{O},\metric{g})$. As $\widetilde{V}_\epsilon\in H^1_{\rad}(\real^n)$ and $G\subset\mathrm{O}(\real^n)$, we can define
\[
	V_{\epsilon,x}(y)
	=
	\begin{cases}
		\widetilde{V}_{\epsilon}(\phi^{-1}(y))\chi_\rho(\dist^{(\orbifold{O},\metric{g})}(x,y))~&\text{if}~y\in U~\text{and}\\
		0~&\text{otherwise}
	\end{cases}
\]
for $\mu^{(\orbifold{O},\metric{g})}$-a.e. $y\in\orbifold{O}$.
\end{defn}

Let us prove that $V_{\epsilon,x}\in H^1(\orbifold{O},\metric{g})$.

\begin{lem}
Given $(\epsilon,x)\in ]0,1[\times\Z^{\orbifold{O}}$, it holds that $V_{\epsilon,x}\in H^1(\orbifold{O},\metric{g})$.
\end{lem}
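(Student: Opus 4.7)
The plan is to prove $V_{\epsilon,x}\in H^1(\orbifold{O},\metric{g})$ by exhibiting an explicit Cauchy sequence in $C^\infty(\orbifold{O})$ whose limit is $V_{\epsilon,x}$. The starting observation is that $V_{\epsilon,x}$ is nothing but the push-forward via $\phi$ of the lift
\[
    \widetilde{V}_{\epsilon,x}(\widetilde{y}) := \widetilde{V}_\epsilon(\widetilde{y})\,\chi_\rho(|\widetilde{y}|),\qquad \widetilde{y}\in\widetilde{U}=\B^{\real^n}(0,\rho),
\]
extended by zero outside $U$. Since $\widetilde{V}\in H^1_{\rad}(\real^n)$ and $G\subset\mathrm{O}(\real^n)$, both factors depend only on $|\widetilde{y}|$, so $\widetilde{V}_{\epsilon,x}$ is $G$-invariant. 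The cut-off factor forces $\supp\widetilde{V}_{\epsilon,x}$ to be compactly contained in $\widetilde{U}$, and the product rule (using $\widetilde{V}_\epsilon\in H^1(\real^n)$ and $\sup|\chi_\rho'|\leq 3/\rho$) yields $\widetilde{V}_{\epsilon,x}\in H^1(\widetilde{U},\widetilde{\metric{g}})$.

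Next, I would approximate the radial function $\widetilde{V}_\epsilon$ in $H^1(\real^n)$ by smooth radial functions. Convolving with a non-negative radial mollifier $\eta_k$ with $\supp\eta_k\subset\B^{\real^n}(0,1/k)$ produces $\widetilde{f}_k:=\eta_k*\widetilde{V}_\epsilon$ which is smooth, radial (radiality is preserved because convolution of two radial distributions is radial), and converges to $\widetilde{V}_\epsilon$ in $H^1(\real^n)$. The products $\widetilde{g}_k := \widetilde{f}_k\,\chi_\rho(|\cdot|)$ are then smooth, radial, $G$-invariant, and compactly supported in $\widetilde{U}$, so they descend through $\phi$ and extend by zero outside $U$ to genuine smooth orbifold maps $g_k\in C^\infty(\orbifold{O})$.

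To finish, I would apply the local integration formula of Definition \ref{RiemannianStructures:Defn:Radon} on the single chart $(\widetilde{U},G,\phi)$: because $V_{\epsilon,x}-g_k$ is supported in $U$ and its lift is $\widetilde{V}_{\epsilon,x}-\widetilde{g}_k$, one has
\[
    \|V_{\epsilon,x} - g_k\|_{H^1(\orbifold{O},\metric{g})}^2 = \frac{1}{|G|}\,\|\widetilde{V}_{\epsilon,x} - \widetilde{g}_k\|_{H^1(\widetilde{U},\widetilde{\metric{g}})}^2,
\]
and the right-hand side tends to $0$ by the product rule and the $H^1(\real^n)$-convergence of $\widetilde{f}_k$. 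Thus $g_k\to V_{\epsilon,x}$ in the norm on $H^1(\orbifold{O},\metric{g})$, and the claim follows by definition of that space as a completion. The only delicate point is ensuring the smooth approximations really do descend to orbifold maps, i.e.\ are $G$-invariant; this is automatic here because $\widetilde{V}_\epsilon$ is radial and convolution against a radial mollifier preserves radiality.
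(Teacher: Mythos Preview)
Your argument is correct and follows essentially the same route as the paper: approximate $\widetilde{V}_\epsilon$ by smooth radial functions in the chart, multiply by the cut-off, push down to $C^\infty(\orbifold{O})$ via $G$-invariance, and use the local integration formula to transfer $H^1$-convergence. The only cosmetic difference is that the paper phrases the last step as showing the pushed-down sequence is Cauchy in $H^1(\orbifold{O},\metric{g})$ and then identifies the limit $\mu^{(\orbifold{O},\metric{g})}$-a.e., rather than writing $\|V_{\epsilon,x}-g_k\|_{H^1(\orbifold{O},\metric{g})}$ before $V_{\epsilon,x}$ is known to lie in that completion; it also makes explicit the constant comparing $\widetilde{\metric{g}}$ with the Euclidean metric on $\overline{\B^{\real^n}(0,\rho)}$, which you use implicitly when passing from $H^1(\real^n)$-convergence to $H^1(\widetilde{U},\widetilde{\metric{g}})$-convergence.
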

\begin{proof}
It suffices to construct $\{V_k\}_{k\in\nat}\subset H^1(\orbifold{O},\metric{g})$ such that $(V_k)_{k\in\nat}$ is a Cauchy sequence in $H^1(\orbifold{O},\metric{g})$ and $V_k\to V_{\epsilon,x}$ $\mu^{(\orbifold{O},\metric{g})}$-a.e. as $k\to\infty$.

Let $(\widetilde{U}:=\B^{\real^n}(0,\rho),G,\phi)$ be a normal chart centered at $x$ on $(\orbifold{O},\metric{g})$. Fix $\{\widetilde{V_k}\}_{k\in\nat}\subset C_{\rad}^\infty(\widetilde{U})$ such that $\supp\widetilde{V_k}\subset\widetilde{U}$ for any $k\in\nat$ and $\widetilde{V_k}\to \widetilde{V}_\epsilon$ in $H^1(\widetilde{U})$ as $k\to\infty$. Given $k\in\nat$, let $V_k\in C^\infty(\orbifold{O})$ be obtained from $\widetilde{V_k}\colon \widetilde{U}\to\real$ analogously as in Definition \ref{TheInjection:Definition:Vepsilonx}.

We claim that $(V_k)_{k\in\nat}$ is a Cauchy sequence in $H^1(\orbifold{O},\metric{g})$. Inded, the inclusion $\overline{\B^{(\T_x\orbifold{O},\metric{g}_x)}(0,\rho)}\subset\Omega^{(\orbifold{O},\metric{g})}$ shows that there exists $K>0$ such that
\begin{multline*}
	\|V_{k_1}-V_{k_2}\|_{H^1(\orbifold{O},\metric{g})}^2
	=
	\\
	=
	\frac{1}{|G|}\int_{\widetilde{U}}
		\left[
			|\grad^{\real^n}(\widetilde{V_{k_1}}-\widetilde{V_{k_2}})|^{(\widetilde{U},\widetilde{\metric{g}})}
		\right]^2
		+
		(\widetilde{V_{k_1}}-\widetilde{V_{k_2}})^2
	\mathrm{d}\mu^{(\widetilde{U},\widetilde{\metric{g}})}
	\leq
	\\
	\leq
	\frac{K}{|G|}\|\widetilde{V_{k_1}}-\widetilde{V_{k_2}}\|_{H^1(\widetilde{U})}^2
\end{multline*}
for every $k_1,k_2\in\nat$.

Let us prove that, up to subsequence, $V_k\to V_{\epsilon,x}$ $\mu^{(\orbifold{O},\metric{g})}$-a.e. as $k\to\infty$. Since $\widetilde{V_k}\to \widetilde{V}_\epsilon$ in $H^1(\real^n)$ as $k\to\infty$, we conclude that, up to subsequence, $\widetilde{V_k}\to \widetilde{V}_\epsilon$ a.e. as $k\to\infty$. The result is therefore a corollary of the fact that  given $k\in\nat$, $\widetilde{V_k}$ is a local lift of $V_k$ and analogously for the pair $\widetilde{V}_{\epsilon}$, $V_{\epsilon,x}$.
\end{proof}

The following remark is usual whenever the Nehari manifold is being considered.

\begin{rmk}\label{TheInjection:Rmk:n_epsilon}
Suppose that $0<\epsilon<1$ and $u\in H^1(\orbifold{O},\metric{g})$ is such that $u^+$ is not zero $\mu^{(\orbifold{O},\metric{g})}$-a.e. In this situation, $su\in\mathcal{N}_\epsilon$ if, and only if, $s=n_\epsilon(u)$, where
\begin{multline*}
	n_\epsilon(u)^{p-2}:=
	\\
	:=
	\left[
		\int_{\orbifold{O}}(u^+)^{p}\mathrm{d}\mu^{(\orbifold{O},\metric{g})}
	\right]^{-1}
	\int_{\orbifold{O}}
		\epsilon^2 \metric{g}(\grad^{(\orbifold{O},\metric{g})}(u),\grad^{(\orbifold{O},\metric{g})}(u))+u^2
	\mathrm{d}\mu^{(\orbifold{O},\metric{g})}>0.
\end{multline*}
\end{rmk}

We can finally define the injection $i_\epsilon\colon\Z^{\orbifold{O}}\to\mathcal{N}_\epsilon$.

\begin{defn}
If $0<\epsilon<1$, then we use the previous remark to define
\[
	\Z^{\orbifold{O}}\ni
	x\mapsto i_\epsilon(x):=n_\epsilon(V_{\epsilon,x})V_{\epsilon,x}
	\in\mathcal{N}_\epsilon.
\]
\end{defn}

It suffices to argue as in the proof of the first conclusion of \cite[Proposition 4.2]{Benci2007} to obtain the following result.

\begin{lem}
Given $\epsilon\in]0,1[$, $i_\epsilon\colon \Z^{\orbifold{O}}\to\mathcal{N}_\epsilon$ is continuous.
\end{lem}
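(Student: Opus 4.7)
The strategy is to write $i_\epsilon(x) = n_\epsilon(V_{\epsilon,x}) V_{\epsilon,x}$ and reduce the problem to continuity of $x \mapsto V_{\epsilon,x}$ from $\Z^\orbifold{O}$ to $H^1(\orbifold{O},\metric{g})$. Indeed, the explicit formula in Remark~\ref{TheInjection:Rmk:n_epsilon} shows that $u \mapsto n_\epsilon(u)$ is continuous wherever $\int_{\orbifold{O}}(u^+)^p\,\mathrm{d}\mu^{(\orbifold{O},\metric{g})} > 0$, a condition that holds throughout the image of $x\mapsto V_{\epsilon,x}$, so the continuity of $i_\epsilon$ reduces to that of $V_{\epsilon,\cdot}$.

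The key observation for handling $V_{\epsilon,\cdot}$ is that, because $\widetilde{V}_\epsilon$ is radial and normal charts realize the exponential map in an orthonormal frame, $V_{\epsilon,x}$ admits the intrinsic, chart-free expression
\[
V_{\epsilon,x}(y) = W_\epsilon\bigl(\dist^{(\orbifold{O},\metric{g})}(x,y)\bigr)\,\chi_\rho\bigl(\dist^{(\orbifold{O},\metric{g})}(x,y)\bigr),
\]
where $W_\epsilon$ is the radial profile of $\widetilde{V}_\epsilon$. Hence $V_{\epsilon,x}$ is determined by $\dist^{(\orbifold{O},\metric{g})}(x,\cdot)$, and compactness of $\orbifold{O}$ immediately yields $V_{\epsilon,x_n}\to V_{\epsilon,x_0}$ uniformly (hence in $L^2(\orbifold{O},\metric{g})$) whenever $x_n \to x_0$ in $\Z^\orbifold{O}$.

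For the $H^1$ convergence, I fix $x_0\in\Z^\orbifold{O}$ and choose a normal chart $(\widetilde{U_0},G_0,\phi_0)$ centered at $x_0$ of radius strictly larger than $\rho$, so that $\B^{(\orbifold{O},\metric{g})}(x_n,\rho) \subset U_0$ for every $x_n$ in a sufficiently small neighborhood of $x_0$. Since $x_n\in\Z^\orbifold{O}$ has local group of maximum order $|G_0|$, Lemma~\ref{Orbifolds:Lem:Monomorphism} and Lemma~\ref{Orbifolds:Lem:LocalGroup} force the stabilizer of $\phi_0^{-1}(x_n)$ in $G_0$ to be the whole of $G_0$, so $\phi_0^{-1}(x_n) = \{\widetilde{x_n}\}$ is a single $G_0$-fixed point with $\widetilde{x_n}\to 0$. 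The lift of $V_{\epsilon,x_n}$ to $\widetilde{U_0}$ then takes the form $W_\epsilon\bigl(\dist^{(\widetilde{U_0},\widetilde{\metric{g}})}(\widetilde{x_n},\cdot)\bigr)\chi_\rho\bigl(\dist^{(\widetilde{U_0},\widetilde{\metric{g}})}(\widetilde{x_n},\cdot)\bigr)$, a family that depends continuously on $\widetilde{x_n}$ in $H^1(\widetilde{U_0},\widetilde{\metric{g}})$ by the classical continuity-under-translations argument in Sobolev spaces, combined with the smoothness of the Riemannian distance in both arguments within the injectivity radius. The formula $\|u\|_{H^1(\orbifold{O},\metric{g})}^2 = |G_0|^{-1}\,\|\widetilde{u}\|_{H^1(\widetilde{U_0},\widetilde{\metric{g}})}^2$ for $u$ supported in $U_0$ then transfers this convergence back to $H^1(\orbifold{O},\metric{g})$.

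The main technical obstacle is the chart bookkeeping in the last paragraph: I must ensure that a single enlarged normal chart around $x_0$ accommodates the supports of the entire tail of $\{V_{\epsilon,x_n}\}$, and that the lifts $\widetilde{x_n}$ are unambiguously $G_0$-fixed points. Both points follow from the identification of $\Z^\orbifold{O}\cap U_0$ with the fixed-point set of $G_0$ in $\widetilde{U_0}$, together with the structure of normal charts; once they are established, the remainder of the argument is a standard Sobolev continuity estimate.
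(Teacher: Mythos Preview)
Your proposal is correct and follows essentially the same line as the paper, which simply defers to the proof of the first conclusion of \cite[Proposition~4.2]{Benci2007}; you have additionally spelled out the orbifold-specific step (that points of $\Z^{\orbifold{O}}$ near $x_0$ lift to $G_0$-fixed points in a single enlarged normal chart), which is exactly the extra care needed over the manifold case.
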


The next result follows from a little adjustment to the proof of the second conclusion of \cite[Proposition 4.2]{Benci2007}.

\begin{lem}\label{TheInjection:Lem:Energy}
Given $\delta>0$, there exists $\epsilon_\delta\in]0,1]$ such that if $0<\epsilon<\epsilon_\delta$, then $i_\epsilon(x)\in \Lambda_{\epsilon,(\zeta^{\orbifold{O}})^{-1}m(E)+\delta}$ for any $x\in\Z^{\orbifold{O}}$.
\end{lem}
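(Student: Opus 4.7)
The plan is to compute $J_\epsilon(i_\epsilon(x))$ in a normal chart centered at $x$, perform the rescaling $\tilde{y}=\epsilon\tilde{z}$, and show that the resulting expression converges to $m(E)/\zeta^{\orbifold{O}}$ uniformly in $x\in\Z^{\orbifold{O}}$ as $\epsilon\to 0^+$; since $\Z^{\orbifold{O}}$ is compact, the desired $\epsilon_\delta$ then follows automatically.

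Fix $x\in\Z^{\orbifold{O}}$ and let $(\widetilde{U}=\B^{\real^n}(0,\rho),G_x,\phi_x)$ be a normal chart centered at $x$; by the very definition of $\Z^{\orbifold{O}}$, we have $|G_x|=\zeta^{\orbifold{O}}$. Because $\rho<\inj^{(\orbifold{O},\metric{g})}(x)$, the radial distance in $\widetilde{U}$ coincides with $\dist^{(\orbifold{O},\metric{g})}$, so the local lift of $V_{\epsilon,x}$ on $\widetilde{U}$ is simply $\widetilde{V}_\epsilon(\tilde{y})\chi_\rho(|\tilde{y}|)$. Combining Definition \ref{RiemannianStructures:Defn:Radon} with the change of variables $\tilde{y}=\epsilon\tilde{z}$, I rewrite
\[
	A_\epsilon(x):=\epsilon^{-n}\int_{\orbifold{O}}\epsilon^2\metric{g}\bigl(\grad^{(\orbifold{O},\metric{g})}(V_{\epsilon,x}),\grad^{(\orbifold{O},\metric{g})}(V_{\epsilon,x})\bigr)+V_{\epsilon,x}^2\,\mathrm{d}\mu^{(\orbifold{O},\metric{g})}
\]
as $(\zeta^{\orbifold{O}})^{-1}$ times an integral over $\B^{\real^n}(0,\rho/\epsilon)$ of an expression involving $\widetilde{V}$, $\chi_\rho(\epsilon|\cdot|)$, and the rescaled metric coefficients $\widetilde{\metric{g}}(\epsilon\tilde{z})$; the analogous manipulation applies to $B_\epsilon(x):=\epsilon^{-n}\int_\orbifold{O}(V_{\epsilon,x}^+)^p\,\mathrm{d}\mu^{(\orbifold{O},\metric{g})}$.

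Since the chart is normal, $\widetilde{\metric{g}}_{ij}(\epsilon\tilde{z})\to\delta_{ij}$ as $\epsilon\to 0^+$ uniformly on compact subsets of $\real^n$, and the exponential decay of $\widetilde{V}$ controls the contribution of the tails. The dominated convergence theorem thus yields
\[
	A_\epsilon(x)\to\frac{\|\widetilde{V}\|^2_{H^1(\real^n)}}{\zeta^{\orbifold{O}}},\qquad B_\epsilon(x)\to\frac{\|\widetilde{V}\|^p_{L^p(\real^n)}}{\zeta^{\orbifold{O}}}.
\]
Because $\widetilde{V}\in\mathcal{N}(E)$, these two limits are equal, so $n_\epsilon(V_{\epsilon,x})=(A_\epsilon(x)/B_\epsilon(x))^{1/(p-2)}\to 1$. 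Using Remark \ref{TheInjection:Rmk:n_epsilon} to rewrite $J_\epsilon(i_\epsilon(x))=(1/2-1/p)\,n_\epsilon(V_{\epsilon,x})^2 A_\epsilon(x)$, I conclude
\[
	J_\epsilon(i_\epsilon(x))\to \Bigl(\tfrac{1}{2}-\tfrac{1}{p}\Bigr)\frac{\|\widetilde{V}\|^2_{H^1(\real^n)}}{\zeta^{\orbifold{O}}}=\frac{m(E)}{\zeta^{\orbifold{O}}}.
\]

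The main technical obstacle is the uniformity in $x\in\Z^{\orbifold{O}}$: a single $\epsilon_\delta$ must work for every base point. This rests on the choice $\rho<\inf_{\Z^{\orbifold{O}}}\inj^{(\orbifold{O},\metric{g})}$ from Definition \ref{ConvenientRadius:Definition:Second}, which provides normal charts of a common radius around every point of $\Z^{\orbifold{O}}$, combined with the compactness of $\Z^{\orbifold{O}}$ (it is a closed subset of $\orbifold{O}$, being the stratum of points of maximal isotropy). One can therefore extract a finite subcover of $\Z^{\orbifold{O}}$ by images of such charts, on each of which the rescaled metric coefficients depend continuously on $(x,\tilde{z})$ and converge to the Euclidean ones uniformly in the base point, so the convergences of $A_\epsilon$, $B_\epsilon$ and hence of $J_\epsilon\circ i_\epsilon$ are uniform in $x$.
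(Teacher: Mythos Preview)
Your argument is correct and follows essentially the same route as the paper's proof: both compute the $L^2$, gradient, and $L^p$ norms of $V_{\epsilon,x}$ in a normal chart via the rescaling $\tilde{y}=\epsilon\tilde{z}$, show they converge to $(\zeta^{\orbifold{O}})^{-1}$ times the corresponding norms of $\widetilde{V}$, deduce $n_\epsilon(V_{\epsilon,x})\to 1$, and conclude by the Nehari identity. The paper delegates the rescaling details to \cite[Proposition~4.2]{Benci2007} and writes the final energy as $\frac{p-2}{2p}n_\epsilon^{p}B_\epsilon$ rather than your equivalent $(1/2-1/p)n_\epsilon^{2}A_\epsilon$, but the substance is the same; your explicit treatment of uniformity in $x$ via compactness of $\Z^{\orbifold{O}}$ is in fact more detailed than the paper's. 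One minor imprecision: $\Z^{\orbifold{O}}$ need not be a single stratum, but this does not affect your compactness argument.
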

\begin{proof}
It suffices to argue as in the proof of \cite[Proposition 4.2]{Benci2007} to prove that given $\delta>0$, there exists $\epsilon_\delta\in]0,1]$ such that if $0<\epsilon<\epsilon_\delta$, then
\[
	\left|
		\epsilon^{-n}\|V_{\epsilon,x}\|_{L^2(\orbifold{O},\metric{g})}^2
		-
		(\zeta^\orbifold{O})^{-1}\|\widetilde{V}\|^2_{L^2(\real^n)}
	\right|<\delta;
\]
\[
	\left|
		\epsilon^{2-n}\|\grad^{(\orbifold{O},\metric{g})}(V_{\epsilon,x})\|_{L^2(\orbifold{O},\metric{g})}^2
		-
		(\zeta^\orbifold{O})^{-1}\|\grad^{\real^n}(\widetilde{V})\|^2_{L^2(\real^n)}
	\right|<\delta
\]
and
\[
	\left|
		\epsilon^{-n}\|V_{\epsilon,x}\|_{L^p(\orbifold{O},\metric{g})}^p
		-
		(\zeta^\orbifold{O})^{-1}\|\widetilde{V}\|^p_{L^p(\real^n)}
	\right|<\delta
\]
(see \cite[(4.5)--(4.7)]{Benci2007}).

In view of Remark \ref{TheInjection:Rmk:n_epsilon}, it is a corollary of the previous paragraph that given $\delta>0$, there exists $\epsilon_\delta\in]0,1]$ such that given $(\epsilon,x)\in]0,\epsilon_\delta[\times \Z^{\orbifold{O}}$, we have
\[
	|n_{\epsilon}(V_{\epsilon,x})-1|<\delta.
\]
As
\[
	J_\epsilon(n_\epsilon(V_{\epsilon,x})V_{\epsilon,x})
	=
	\frac{1}{\epsilon^n}\frac{p-2}{2p}n_\epsilon(V_{\epsilon,x})^p\|V_{\epsilon,x}^+\|_{L^p(\orbifold{O},\metric{g})}^p
\]
for any $(\epsilon,x)\in ]0,1[\times \Z^{\orbifold{O}}$, we conclude that given $\delta>0$, there exists $\epsilon_\delta\in]0,1]$ such that
\[
	\left|
		J_\epsilon(n_\epsilon(V_{\epsilon,x})V_{\epsilon,x})-(\zeta^{\orbifold{O}})^{-1}m(E)
	\right|<\delta.
\]
for any $(\epsilon,x)\in]0,\epsilon_\delta[\times \Z^{\orbifold{O}}$.
\end{proof}

\subsection*{Concentration of functions in $\Lambda_{\epsilon,(\zeta^{\orbifold{O}})^{-1}m(E)+\delta}$}
We want to prove the following result.

\begin{thm}\label{Concentration:Thm}
Given $\eta\in]0,1[$, there exist $\delta_\eta\in]0,\infty[$ and $\epsilon_\eta\in]0,1]$ such that if $0<\delta<\delta_\eta$, $0<\epsilon<\epsilon_\eta$ and $u\in \Lambda_{\epsilon,(\zeta^{\orbifold{O}})^{-1}m(E)+\delta}$, then
\[
	\int_{B^{(\orbifold{O},\metric{g})}(x,\rho)} (u^+)^p\mathrm{d}\mu^{(\orbifold{O},\metric{g})}>\eta\|u^+\|_{L^p(\orbifold{O},\metric{g})}^p
\]
for a certain $x\in\B^{(\orbifold{O},\metric{g})}(\Z^{\orbifold{O}},\rho)$.
\end{thm}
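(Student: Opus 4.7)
The plan is to argue by contradiction via a Benci--Cerami-type profile decomposition adapted to orbifolds, leveraging the factor $1/|G|$ attached to every chart of structural group $G$ by Definition \ref{RiemannianStructures:Defn:Radon}. Fix $\eta\in]0,1[$. If the conclusion fails, there exist sequences $\epsilon_k\downarrow 0$, $\delta_k\downarrow 0$ and $u_k\in\Lambda_{\epsilon_k,(\zeta^{\orbifold{O}})^{-1}m(E)+\delta_k}$ violating the displayed inequality for every $x\in\B^{(\orbifold{O},\metric{g})}(\Z^{\orbifold{O}},\rho)$. Since $u_k\in\mathcal{N}_{\epsilon_k}$, the Nehari identity yields $J_{\epsilon_k}(u_k)=\tfrac{p-2}{2p}\epsilon_k^{-n}\|u_k^+\|_{L^p(\orbifold{O},\metric{g})}^p$, so $\epsilon_k^{-n}\|u_k^+\|_{L^p(\orbifold{O},\metric{g})}^p\to\tfrac{2p}{p-2}(\zeta^{\orbifold{O}})^{-1}m(E)$.

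Following the Lions-type concentration-compactness scheme of \cite[Section 4]{Benci2007}, I would iteratively pick points $x_k^{(1)},x_k^{(2)},\ldots\in\orbifold{O}$ and orbifold charts $(\widetilde{U_{k,i}},G_{k,i},\phi_{k,i})$ whose domains contain $\B^{(\orbifold{O},\metric{g})}(x_k^{(i)},\rho)$ (such charts exist by Definition \ref{ConvenientRadius:Definition:Second}). Letting $\widetilde{u}_k$ be a local lift of $u_k$ through $\phi_{k,i}$ and rescaling $\widetilde{w}_{k,i}(y):=\widetilde{u}_k(\widetilde{x}_k^{(i)}+\epsilon_k y)$ on $\B^{\real^n}(0,\rho/\epsilon_k)$, one obtains (up to further subsequences) $x_k^{(i)}\to x_\ast^{(i)}\in\orbifold{O}$; $|G_{k,i}|\to|\Gamma^{\orbifold{O}}(x_\ast^{(i)})|$; the pulled-back metric $\widetilde{\metric{g}}_{k,i}(\widetilde{x}_k^{(i)}+\epsilon_k\cdot)$ converges to the Euclidean inner product (up to a linear isometry) uniformly on compact subsets of $\real^n$; and $\widetilde{w}_{k,i}\rightharpoonup\widetilde{w}_i^\ast$ in $H^1(\real^n)$ with each nontrivial $\widetilde{w}_i^\ast$ a weak solution of \eqref{FunctionalFramework:Eqn:Rn}, hence $E(\widetilde{w}_i^\ast)\geq m(E)$. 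The $H^1$-orthogonality of profiles yields the asymptotic energy decomposition
\[
(\zeta^{\orbifold{O}})^{-1}m(E)=\lim_{k\to\infty}J_{\epsilon_k}(u_k)\geq\sum_{i\,:\,\widetilde{w}_i^\ast\neq 0}\frac{E(\widetilde{w}_i^\ast)}{|\Gamma^{\orbifold{O}}(x_\ast^{(i)})|}\geq\sum_{i\,:\,\widetilde{w}_i^\ast\neq 0}\frac{m(E)}{|\Gamma^{\orbifold{O}}(x_\ast^{(i)})|}.
\]

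Cancelling $m(E)>0$ forces $\sum_{i\,:\,\widetilde{w}_i^\ast\neq 0}|\Gamma^{\orbifold{O}}(x_\ast^{(i)})|^{-1}\leq(\zeta^{\orbifold{O}})^{-1}$, which combined with $|\Gamma^{\orbifold{O}}(x_\ast^{(i)})|\leq\zeta^{\orbifold{O}}$ compels exactly one profile to be nontrivial and $|\Gamma^{\orbifold{O}}(x_\ast^{(1)})|=\zeta^{\orbifold{O}}$, i.e.\ $x_\ast^{(1)}\in\Z^{\orbifold{O}}$. In particular, $x_k^{(1)}\in\B^{(\orbifold{O},\metric{g})}(\Z^{\orbifold{O}},\rho)$ for $k$ large, and the vanishing of the residual (which is part of the profile-decomposition statement) yields
\[
\frac{1}{\|u_k^+\|_{L^p(\orbifold{O},\metric{g})}^p}\int_{\B^{(\orbifold{O},\metric{g})}(x_k^{(1)},\rho)}(u_k^+)^p\,\mathrm{d}\mu^{(\orbifold{O},\metric{g})}\to 1,
\]
which exceeds $\eta$ for $k$ large, directly contradicting the hypothesized violation of the conclusion at $x=x_k^{(1)}$.

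The main obstacle is the iterated extraction of profiles with mutually orthogonal rescaled energy contributions on the exhausting Euclidean domains $\B^{\real^n}(0,\rho/\epsilon_k)$. This is standard on $\real^n$ and on Riemannian manifolds, but in our setting one must additionally choose the lifts $\widetilde{x}_k^{(i)}\in\phi_{k,i}^{-1}(x_k^{(i)})$ so that the concentration profile on $\orbifold{O}$ corresponds to a single profile on $\real^n$, rather than being spread over the $|G_{k,i}|$ pre-images under $\phi_{k,i}$. The remaining orbifold-specific ingredients (charts of radius $\rho$ at every point, upper semicontinuity of $|\Gamma^{\orbifold{O}}|$ at limit points, and the $|G_{k,i}|^{-1}$ factor in Definition \ref{RiemannianStructures:Defn:Radon}) combine cleanly to yield the single maximum-symmetry profile via the elementary inequality above.
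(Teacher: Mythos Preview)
Your overall strategy shares the paper's key insight: the $1/|G|$ weight in Definition~\ref{RiemannianStructures:Defn:Radon}, combined with the energy ceiling $(\zeta^{\orbifold{O}})^{-1}m(E)$, forces the concentration point to have local group of maximal order. But two steps are not justified as written.

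The main gap is your claim that each nontrivial profile $\widetilde{w}_i^\ast$ is a weak solution to \eqref{FunctionalFramework:Eqn:Rn}, hence $E(\widetilde{w}_i^\ast)\geq m(E)$. Membership $u_k\in\Lambda_{\epsilon_k,(\zeta^{\orbifold{O}})^{-1}m(E)+\delta_k}$ gives no control on $\mathrm{d}J_{\epsilon_k}(u_k)$, and weak limits of rescalings of a Nehari-constrained sequence need not lie on $\mathcal{N}(E)$ nor solve the limiting equation. The paper inserts an Ekeland step (Lemma~\ref{Concentration:Lem:Ekeland}) that replaces each $u_k$ by a nearby almost-critical function before rescaling; this almost-criticality is precisely what makes \eqref{Concentration:Benci5.7} and \eqref{Concentration:Benci5.8} go through. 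Without it your energy inequality has no lower bound for the individual summands and the argument collapses.

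Second, the chart bookkeeping you flag as ``the main obstacle'' is indeed one, and your statement $|G_{k,i}|\to|\Gamma^{\orbifold{O}}(x_\ast^{(i)})|$ is not correct as stated: the structural group of a chart is the local group of its \emph{center}, and by Proposition~\ref{ExponentialMap:Proposition:DiscontinuousInj} you generally cannot take a normal chart of uniform radius \emph{around} a regular $x_k^{(i)}$ accumulating on $\orbifold{O}^{\sing}$. The paper's fix is to center the normal chart at $y_k:=x_k^{\sing}$ (via Lemma~\ref{ConvenientRadius:Lemma:ClosestPoint}) when $x_\infty\in\orbifold{O}^{\sing}$, so that $|G_k|=|\Gamma^{\orbifold{O}}(y_k)|$ and Remark~\ref{ExponentialMap:Remark} keeps the chart radius bounded below. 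With Ekeland and this recentering in place, a single-profile extraction (the concentration point supplied by Lemma~\ref{Concentration:Lem:FixedPart}) already suffices, and your iterated decomposition is unnecessary; this is exactly the route taken in Lemma~\ref{Concentration:Lemma:2}, from which the theorem follows in two lines using $m(J_\epsilon)\to(\zeta^{\orbifold{O}})^{-1}m(E)$ and the Nehari identity.
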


Note that unlike the analogous results \cite[Proposition 5.5]{Benci2007} and \cite[Theorem 3.7]{Petean2019}, our theorem shows that any $u\in\Lambda_{\epsilon,(\zeta^{\orbifold{O}})^{-1}m(E)+\delta}$ is concentrated around a point situated at a specific subset of $\orbifold{O}$, namely, $\B^{(\orbifold{O},\metric{g})}(\Z^{\orbifold{O}},\rho)$. For its proof, we will follow \cite[Section 5]{Benci2007} with minor modifications. The following preliminary lemmas are similar to \cite[Lemmas 5.3,5.4]{Benci2007} and may be proved accordingly.

\begin{lem}\label{Concentration:Lem:FixedPart}
There exists $\gamma>0$ for which given $\delta\in]0,\infty[$, there exists $\epsilon_\delta\in]0,1]$ such that if $0<\epsilon<\epsilon_\delta$ and $u\in\Lambda_{\epsilon,(\zeta^{\orbifold{O}})^{-1}m(E)+\delta}$, then
\[
	\frac{1}{\epsilon^n}\int_{\B^{(\orbifold{O},\metric{g})}(x,	\epsilon)} (u^+)^p\mathrm{d}\mu^{(\orbifold{O},\metric{g})}\geq\gamma
\]
for a certain $x\in\orbifold{O}$.
\end{lem}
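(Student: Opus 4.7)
My plan is to argue by contradiction via a Lions-type concentration-compactness lemma applied locally in orbifold charts. Suppose the conclusion fails and fix $\delta>0$; then there exist $\gamma_k\searrow 0$, $\epsilon_k\searrow 0$, and $u_k\in\Lambda_{\epsilon_k,(\zeta^{\orbifold{O}})^{-1}m(E)+\delta}$ with
\[
    \sup_{x\in\orbifold{O}}\frac{1}{\epsilon_k^n}\int_{\B^{(\orbifold{O},\metric{g})}(x,\epsilon_k)}(u_k^+)^p\,\mathrm{d}\mu^{(\orbifold{O},\metric{g})}\leq\gamma_k.
\]
By Remark \ref{TheInjection:Rmk:n_epsilon} the Nehari identity gives $\epsilon_k^{-n}\|u_k^+\|_{L^p(\orbifold{O},\metric{g})}^p=\tfrac{2p}{p-2}J_{\epsilon_k}(u_k)$. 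Combining this with the upper bound $J_{\epsilon_k}(u_k)\leq(\zeta^{\orbifold{O}})^{-1}m(E)+\delta$ supplies uniform control on rescaled Sobolev norms, and with the uniform lower bound $\liminf_{\epsilon\to 0}m(J_\epsilon)>0$ (deducible from the Nehari minimax characterization together with a limit-profile comparison linking $m(J_\epsilon)$ to the positive ground state level $(\zeta^{\orbifold{O}})^{-1}m(E)$) yields $\epsilon_k^{-n}\|u_k^+\|_{L^p(\orbifold{O},\metric{g})}^p\geq C_0>0$.

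Next, I cover $\orbifold{O}$ by a finite collection of fundamental open subsets $\{U_j\}_{j=1}^{N}$ corresponding to orbifold charts $(\widetilde{U_j},G_j,\phi_j)$, each $\widetilde{U_j}$ a bounded open set in $\real^n$ on which $\widetilde{\metric{g}_j}$ has bounded Jacobian. Since $\|u_k^+\|_{L^p(\orbifold{O},\metric{g})}^p\leq\sum_{j=1}^N\int_{U_j}(u_k^+)^p\,\mathrm{d}\mu^{(\orbifold{O},\metric{g})}$, it suffices to show $\epsilon_k^{-n}\int_{U_j}(u_k^+)^p\,\mathrm{d}\mu^{(\orbifold{O},\metric{g})}\to 0$ for each $j$, which on summation contradicts the uniform bound $C_0$. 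Fix $j$; set $\widetilde{v_k}:=(u_k^+)\circ\phi_j$ on $\widetilde{U_j}$, rescale $\widetilde{w_k}(\widetilde{y}):=\widetilde{v_k}(\epsilon_k\widetilde{y})$, and extend by zero outside $\widetilde{U_j}/\epsilon_k$. A change of variables using Definition \ref{RiemannianStructures:Defn:Radon} transforms the above Nehari bounds into a uniform $H^1(\real^n)$-bound on $\widetilde{w_k}$, and translates the concentration-failure hypothesis into $\sup_{\widetilde{y}\in\real^n}\int_{\B^{\real^n}(\widetilde{y},1)}|\widetilde{w_k}|^p\,\mathrm{d}\widetilde{x}\to 0$. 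Lions' concentration-compactness lemma on $\real^n$ (applied with the Hölder interpolation that converts a vanishing $L^p$-mass-on-unit-balls condition into the standard $L^2$ vanishing condition) then yields $\widetilde{w_k}\to 0$ in $L^p(\real^n)$, and reversing the rescaling produces the desired contradiction.

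The main obstacle is executing the Lions argument faithfully through the orbifold chart structure — in particular, verifying that the finite groups $G_j$ and the local Jacobian distortions of $\widetilde{\metric{g}_j}$ contribute only universal multiplicative constants (absorbed into $C_0$ and the $H^1$-bound), so that Lions' lemma applies to the $G_j$-invariant pullbacks $\widetilde{w_k}$ as ordinary functions on $\real^n$. A more delicate issue is the universality of $\gamma$ across $\delta\in\,]0,\infty[$: because the extracted constant $C_0$ depends only on $\liminf_{\epsilon\to 0}m(J_\epsilon)$ and on the structural Sobolev/distortion constants of $(\orbifold{O},\metric{g})$, none of which depend on $\delta$, a single universal $\gamma$ suffices for all $\delta$, with only $\epsilon_\delta$ adapting as $\delta$ grows.
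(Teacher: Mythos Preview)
Your overall strategy---argue by contradiction, pull back to charts, rescale by $\epsilon_k$, and invoke a Lions-type vanishing lemma on $\real^n$---is exactly the approach the paper intends (it defers the proof to \cite[Lemma~5.3]{Benci2007}, whose argument is precisely this). The handling of the orbifold structure via a finite cover by fundamental open sets, with the $|G_j|$ and Jacobian factors absorbed as harmless constants, is also correct.

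There is one genuine weak spot: your justification of the lower bound $\epsilon_k^{-n}\|u_k^+\|_{L^p}^p\geq C_0>0$ via ``$\liminf_{\epsilon\to 0}m(J_\epsilon)>0$, deducible from a limit-profile comparison with $(\zeta^{\orbifold{O}})^{-1}m(E)$'' risks circularity. In this paper the inequality $\liminf_{\epsilon\to 0}m(J_\epsilon)\geq(\zeta^{\orbifold{O}})^{-1}m(E)$ is extracted from Lemma~\ref{Concentration:Lemma:2}, which in turn \emph{uses} the present lemma. The clean, non-circular route (and the one implicit in \cite{Benci2007}) is to obtain $C_0$ directly from the Nehari constraint together with a rescaled Sobolev inequality: covering $\orbifold{O}$ by balls of radius $\epsilon$, rescaling, and summing the Euclidean Sobolev estimate yields a constant $c>0$ independent of $\epsilon$ with
\[
\epsilon^{-n}\|v\|_{L^p(\orbifold{O},\metric{g})}^p \leq c\left(\epsilon^{-n}\int_{\orbifold{O}}\epsilon^2\metric{g}(\grad v,\grad v)+v^2\,\mathrm{d}\mu^{(\orbifold{O},\metric{g})}\right)^{p/2},
\]
whence for $u\in\mathcal{N}_\epsilon$ the Nehari identity gives $t\leq c\,t^{p/2}$ with $t:=\epsilon^{-n}\|u^+\|_{L^p}^p>0$, so $t\geq c^{-2/(p-2)}$. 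With this replacement, your argument goes through and coincides with the referenced proof.
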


\begin{lem}\label{Concentration:Lem:Ekeland}
Fix $\epsilon\in]0,1[$. Given $\delta>0$ and $u\in\mathcal{N}_\epsilon$ such that
\[
	J_\epsilon(u)<\min\left(
		m(J_\epsilon)+2\delta,\frac{m(E)}{\zeta^{\orbifold{O}}}+\delta
	\right),
\]
there exists $u_\delta\in\mathcal{N}_\epsilon$ such that $J_\epsilon(u_\delta)<J_\epsilon(u)$;
\[
	\frac{1}{\epsilon^n}
	\int_{\orbifold{O}}
		\epsilon^2[|\grad^{(\orbifold{O},\metric{g})}(u_\delta-u)|^{(\orbifold{O},\metric{g})}]^2
		+
		(u_\delta-u)^2
	\mathrm{d}\mu^{(\orbifold{O},\metric{g})}
	<
	16\delta
\]
and
\[
	\mathrm{d}_{u_\delta}J_\epsilon(w)^2
	<
	\frac{\delta}{\epsilon^n}
	\int_{\orbifold{O}}
		\epsilon^2\metric{g}(\grad^{(\orbifold{O},\metric{g})}(w),\grad^{(\orbifold{O},\metric{g})}(w))
		+
		w^2
	\mathrm{d}\mu^{(\orbifold{O},\metric{g})}.
\]
for any $w\in\T_{u}\mathcal{N}_\epsilon\subset H^1(\orbifold{O},\metric{g})$.
\end{lem}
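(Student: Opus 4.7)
The plan is to apply Ekeland's variational principle to the restriction $J_\epsilon|_{\mathcal{N}_\epsilon}$, equipping $H^1(\orbifold{O},\metric{g})$ with the $\epsilon$-rescaled inner product
\[
\langle u,v\rangle_\epsilon := \frac{1}{\epsilon^n}\int_\orbifold{O} \epsilon^2\metric{g}(\grad^{(\orbifold{O},\metric{g})}(u),\grad^{(\orbifold{O},\metric{g})}(v))+uv\,\mathrm{d}\mu^{(\orbifold{O},\metric{g})},
\]
whose induced norm $\|\cdot\|_\epsilon$ is equivalent (with $\epsilon$-dependent constants) to the standard Sobolev norm. With this choice, the two quantitative conclusions of the lemma take the transparent form $\|u_\delta-u\|_\epsilon^2<16\delta$ and $(\mathrm{d}_{u_\delta}J_\epsilon(w))^2<\delta\|w\|_\epsilon^2$.

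First, I observe that $\mathcal{N}_\epsilon$ is a complete $C^1$-submanifold of $(H^1(\orbifold{O},\metric{g}),\|\cdot\|_\epsilon)$: it is the zero set of the $C^1$-submersion $G_\epsilon(u):=\langle u,u\rangle_\epsilon-\epsilon^{-n}\|u^+\|_{L^p(\orbifold{O},\metric{g})}^p$ away from the origin, and it is closed (and bounded away from $0$) because $J_\epsilon|_{\mathcal{N}_\epsilon}\geq m(J_\epsilon)>0$. Hence $(\mathcal{N}_\epsilon,\|\cdot\|_\epsilon)$ is a complete metric space on which Ekeland's principle applies. Since $\inf_{\mathcal{N}_\epsilon}J_\epsilon=m(J_\epsilon)$ and the hypothesis gives $J_\epsilon(u)<\inf_{\mathcal{N}_\epsilon}J_\epsilon+2\delta$, I apply Ekeland with tolerance $2\delta$ and scale $\lambda=4\sqrt\delta$, producing $u_\delta\in\mathcal{N}_\epsilon$ with $J_\epsilon(u_\delta)\leq J_\epsilon(u)$, $\|u_\delta-u\|_\epsilon\leq 4\sqrt\delta$, and
\[
J_\epsilon(u_\delta)\leq J_\epsilon(v)+\tfrac{\sqrt\delta}{2}\|v-u_\delta\|_\epsilon\quad\text{for every }v\in\mathcal{N}_\epsilon.
\]
Squaring the distance estimate gives the second conclusion directly.

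Second, I convert the approximate-infimum property into a derivative bound. Given $w\in \T_{u_\delta}\mathcal{N}_\epsilon$, select a $C^1$-curve $\gamma\colon\,]-\tau,\tau[\,\to\mathcal{N}_\epsilon$ with $\gamma(0)=u_\delta$ and $\gamma'(0)=w$; plugging $v=\gamma(t)$ into the Ekeland inequality, dividing by $|t|$, and letting $t\to 0$ yields $\pm\mathrm{d}_{u_\delta}J_\epsilon(w)\leq\tfrac{\sqrt\delta}{2}\|w\|_\epsilon$, whence $(\mathrm{d}_{u_\delta}J_\epsilon(w))^2\leq\tfrac{\delta}{4}\|w\|_\epsilon^2<\delta\|w\|_\epsilon^2$, which is the third conclusion. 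The strict inequality $J_\epsilon(u_\delta)<J_\epsilon(u)$ is arranged by first perturbing $u$ slightly along a direction of descent of $J_\epsilon|_{\mathcal{N}_\epsilon}$ (if $u$ is not already critical, such a direction exists and preserves the Ekeland hypothesis), then applying Ekeland at this perturbation.

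The main obstacle is bookkeeping rather than conceptual: one must verify that the $\epsilon$-factors line up so that Ekeland's length scale $\lambda=4\sqrt\delta$ and the dual scale $2\delta/\lambda=\sqrt\delta/2$ give exactly the bounds in the statement, and one must check that Ekeland's inequality on the submanifold passes faithfully to the intrinsic differential along any tangent vector. Both points go through exactly as in \cite[Lemma 5.4]{Benci2007}, since gradient, integration, and tangent spaces on $(\orbifold{O},\metric{g})$ behave in charts identically to the manifold case. Note that the second hypothesis $J_\epsilon(u)<m(E)/\zeta^{\orbifold{O}}+\delta$ does not enter the Ekeland step; it is inherited by $u_\delta$ via $J_\epsilon(u_\delta)\leq J_\epsilon(u)$ and will be needed downstream in the proof of Theorem \ref{Concentration:Thm}.
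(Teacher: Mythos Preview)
Your proposal is correct and matches the paper's intended approach: the paper gives no proof of its own but simply refers to \cite[Lemma 5.4]{Benci2007}, whose argument is precisely the application of Ekeland's variational principle to $J_\epsilon|_{\mathcal{N}_\epsilon}$ in the rescaled norm $\|\cdot\|_\epsilon$, with the tolerance/scale bookkeeping you describe. Your observation that the bound $J_\epsilon(u)<m(E)/\zeta^{\orbifold{O}}+\delta$ plays no role in the Ekeland step (it is only carried over to $u_\delta$ for later use) is also correct, and your silent correction of $\T_u\mathcal{N}_\epsilon$ to $\T_{u_\delta}\mathcal{N}_\epsilon$ is the intended reading.
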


Theorem \ref{Concentration:Thm} will be obtained as a corollary of the following result.

\begin{lem}\label{Concentration:Lemma:2}
Given $\eta\in]0,1[$, there exist $\delta_\eta>0$ and $\epsilon_\eta\in]0,1]$ such that if $0<\delta<\delta_\eta$, $0<\epsilon<\epsilon_\eta$ and $u\in \Lambda_{\epsilon,(\zeta^{\orbifold{O}})^{-1}m(E)+\delta}$, then
\[
	\frac{1}{\epsilon^n}\int_{B^{(\orbifold{O},\metric{g})}(x,\rho)} (u^+)^p\mathrm{d}\mu^{(\orbifold{O},\metric{g})}
	>
	\eta\frac{2p}{p-2}\frac{m(E)}{\zeta^{\orbifold{O}}}
\]
for a certain $x\in\B^{(\orbifold{O},\metric{g})}(\Z^{\orbifold{O}},\rho)$.
\end{lem}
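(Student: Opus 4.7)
I would argue by contradiction, paralleling \cite[Lemma 5.6]{Benci2007}, keeping careful track of how the orbifold volume factor $1/|\Gamma^{\orbifold{O}}|$ from Definition \ref{RiemannianStructures:Defn:Radon} enters the energy at a concentration point. Negating the conclusion for some $\eta\in\,]0,1[$ produces sequences $\epsilon_k\downarrow 0$, $\delta_k\downarrow 0$ and $u_k\in\Lambda_{\epsilon_k,(\zeta^{\orbifold{O}})^{-1}m(E)+\delta_k}$ satisfying
\[
	\frac{1}{\epsilon_k^n}\int_{\B^{(\orbifold{O},\metric{g})}(x,\rho)}(u_k^+)^p\,\mathrm{d}\mu^{(\orbifold{O},\metric{g})}\leq\eta\frac{2p}{p-2}\frac{m(E)}{\zeta^{\orbifold{O}}}
\]
for every $x\in\B^{(\orbifold{O},\metric{g})}(\Z^{\orbifold{O}},\rho)$. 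Applying Lemma \ref{Concentration:Lem:FixedPart} would yield a concentration point $x_k\in\orbifold{O}$ where at least $\gamma\epsilon_k^n$ units of the $L^p$-mass of $u_k^+$ accumulate inside $\B^{(\orbifold{O},\metric{g})}(x_k,\epsilon_k)$; Lemma \ref{Concentration:Lem:Ekeland} then lets me replace $u_k$ by an $H^1$-close, asymptotically critical sequence of the same energy type. Passing to a subsequence I would write $x_k\to x_\infty\in\orbifold{O}$ and set $|G_\infty|:=|\Gamma^{\orbifold{O}}(x_\infty)|$.

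Next I would fix a linear chart $(\real^n,G_\infty,\phi)$ centered at $x_\infty$, lift $u_k$ to a $G_\infty$-invariant $\widetilde{u_k}$, choose lifts $\widetilde{x_k}\to 0$ of $x_k$ and define the rescaled sequence $w_k(y):=\widetilde{u_k}(\widetilde{x_k}+\epsilon_k y)$. A direct change of variables combined with Definition \ref{RiemannianStructures:Defn:Radon} should yield, for every continuous $F$ and every $R>0$ with $R\epsilon_k<\rho$ for all large $k$,
\[
	\frac{1}{\epsilon_k^n}\int_{\B^{(\orbifold{O},\metric{g})}(x_k,R\epsilon_k)}F(u_k)\,\mathrm{d}\mu^{(\orbifold{O},\metric{g})}=\frac{1+o(1)}{|\Gamma^{\orbifold{O}}(x_k)|}\int_{\B^{\real^n}(0,R)}F(w_k)\,\mathrm{d}y.
\]
Since the rescaled pullback metrics on $\real^n$ converge locally to the Euclidean metric, $\{w_k\}$ is bounded in $H^1_{\loc}(\real^n)$ and the almost-criticality supplied by Lemma \ref{Concentration:Lem:Ekeland} passes to the limit; up to subsequence $w_k\rightharpoonup w$ in $H^1(\real^n)$ with $w$ a weak solution of \eqref{FunctionalFramework:Eqn:Rn}. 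The mass lower bound from Lemma \ref{Concentration:Lem:FixedPart} forces $w\not\equiv 0$, so $w\in\mathcal{N}(E)$ and $E(w)\geq m(E)$.

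For $k$ large, $|\Gamma^{\orbifold{O}}(x_k)|\leq|G_\infty|$, since the local group can only shrink off the stratum of $x_\infty$. Combining the identity $\|u^+\|_{L^p(\orbifold{O},\metric{g})}^p/\epsilon^n=(2p/(p-2))J_\epsilon(u)$ on $\mathcal{N}_\epsilon$ with Fatou's lemma applied to the previous display yields
\[
	\frac{2p}{p-2}\frac{m(E)}{\zeta^{\orbifold{O}}}\geq\limsup_{k\to\infty}\frac{\|u_k^+\|_{L^p(\orbifold{O},\metric{g})}^p}{\epsilon_k^n}\geq\frac{1}{|G_\infty|}\int_{\real^n}(w^+)^p\,\mathrm{d}y=\frac{2p}{p-2}\frac{E(w)}{|G_\infty|},
\]
so $E(w)\leq(|G_\infty|/\zeta^{\orbifold{O}})m(E)$; combined with $E(w)\geq m(E)$ and the maximality of $\zeta^{\orbifold{O}}$ this forces $|G_\infty|=\zeta^{\orbifold{O}}$, $x_\infty\in\Z^{\orbifold{O}}$ and $E(w)=m(E)$, so $w$ is a translate of the ground state $\widetilde{V}$. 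Fixing $\eta'\in\,]\eta,1[$ and $R_{\eta'}>0$ with $\int_{\B^{\real^n}(0,R_{\eta'})}(w^+)^p\,\mathrm{d}y\geq\eta'(2p/(p-2))m(E)$, for all large $k$ one has $R_{\eta'}\epsilon_k<\rho$, $x_k\in\B^{(\orbifold{O},\metric{g})}(\Z^{\orbifold{O}},\rho)$, and the displayed change of variables gives
\[
	\frac{1}{\epsilon_k^n}\int_{\B^{(\orbifold{O},\metric{g})}(x_k,\rho)}(u_k^+)^p\,\mathrm{d}\mu^{(\orbifold{O},\metric{g})}\geq\eta'\frac{2p}{p-2}\frac{m(E)}{\zeta^{\orbifold{O}}}+o(1),
\]
contradicting the negation. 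The main obstacle I anticipate lies in the rescaling step: ensuring that the structural group action at $x_k$ is faithfully absorbed into the volume factor $|\Gamma^{\orbifold{O}}(x_k)|^{-1}$ in the limit identity, and that the asymptotic criticality survives the passage from $\mathcal{N}_{\epsilon_k}$ to $\mathcal{N}(E)$ in spite of the changing ambient geometry and the possibly non-trivial local groups along $\{x_k\}$.
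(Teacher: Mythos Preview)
Your overall strategy matches the paper's: argue by contradiction, locate concentration points $x_k$ via Lemma \ref{Concentration:Lem:FixedPart}, upgrade to an almost-critical sequence via Lemma \ref{Concentration:Lem:Ekeland}, rescale to obtain a weak limit $w$ solving the limiting problem with $E(w)=m(E)$, and compare energies through the orbifold volume factor to force $|\Gamma^{\orbifold{O}}(x_\infty)|=\zeta^{\orbifold{O}}$, hence $x_\infty\in\Z^{\orbifold{O}}$, yielding the contradiction.

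The one genuine technical difference is where the rescaling is centered. You work in a \emph{single} linear chart at $x_\infty$ and translate to a chosen lift $\widetilde{x_k}$. The paper instead centers a \emph{normal} chart at a moving point $y_k$, where $y_k=x_k$ if $x_\infty$ is regular and $y_k=x_k^{\sing}$ (the projection of $x_k$ to $\orbifold{O}^{\sing}$ from Lemma \ref{ConvenientRadius:Lemma:ClosestPoint}) if $x_\infty$ is singular. This detour through $x_k^{\sing}$ is made precisely to secure a uniform lower bound on $\inj^{(\orbifold{O},\metric{g})}(y_k)$ via Remark \ref{ExponentialMap:Remark}, so that normal charts of a fixed radius $\rho'$ exist at every $y_k$; the rescaling then happens at the origin of each chart and the volume factor is exactly $1/|G_k|$ with $|G_k|=|\Gamma^{\orbifold{O}}(y_k)|$.

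Your displayed change-of-variables identity with factor $1/|\Gamma^{\orbifold{O}}(x_k)|$ is the spot where your approach is shakier than you perhaps realize: in the chart at $x_\infty$ the preimage $\phi^{-1}(\B(x_k,R\epsilon_k))$ is the $G_\infty$-orbit of a small ball around $\widetilde{x_k}$, and when $x_\infty$ is singular these orbit-translates can overlap (nothing controls the ratio $\epsilon_k/\dist(\widetilde{x_k},0)$), so the identity with $|\Gamma^{\orbifold{O}}(x_k)|$ need not hold even asymptotically. Fortunately your argument only uses the weaker lower bound with factor $1/|G_\infty|$, and that one is robust: it follows by keeping just the single ball $\B(\widetilde{x_k},R\epsilon_k)$ inside the preimage and dividing by $|G_\infty|$. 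So your proof goes through, but the intermediate formula should be stated as an inequality with $|G_\infty|$ rather than an equality with $|\Gamma^{\orbifold{O}}(x_k)|$. The paper's projection-to-the-singular-locus trick is exactly what makes that identity clean.
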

\begin{proof}
The result is the same as \cite[Proposition 5.5]{Benci2007} if $\orbifold{O}$ is a manifold, so let us suppose that $\orbifold{O}$ is not a manifold, i.e., $\zeta^\orbifold{O}>1$.

Let us argue by contradiction. Suppose that there exist $\eta\in]0,1[$ and sequences $\{\epsilon_k\}_{k\in\nat},\{\delta_k\}_{k\in\nat}\subset]0,1[,\{u_k\in\mathcal{N}_{\epsilon_k}\}_{k\in\nat}$ such that $\epsilon_k,\delta_k\to 0$ as $k\to\infty$ and given $k\in\nat$,
\[
	J_{\epsilon_k}(u_k)<\min\left(
		m(J_{\epsilon_k})+2\delta_k,\frac{m(E)}{\zeta^{\orbifold{O}}}+\delta_k
	\right)
\]
and
\[
	\frac{1}{\epsilon_k^n}\int_{B^{(\orbifold{O},\metric{g})}(y,\rho)} (u_k^+)^p\mathrm{d}\mu^{(\orbifold{O},\metric{g})}
	\leq
	\eta\frac{2p}{p-2}\frac{m(E)}{\zeta^{\orbifold{O}}}+\delta_k
\]
for any $y\in\B^{(\orbifold{O},\metric{g})}(Z^{\orbifold{O}},\rho)$. Given $k\in\nat$, we can use Lemma \ref{Concentration:Lem:FixedPart} to fix $x_k\in\orbifold{O}$ such that
\[
	\frac{1}{\epsilon_k^n}\int_{\B^{(\orbifold{O},\metric{g})}(x_k,\epsilon_k)} (u^+)^p\mathrm{d}\mu^{\metric{g}}>\gamma.
\]

As $\orbifold{O}$ is compact, there exists $x_\infty\in\orbifold{O}$ such that, up to subsequence, $x_k\to x_\infty$ as $k\to\infty$. Up to discarding a finite number of indices, we can suppose that $\{x_k\}_{k\in\nat}\subset\B^{(\orbifold{O},\metric{g})}(x_\infty,3\rho)$. Given $k\in\nat$, let
\[
	y_k
	=
	\begin{cases}
		x_k,~&\text{if}~x_\infty\in\orbifold{O}^{\reg}~\text{and}\\
		x_k^{\sing},~&\text{if}~x_\infty\in\orbifold{O}^{\sing}.
	\end{cases}
\]
Up to discarding a finite number of indices again, we can use Remark \ref{ExponentialMap:Remark} to guarantee that we can fix $\rho'>0$ such that $\B^{(T_{y_k}\orbifold{O},\metric{g}_{y_k})}(0,\rho')\subset\Omega^{(\orbifold{O},\metric{g})}$ for any $k\in\nat$. Given $k\in\nat$,
\begin{itemize}
\item
Lemma \ref{Concentration:Lem:Ekeland} allows us to suppose that
\[
	\mathrm{d}_{u_k}J_{\epsilon_k}(v)^2
	<
	\frac{\delta_k}{\epsilon_k^n}
	\int_{\orbifold{O}}
		\epsilon^2\metric{g}(\grad^{(\orbifold{O},\metric{g})}(v),\grad^{(\orbifold{O},\metric{g})}(v))
		+
		v^2
	\mathrm{d}\mu^{(\orbifold{O},\metric{g})}.
\]
for any $v\in\T_{u_k}\mathcal{N}_{\epsilon_k}\subset H^1(\orbifold{O},\metric{g})$;
\item
we let $(\widetilde{U_k}:=B^{\real^n}(0,\rho'),G_k,\phi_k)$ be a normal chart around $y_k$ on $(\orbifold{O},\metric{g})$ and
\item
we define $w_k\in H_0^1(B^{\real^n}(0,\rho'/\epsilon_k))$ as $w_k(\widetilde{x})=\chi_{\rho'}(\epsilon_k|\widetilde{x}|)u_k(\phi_k(\epsilon_k\widetilde{x}))$ for a.e. $\widetilde{x}\in B^{\real^n}(0,\rho'/\epsilon_k)$.
\end{itemize}

In fact,
\begin{multline}\label{Concentration:Benci5.6}
	\text{there exists}~w\in H^1(\real^n)~\text{such that}~w_k\rightharpoonup w~\text{in}~H^1(\real^n)~\text{and}
	\\
	w_k\to w~\text{in}~L^p_{\loc}(\real^n)~\text{as}~k\to\infty;
\end{multline}
\begin{equation}\label{Concentration:Benci5.7}
	\text{the function}~w\in H^1(\real^n)~\text{is a weak solution to}~
	\begin{cases}
		-\Delta w+ w=w|w|^{p-2}~\text{and}\\
		w\geq 0;
	\end{cases}
\end{equation}
and
\begin{equation}\label{Concentration:Benci5.8}
	E(w)=m(E)
\end{equation}
The results \eqref{Concentration:Benci5.6}--\eqref{Concentration:Benci5.8} are the same as \cite[Lemmas 5.6--5.8]{Benci2007}, so we refer the reader to \cite[Section 7]{Benci2007} for their proofs. Furthermore, the set $\{\phi_k\}_{k\in\nat}$ is composed of chart maps of normal charts on $(\orbifold{O},\metric{g})$ and $\epsilon_k\to 0$, $x_k\to x_\infty$ as $k\to\infty$, so
\begin{multline}\label{Concentration:Eqn:Aux}
	\text{given}~\sigma\in]0,1[,~\text{there exists}~k_\sigma\in\nat~\text{such that if}~k\geq k_\sigma,~\text{then}
	\\
	1-\sigma\leq|\det\widetilde{\metric{g}_k}(\epsilon_kz)|^{1/2}\leq 1+\sigma~\text{for every}~z\in\B^{\real^n}(0,\rho').
\end{multline}

Let us prove that $x_\infty\in\B^{(\orbifold{O},\metric{g})}(\Z^{\orbifold{O}},\rho)$. Suppose otherwise. It follows from \eqref{Concentration:Eqn:Aux} that given $\sigma\in]0,1[$, we have
\begin{align*}
	\|u_k^+\|^p_{L^p(\orbifold{O},\metric{g})}
	&\geq
	\int_{\B^{(\orbifold{O},\metric{g})}(x_k,\rho')}
		\chi_{\rho'}(\dist^{(\orbifold{O},\metric{g})}(y,x_k))^pu_k^+(y)^p
	\mathrm{d}\mu^{(\orbifold{O},\metric{g})}(y);\\
	&\geq
	\frac{1}{|G_k|}
	\int_{\B^{\real^n}(0,\rho')}
		|\det\widetilde{\metric{g}_k}(\widetilde{x})|^{1/2}
		\chi_{\rho'}(|\widetilde{x}|)^pu_k^+(\phi_k(\widetilde{x}))^p
	\mathrm{d}\widetilde{x};\\
	&\geq
	\frac{1-\sigma}{|G_k|}\epsilon_k^n
	\int_{\B^{\real^n}(0,\rho'/\epsilon_k)}
		\chi_{\rho'}(\epsilon_k|\widetilde{x}|)^pu_k^+(\phi_k(\epsilon_k\widetilde{x}))^p
	\mathrm{d}\widetilde{x};\\
	&\geq
	\frac{1-\sigma}{|G_k|}\epsilon_k^n\|w_k^+\|_{L^p(\real^n)}^p
	\geq
	\frac{1-\sigma}{\zeta^{\orbifold{O}}-1}\epsilon_k^n\|w_k^+\|_{L^p(\real^n)}^p
\end{align*}
whenever $k\in\nat$ is sufficiently large. Due to \eqref{Concentration:Benci5.6} and \eqref{Concentration:Benci5.8}, we obtain
\[
	\liminf_{k\to\infty}\|u_k^+\|^p_{L^p(\orbifold{O},\metric{g})}
	\geq
	\frac{1}{\zeta^{\orbifold{O}}-1}\frac{2p}{p-2}m(E),
\]
which contradicts the fact that $J_{\epsilon_k}(u_k)<(\zeta^\orbifold{O})^{-1}m(E)+\delta_k$ for every $k\in\nat$ because $\delta_k\to 0$ as $k\to\infty$.

As $\B^{(\orbifold{O},\metric{g})}(\Z^{\orbifold{O}},\rho)$ is an open subset of $\orbifold{O}$, it follows from the claim in the previous paragraph that, up to discarding a finite number of indices, $\{x_k\}_{k\in\nat}\subset\B^{(\orbifold{O},\metric{g})}(\Z^{\orbifold{O}},\rho)$. At this point, we should argue as in the last part of the proof of \cite[Proposition 5.5]{Benci2007} in order to finish (more precisely, see \cite[p. 480]{Benci2007}).
\end{proof}

Let us finally prove Theorem \ref{Concentration:Thm}.
\begin{proof}[Proof of Theorem \ref{Concentration:Thm}]
Due to Lemma \ref{TheInjection:Lem:Energy}, we know that $\limsup_{\epsilon\to 0^+} m(J_\epsilon)\leq m(E)/\zeta^\orbifold{O}$, so the limit $m(J_\epsilon)\to m(E)/\zeta^\orbifold{O}$ as $\epsilon\to 0^+$ is a corollary of Lemma \ref{Concentration:Lemma:2}. The result then follows from this limit and Remark \ref{TheInjection:Rmk:n_epsilon}, which implies $J_\epsilon(u)=(p-2)\|u^+\|^p_{L^p(\orbifold{O},\metric{g})}/(2p)$ for every $\epsilon\in]0,1[$ and $u\in\mathcal{N}_\epsilon$.
\end{proof}

\subsection*{Extending the Riemannian center of mass}

The notion of the Riemannian center of mass on Riemannian manifolds was extended by Petean in \cite[Section 5]{Petean2019} to also encompass functions which are sufficiently concentrated, but not necessarily supported, on geodesic balls with a small radius.

Let us to show that Petean's constructions also hold in the context of Riemannian orbifolds. First of all, we set
\[
	L^{1,r}(\orbifold{O},\metric{g})=\{
		u\in L^1(\orbifold{O},\metric{g}):\supp u\subset\B^{(\orbifold{O},\metric{g})}(x,r)~\text{for a certain}~x\in\orbifold{O}
	\}
\]
for any $r>0$. Given $u\in L^{1,\rho}(\orbifold{O},\metric{g})\setminus\{0\}$, consider the continuous function given by
\[
	P_u(x):=\frac{1}{\|u\|_{L^1(\orbifold{O},\metric{g})}}\int_{\orbifold{O}}\dist^{(\orbifold{O},\metric{g})}(x,y)^2|u(y)|\mathrm{d}\mu^{(\orbifold{O},\metric{g})}(y)
\]
for every $x\in\orbifold{O}$. It is easy to check that
\begin{equation}\label{c_epsilon:Equation:Mapping_P}
	L^{1,\rho}(\orbifold{O},\metric{g})\setminus\{0\}\ni u\mapsto P_u \in C(\orbifold{O},\real)
\end{equation}
is continuous and a simple argument by contradiction proves the following lemma.
\begin{lem}
If $K$ is a compact metric space and
\[
	\mathcal{A}:=\{f\in C(K,\real):f~\text{has a unique minimum point}\},
\]
then the mapping that takes a function in $\mathcal{A}$ to its unique minimum point in $K$ is continuous.
\end{lem}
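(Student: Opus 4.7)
The plan is to argue by contradiction; let $\Phi\colon\mathcal{A}\to K$ denote the map that assigns to each $f\in\mathcal{A}$ its unique minimum point. Suppose that $\Phi$ fails to be continuous at some $f\in\mathcal{A}$. Then there exists a sequence $\{f_n\}_{n\in\nat}\subset\mathcal{A}$ such that $f_n\to f$ uniformly on $K$ as $n\to\infty$, yet $\Phi(f_n)\not\to\Phi(f)$ in $K$. By the compactness of $K$, we may pass to a subsequence (not relabelled) along which $\Phi(f_n)\to x^*$, where necessarily $x^*\neq\Phi(f)$.

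The next step is to show that $x^*$ is itself a minimum point of $f$. Fix an arbitrary $y\in K$. Since $\Phi(f_n)$ minimizes $f_n$ on $K$, we have the inequality $f_n(\Phi(f_n))\leq f_n(y)$ for every $n\in\nat$. The estimate $|f_n(\Phi(f_n))-f(x^*)|\leq\|f_n-f\|_\infty+|f(\Phi(f_n))-f(x^*)|$ combined with the uniform convergence $\|f_n-f\|_\infty\to 0$ and the continuity of $f$ at $x^*$ yields $f_n(\Phi(f_n))\to f(x^*)$, while uniform convergence also gives $f_n(y)\to f(y)$. Passing to the limit, $f(x^*)\leq f(y)$. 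Since $y\in K$ was arbitrary, $x^*$ is a minimum point of $f$.

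Since $f\in\mathcal{A}$, its minimum point is unique, so $x^*=\Phi(f)$, which contradicts the choice of the subsequence and completes the proof. The only mildly delicate point is the passage to the limit in $f_n(\Phi(f_n))\leq f_n(y)$; this is handled routinely by combining the uniform convergence $f_n\to f$ with the continuity of the limit $f$ at $x^*$, and is the essential reason why compactness of $K$ is needed (to extract the accumulation point $x^*$ in the first place).
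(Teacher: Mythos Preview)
Your argument is correct and is precisely the ``simple argument by contradiction'' that the paper alludes to without spelling out. The only cosmetic point is that from $\Phi(f_n)\not\to\Phi(f)$ one should first extract a subsequence staying a fixed positive distance from $\Phi(f)$ before invoking compactness, so that the limit $x^*$ is genuinely distinct from $\Phi(f)$; this is routine and implicit in your phrasing.
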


Due to Proposition \ref{CenterOfMass:Proposition} and the hypotheses on $\rho$ (see Definition \ref{ConvenientRadius:Definition:Second}), we can define $\cm\colon L^{1,\rho}(\orbifold{O},\metric{g})\setminus\{0\}\to\orbifold{O}$ as the mapping that associates $u\in L^{1,\rho}(\orbifold{O},\metric{g})\setminus\{0\}$ to the unique minimum point of $P_u$. In fact, $\cm\colon L^{1,\rho}(\orbifold{O},\metric{g})\setminus\{0\}\to\orbifold{O}$ is continuous as a composition of the continuous mapping \eqref{c_epsilon:Equation:Mapping_P} with the mapping in the previous lemma.

Given $u\in L^1(\orbifold{O},\metric{g})\setminus\{0\}$, we define its \emph{concentration function} as the continuous function given by
\[
	\CF_u(x):=\frac{1}{\|u\|_{L^1(\orbifold{O},\metric{g})}}\int_{\B^{(\orbifold{O},\metric{g})}(x,\rho)} |u|\mathrm{d}\mu^{(\orbifold{O},\metric{g})}
\]
for any $x\in\orbifold{O}$ and we define its \emph{concentration coefficient} as $\CC(u)=\max\CF_u$. If $1/2<\eta<1$ and $\CC(u)>\eta$, then we define $\Psi_{\eta,u}\in L^1(\orbifold{O},\metric{g})$ as
\[
	\Psi_{\eta,u}(x)=\left[
		1-\chi_\eta(\CF_u(x))
	\right]u(x)
\]
for $\mu^{(\orbifold{O},\metric{g})}$-a.e. $x\in\orbifold{O}$. The following result shows that $\Psi_{\eta,u}$ is supported in a small metric ball whenever $u$ is sufficiently concentrated around a point.

\begin{lem}[{\cite[Lemma 5.1]{Petean2019}}]
If $1/2<\eta<1$ and $\CF_u(x)=\CC(u)>\eta$, then $\supp \Psi_{\eta,u}\subset\B^{(\orbifold{O},\metric{g})}(x,2\rho)$.
\end{lem}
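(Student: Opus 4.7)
The plan is a short disjoint-ball argument: any point of $\supp\Psi_{\eta,u}$ must carry nonnegligible normalized $|u|$-concentration (through $\CF_u$), while two metric balls of radius $\rho$ whose centers lie at distance at least $2\rho$ are necessarily disjoint, so the total normalized $|u|$-mass of such a pair cannot exceed $1$.

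First, I would characterize $\supp\Psi_{\eta,u}$ in terms of $\CF_u$. By Definition \ref{TheInjection:Defn:CutOff}, $\chi_\eta\equiv 1$ on $[0,\eta/2]$, so the factor $1-\chi_\eta(\CF_u(y))$ vanishes at every $y\in\orbifold{O}$ with $\CF_u(y)\leq\eta/2$. Since $\CF_u$ is continuous (its integrand depends continuously on its first variable, while $|u|$ provides an integrable weight), this yields
\[
	\supp\Psi_{\eta,u}\subset\overline{\{y\in\orbifold{O}:\CF_u(y)>\eta/2\}}\subset\{y\in\orbifold{O}:\CF_u(y)\geq\eta/2\}.
\]

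Next, I would argue by contradiction: suppose some $y\in\supp\Psi_{\eta,u}$ satisfies $\dist^{(\orbifold{O},\metric{g})}(x,y)\geq 2\rho$. The triangle inequality forces the disjointness $\B^{(\orbifold{O},\metric{g})}(x,\rho)\cap\B^{(\orbifold{O},\metric{g})}(y,\rho)=\emptyset$, so integrating $|u|/\|u\|_{L^1(\orbifold{O},\metric{g})}$ over the union yields
\[
	\CF_u(x)+\CF_u(y)=\frac{1}{\|u\|_{L^1(\orbifold{O},\metric{g})}}\int_{\B^{(\orbifold{O},\metric{g})}(x,\rho)\cup\B^{(\orbifold{O},\metric{g})}(y,\rho)}|u|\mathrm{d}\mu^{(\orbifold{O},\metric{g})}\leq 1.
\]
On the other hand, the hypothesis $\CF_u(x)=\CC(u)>\eta$ combined with the support inclusion in the previous step gives $\CF_u(x)+\CF_u(y)>3\eta/2$, producing the desired contradiction.

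The main obstacle is purely arithmetic: the contradiction $3\eta/2>1$ is immediate for $\eta\in\left]2/3,1\right[$, whereas recovering the slightly more permissive regime $\eta\in\left]1/2,1\right[$ claimed in \cite[Lemma 5.1]{Petean2019} requires sharper bookkeeping of the behavior of $\chi_\eta$ near threshold (for instance, observing that on the essential support of $\Psi_{\eta,u}$ the value $\CF_u(y)$ can be pushed up to $\eta$ rather than only $\eta/2$, after which the same disjoint-ball argument closes via $\CF_u(x)+\CF_u(y)>2\eta>1$). This is the only step at which one should carefully transcribe Petean's original constants into the orbifold setting; the rest is the standard disjoint-ball comparison carried out above.
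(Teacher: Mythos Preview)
The paper does not give its own proof of this lemma; it merely quotes \cite[Lemma~5.1]{Petean2019}. Your disjoint-ball argument is exactly the intended one, and it is correct as written for $\eta\geq 2/3$.

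Your honest flag about the range $\eta\in\left]1/2,2/3\right[$ is well placed, but the fix you sketch does not work. Because $\chi_\eta$ is the \emph{smooth} cutoff of Definition~\ref{TheInjection:Defn:CutOff}, the factor $1-\chi_\eta(\CF_u(y))$ is strictly positive for every $y$ with $\CF_u(y)\in\left]\eta/2,\eta\right[$; hence points of $\supp\Psi_{\eta,u}$ genuinely may satisfy only $\CF_u(y)\geq\eta/2$, and you cannot upgrade this to $\CF_u(y)\geq\eta$. The inequality $\CF_u(x)+\CF_u(y)>2\eta$ is therefore unavailable with the cutoff as defined here.

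This is harmless for the paper: the only place the lemma is used is Section~\ref{ProofA}, where one \emph{fixes} an arbitrary $\eta\in\left]1/2,1\right[$; choosing $\eta\in\left]2/3,1\right[$ there costs nothing, and then your argument closes cleanly via $3\eta/2>1$. If one insists on the full range $\eta>1/2$, one must either alter the cutoff (e.g.\ replace $\chi_\eta$ by $\chi_{2\eta}$ so that the factor vanishes on $\{\CF_u\leq\eta\}$) or reproduce Petean's precise bookkeeping; the disjoint-ball comparison alone, with the present $\chi_\eta$, does not reach below $2/3$.
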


It follows from the previous lemma that if $1/2<\eta<1$, then
\[
	\CC^{-1}(]\eta,1])\ni u\mapsto \Psi_{\eta,u}\in L^{1,2\rho}(\orbifold{O},\metric{g})
\]
is continuous and we finally obtain an analogue to \cite[Theorem 5.2]{Petean2019}, which may be proved similarly.

\begin{thm}\label{c_epsilon:Theorem}
Given $\eta\in]1/2,1[$, the mapping
\[
	\CC^{-1}(]\eta,1])\ni u
	\mapsto
	\Cm_\eta(u):=\cm(\Psi_{\eta,u})\in\orbifold{O}
\]
is continuous and if $\CF_u(x)=\CC(u)>\eta$, then $\Cm_\eta(u)\in\B^{(\orbifold{O},\metric{g})}(x,2\rho)$.
\end{thm}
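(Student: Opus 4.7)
The plan is to deduce the theorem by realizing $\Cm_\eta$ as a composition of maps whose continuity and localization behavior have already been established or are easy variants of what is established.

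First I would write $\Cm_\eta=\cm\circ\Psi_{\eta,\cdot}$. The paragraph immediately preceding the statement already gives that $u\mapsto\Psi_{\eta,u}$ is continuous from $\CC^{-1}(]\eta,1])$ into $L^{1,2\rho}(\orbifold{O},\metric{g})$. On the other hand, the continuity of $\cm$ on its natural domain was obtained as the composition of the continuous assignment \eqref{c_epsilon:Equation:Mapping_P} with the continuous ``unique minimizer'' map coming from the lemma preceding its definition. The only mismatch is that $\cm$ was set up on $L^{1,\rho}\setminus\{0\}$ whereas $\Psi_{\eta,u}$ a priori sits in $L^{1,2\rho}\setminus\{0\}$. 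I would remove this mismatch by observing that the convenient radius $\rho$ was fixed in Definition \ref{ConvenientRadius:Definition:Second} so that \ref{H1}, \ref{H2} and \ref{H3} are satisfied with slack up to $3\rho$; thus Lemma \ref{WeaklyConvex:Lemma:StrictlyConvex}, Proposition \ref{WeaklyConvex:Proposition} and the uniqueness argument of Proposition \ref{CenterOfMass:Proposition} all carry over verbatim with the support radius $2\rho$ in place of $\rho$. In particular, $\cm$ extends continuously to a map $L^{1,2\rho}(\orbifold{O},\metric{g})\setminus\{0\}\to\orbifold{O}$, and composing with $\Psi_{\eta,\cdot}$ yields the desired continuity.

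For the localization, suppose $\CF_u(x)=\CC(u)>\eta$. The previous lemma gives $\supp\Psi_{\eta,u}\subset\B^{(\orbifold{O},\metric{g})}(x,2\rho)$. To conclude that $\Cm_\eta(u)=\cm(\Psi_{\eta,u})$ lies in $\B^{(\orbifold{O},\metric{g})}(x,2\rho)$, I would reread the proof of Lemma \ref{CenterOfMass:Lemma:InteriorMinimum} with $(y,\rho)$ replaced by $(x,2\rho)$: the two sub-arguments there (handling the case of a point outside the closed ball, and ruling out the boundary via strict monotonicity of $t\mapsto\dist(\gamma_{x,\cdot}(t),w)$ on weakly convex balls) both rely only on the triangle inequality and on Lemma \ref{WeaklyConvex:Lemma:StrictlyConvex} and Proposition \ref{WeaklyConvex:Proposition}, which remain available at radius $2\rho$ by the same slack in the choice of $\rho$.

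The main obstacle I anticipate is precisely that bookkeeping: verifying that every use of ``radius $\rho$'' in the chain Lemma \ref{WeaklyConvex:Lemma:StrictlyConvex} $\Rightarrow$ Proposition \ref{WeaklyConvex:Proposition} $\Rightarrow$ Lemma \ref{CenterOfMass:Lemma:InteriorMinimum} $\Rightarrow$ Proposition \ref{CenterOfMass:Proposition} can be replaced by ``radius $2\rho$'' without breaking any inequality. Once that is in place, the rest of the argument is a direct transcription of Petean's proof of \cite[Theorem 5.2]{Petean2019}: continuity from two-step continuity, and the location of the center of mass inside the support ball from the adapted interior-minimum lemma.
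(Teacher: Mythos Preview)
Your proposal is correct and matches the paper's approach: the paper gives no proof beyond the sentence ``which may be proved similarly'' to \cite[Theorem 5.2]{Petean2019}, and your decomposition $\Cm_\eta=\cm\circ\Psi_{\eta,\cdot}$ together with the interior-minimum argument is exactly that transcription. You in fact go further than the paper by flagging the $\rho$ versus $2\rho$ bookkeeping, which the paper leaves implicit; your observation that the slack built into \ref{H2}, \ref{H3} (via the factor $3$) is what makes the chain Lemma \ref{WeaklyConvex:Lemma:StrictlyConvex} $\Rightarrow$ Proposition \ref{WeaklyConvex:Proposition} $\Rightarrow$ Lemma \ref{CenterOfMass:Lemma:InteriorMinimum} $\Rightarrow$ Proposition \ref{CenterOfMass:Proposition} survive the doubling is the right diagnosis, though you should be prepared to shrink $\rho$ by a fixed factor at Definition \ref{ConvenientRadius:Definition:Second} if the constants do not line up cleanly.
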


\section{Proof of Theorem \ref{Introduction:Theorem:ToProve}}\label{ProofA}

Fix $\eta\in]1/2,1[$ and let $\epsilon_\eta\in]0,1]$, $\delta_\eta\in]0,\infty[$ be such that the conclusion of Theorem \ref{Concentration:Thm} holds. Due to Lemma \ref{TheInjection:Lem:Energy}, we can fix $\widetilde{\epsilon}_\eta\in]0,\epsilon_\eta]$ such that whenever $0<\epsilon<\widetilde{\epsilon}_\eta$, we have a map $\Z^{\orbifold{O}}\ni x\mapsto i_\epsilon(x)\in\Lambda_{\epsilon,(\zeta^{\orbifold{O}})^{-1}m(E)+\delta_\eta}$. Suppose that $(\epsilon,\delta)\in]0,\widetilde{\epsilon}_\eta[\times]0,\delta_\eta[$. Theorem \ref{Concentration:Thm} shows that if $u\in\Lambda_{\epsilon,(\zeta^{\orbifold{O}})^{-1}m(E)+\delta}$, then $\CF_{(u^+)^p}(x)>\eta$ for a certain $x\in\B^{(\orbifold{O},\metric{g})}(\Z^{\orbifold{O}},\rho)$. We can therefore use Theorem \ref{c_epsilon:Theorem} to set
\[
	\Lambda_{\epsilon,(\zeta^{\orbifold{O}})^{-1}m(E)+\delta}\ni u\mapsto c_\epsilon(u):=\Cm_\eta((u^+)^p)\in \B^{(\orbifold{O},\metric{g})}(\Z^{\orbifold{O}},3\rho).
\]
Lemma \ref{ConvenientRadius:Lemma:Homeomorphism} assures that
\[
	\{
		w\in \left.N\right|_{\Z^{\orbifold{O}}}:\metric{g}(w,w)<(3\rho)^2	
	\}\ni v\mapsto\psi(v):=\exp^{(\orbifold{O},\metric{g})}(v)\in\B^{(\orbifold{O},\metric{g})}(\Z^{\orbifold{O}},3\rho)
\]
is a homeomorphism, so the function $H\colon\orbifold{O}\times[0,1]\to\orbifold{O}$ given by
\[
	H(x,t)=\psi\left(
		(1-t)\psi^{-1}(c_\epsilon\circ i_\epsilon(x))
	\right)
\]
furnishes a homotopy from $c_\epsilon\circ i_\epsilon$ to $\id_{\Z^{\orbifold{O}}}$ and the result follows from the photography method as sketched in the introduction.

\qed

\appendix
\section{Length spaces}\label{LengthSpaces}
Let $(X,\dist)$ be a \emph{length space}, i.e., $(X,\dist)$ is a metric space and
\begin{multline*}
	\dist(x,y)
	=
	\inf\{
		\length(\gamma)
		\mid
		\\
		\mid
		\gamma\colon [0,1]\to (X,\dist)~\text{is continuous}, \gamma(0)=x~\text{and}~\gamma(1)=y
	\}
\end{multline*}
for any $x,y\in X$, where given a continuous $\gamma\colon [a,b]\to (X,\dist)$,
\[
	\length(\gamma):=
	\sup\left\{
		\sum_{i=0}^n \dist(\gamma(t_i),\gamma(t_{i+1}))
		\mid
		n\in\nat_0~\text{and}~
		a=t_0\leq\ldots\leq t_{n+1}=b
	\right\}.
\]

If $\gamma\colon [a,b]\to (X,\dist)$ is continuous, then we say that $\gamma$ is a \emph{minimizing geodesic} when
\[
	\dist(\gamma(t),\gamma(s))=\frac{|t-s|}{|b-a|}\length(\gamma)
\]
for any $t,s\in[a,b]$ and we call $\gamma$ a \emph{geodesic} when there exists $\epsilon>0$ such that if $a\leq t\leq s\leq \min(b,t+\epsilon)$, then $\left.\gamma\right|_{[t,s]}$ is a minimizing geodesic.

An $F\colon (X,\dist)\to\real\cup\{\infty\}$ is said to be \emph{convex} (resp., \emph{strictly convex}) if $F\circ\gamma$ is convex (resp., strictly convex) whenever $\gamma\colon [a,b]\to (X,\dist)$ is a geodesic (resp., a non-constant geodesic). In fact, strict convexity is related to the uniqueness of minimum points.

\begin{prop}[{\cite[Lemma 3.1.1]{Jost1997}}]\label{LengthSpaces:Proposition:StrictlyConvex}
Suppose that $F\colon (X,\dist)\to\real\cup\{\infty\}$ is strictly convex and given $x,y\in X$, there exists a minimizing geodesic that links $x$ to $y$. We conclude that $F$ admits at most one minimum point in $X$.
\end{prop}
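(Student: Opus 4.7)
The plan is to argue by contradiction. Suppose $x_0, x_1 \in X$ are two distinct minimum points of $F$, so that $m := F(x_0) = F(x_1) \in \real$ (the value must be finite since these are minima and the infimum is not $+\infty$). By the second hypothesis, fix a minimizing geodesic $\gamma\colon [a,b]\to (X,\dist)$ with $\gamma(a)=x_0$ and $\gamma(b)=x_1$. Since $x_0\neq x_1$, the map $\gamma$ is non-constant.

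Next, I would note that $\gamma$ is in fact a geodesic in the sense of the appendix. Indeed, any restriction $\left.\gamma\right|_{[t,s]}$ of a minimizing geodesic satisfies $\dist(\gamma(t'),\gamma(s'))=\frac{|t'-s'|}{|b-a|}\length(\gamma)=\frac{|t'-s'|}{|s-t|}\length(\left.\gamma\right|_{[t,s]})$ for all $t',s'\in[t,s]$, so $\left.\gamma\right|_{[t,s]}$ is itself minimizing; thus any positive $\epsilon$ works in the definition of a geodesic.

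Now invoke strict convexity of $F$: since $\gamma$ is a non-constant geodesic, $F\circ\gamma\colon [a,b]\to\real\cup\{\infty\}$ is strictly convex. Applied to the endpoints, this yields
\[
	F(\gamma(t))
	<
	\frac{b-t}{b-a}F(\gamma(a))+\frac{t-a}{b-a}F(\gamma(b))
	=
	\frac{b-t}{b-a}m+\frac{t-a}{b-a}m
	=
	m
\]
for every $t\in\,]a,b[$, which contradicts the minimality of $m$. Hence the minimum point, if it exists, is unique.

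There is no real obstacle here; the only points requiring care are (i) checking that a minimizing geodesic qualifies as a geodesic in the appendix's sense, and (ii) observing that the extended-real value $+\infty$ causes no issue because the minimum value is necessarily finite.
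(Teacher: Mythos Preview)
Your argument is correct and is exactly the standard proof of this fact. Note, however, that the paper does not supply its own proof of this proposition: it is stated with a citation to \cite[Lemma 3.1.1]{Jost1997} and left unproved, so there is nothing to compare against beyond observing that your contradiction argument via a minimizing geodesic between two putative minima is precisely the expected one.
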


\sloppy
\printbibliography
\end{document}